\newtheorem{theorem}{Theorem}[section]
\newtheorem{definitio}[theorem]{Definition}
\newtheorem{rem}[theorem]{Remark}
\newenvironment{remark}{\begin{rem} \rm }{\end{rem}}
\newtheorem{ex}[theorem]{Example}
\newenvironment{example}{\begin{ex} \rm }{\end{ex}}
\newtheorem{lemma}[theorem]{Lemma}
\newtheorem{proposition}[theorem]{Proposition}
\newtheorem{corollary}[theorem]{Corollary}
\newtheorem*{theo}{Theorem}
\newcommand{\fl}[1]{\textrm{\raisebox{-0.14cm}{\begin{tikzpicture}\draw (0.5,0) node{$\longrightarrow$}; \draw (0.5,0.2) node{$#1$};
  \end{tikzpicture}}}}
\newcommand{\Z}{\mathbb{Z}}
\newcommand{\Q}{\mathbb{Q}}
\newcommand{\Qt}{\mathbb{Q}[t^{\pm1}]}
\newcommand{\Int}{\mathrm{Int}}
\newcommand{\SL}{\mathrm{SL}}
\newcommand{\lk}{\mathrm{lk}}
\newcommand{\R}{\mathcal{R}}
\newcommand{\A}{\mathcal{A}}
\renewcommand{\L}{\mathcal{L}}
\newcommand{\torus}{\mathbb{T}}
\newcommand{\diff}{\mathop{}\!\mathrm{d}}
\newcommand{\neigh}{\operatorname{N}}
\newcommand{\circleGdown}{
\begin{scope} [scale=0.2]
\draw[blue] (0,0) circle (3);
\draw[blue,->] (-3,0.1) -- (-3,0);
 \draw[blue] (0,-3) node{$\star$};
\end{scope}}
\newcommand{\circleLf}[2]{
\begin{scope} [scale=0.2,xshift=#2*4cm]
\draw[white,line width=10pt] (-2.46,1.72) arc (145:475:3);
\draw (-2.46,1.72) arc (145:475:3);
\draw (0,3) node[above] {$#1$};
\end{scope}}
\newcommand{\circleLup}[3]{
\begin{scope} [scale=0.2,xshift=#2*4cm]
\draw[white,line width=10pt] (-2.46,1.72) arc (145:475:3);
\draw[#1] (-2.46,1.72) arc (145:475:3);
\draw[#1,->] (3,-0.1) -- (3,0);
 \draw[#1] (0,-3) node{#3};
\end{scope}}
\newcommand{\circleLdown}[3]{
\begin{scope} [scale=0.2,xshift=#2*4cm]
\draw[white,line width=10pt] (-2.46,1.72) arc (145:475:3);
\draw[#1] (-2.46,1.72) arc (145:475:3);
\draw[#1,->] (3,0.1) -- (3,0);
\draw[#1] (0,-3) node{#3};
\end{scope}}
\newcommand{\circleRf}[2]{
\begin{scope} [scale=0.2,xshift=#2*4cm]
\draw[white,line width=10pt] (-1.27,-2.72) arc (-115:215:3);
\draw (-1.27,-2.72) arc (-115:215:3);
\draw (0,3) node[above] {$#1$};
\end{scope}}
\newcommand{\circleRup}[2]{
\begin{scope} [scale=0.2,xshift=#2*4cm]
\draw[white,line width=10pt] (-1.27,-2.72) arc (-115:215:3);
\draw[#1] (-1.27,-2.72) arc (-115:215:3);
\draw[#1,->] (3,-0.1) -- (3,0);
\draw[#1] (0,-3) node{$\bullet$};
\end{scope}}
\newcommand{\cups}[2]{
\begin{tikzpicture} [scale=0.2]
 \draw (0,0) node[above] {$\scriptscriptstyle{#1}$} arc (-180:0:1) node[above] {$\scriptscriptstyle{#2}$};
\end{tikzpicture}}
\newcommand{\caps}[2]{
\raisebox{-2ex}{
\begin{tikzpicture} [scale=0.2]
 \draw (0,0) node[below] {$\scriptscriptstyle{#1}$} arc (+180:0:1) node[below] {$\scriptscriptstyle{#2}$};
\end{tikzpicture}}}
\newcommand{\tetacup}[2]{
\hspace{-1ex}
\raisebox{-0.8ex}{
\begin{tikzpicture} [scale=0.2]
 \draw (0,0) node[above] {$\scriptscriptstyle{#1}$} arc (-180:-90:1);
 \draw (3,-1) arc (-90:0:1) node[above] {$\scriptscriptstyle{#2}$};
 \draw (2,-1) circle (1);
\end{tikzpicture}}}
\newcommand{\wheel}{{
\hspace{-3pt}
\raisebox{2pt}{
\begin{tikzpicture} [scale=0.07]
 \draw[very thin] (-1,1) -- (1,-1) (-1,-1) -- (1,1);
 \draw[very thin,fill=white] (0,0) circle (0.7);
\end{tikzpicture}}}}
\newcommand{\annulus}{{
\hspace{-3pt}
\raisebox{1pt}{
\begin{tikzpicture} [scale=0.09]
 \draw[very thick,fill=white] (0,0) circle (0.7);
\end{tikzpicture}}}}
\newcommand{\teta}{\raisebox{-0.5ex}{
\begin{tikzpicture} [scale=0.2]
 \draw (0,0) circle (1);
 \draw (-1,0) -- (1,0);
\end{tikzpicture}}}
\newcommand{\tetatwo}{\raisebox{-0.6ex}{
\begin{tikzpicture} [scale=0.16]
 \draw (1,0) -- (1,1) arc (0:180:1) -- (-1,0) arc (-180:0:1);
 \draw (-1,0) -- (1,0) (-1,1) -- (1,1);
\end{tikzpicture}}}
\newcommand{\tetafour}{\raisebox{-1ex}{
\begin{tikzpicture} [scale=0.2,rotate=45]
 \draw (0,0) circle (1);
 \draw (1,0) -- (2,0);
 \draw (0,1) -- (0,2);
 \draw (-1,0) -- (-2,0);
 \draw (0,-1) -- (0,-2);
\end{tikzpicture}}}
\newcommand{\tetalegs}{\raisebox{-0.6ex}{
\begin{tikzpicture} [scale=0.2]
 \draw (0,0) circle (1);
 \draw (-1,0) -- (1,0);
 \foreach \t in {60,120} {\draw[rotate=\t] (1,0) -- (2,0);}
\end{tikzpicture}}}
\newcommand{\tetaleg}[1]{\raisebox{-0.6ex}{
\begin{tikzpicture} [scale=0.2]
 \draw (0,0) circle (1);
 \draw (-1,0) -- (1,0);
 \draw[rotate=60] (1,0) -- (2,0) node[above] {$\scriptscriptstyle{#1}$};
 \draw[rotate=120] (1,0) -- (2,0) node[above] {$\scriptscriptstyle{#1}$};
\end{tikzpicture}}}
\newcommand{\omegafour}[1]{\raisebox{-4ex}{
\begin{tikzpicture} [scale=0.2,rotate=45]
 \draw (0,0) circle (1);
 \draw (1,0) -- (2,0) node[above] {$\scriptscriptstyle{#1}$};
 \draw (0,1) -- (0,2) node[above] {$\scriptscriptstyle{#1}$};
 \draw (-1,0) -- (-2,0) node[below] {$\scriptscriptstyle{#1}$};
 \draw (0,-1) -- (0,-2) node[below] {$\scriptscriptstyle{#1}$};
\end{tikzpicture}}}
\newcommand{\dfour}{\raisebox{-0.5ex}{
\begin{tikzpicture} [scale=0.3]
 \foreach \t in {0,120,240} \draw[rotate=\t] (0,0) -- (0,1) -- (0.87,-0.5);
\end{tikzpicture}}}
\newcommand{\figstrut}{\raisebox{-0.07cm}{
\begin{tikzpicture} [xscale=0.18,yscale=0.15]
 \draw (-2,0) -- (0,0);
 \draw[->,thick,blue] (1,1) arc (90:450:1);
\end{tikzpicture}}}
\definecolor{purple}{rgb}{.7,0,.7}
\definecolor{bleu}{rgb}{0,0,.7}
\title{A splicing formula for the LMO invariant}
\author{Gw\'ena\"el Massuyeau}
\address{Institut de Math\'ematiques de Bourgogne, UMR 5584, CNRS, 
Universit\'e Bourgogne Franche-Comt\'e, 21000 Dijon, France}
\email{{gwenael.massuyeau@u-bourgogne.fr}}
\author{Delphine Moussard}
\address{Institut de Math\'ematiques de Marseille, UMR 7373, Universit\'e d'Aix--Marseille, Marseille, France}
\email{delphine.moussard@univ-amu.fr}
\thanks{This research has been funded by the project ``ITIQ-3D'' of the R\'egion Bourgogne Franche--Comt\'e.
 G.M. is partly supported by the project ``AlMaRe'' (ANR-19-CE40-0001-01)
 and by the EIPHI Graduate School (ANR-17-EURE-0002).}
\begin{document}

\begin{abstract}
We prove a ``splicing formula'' for the LMO invariant, which is the universal finite-type invariant of rational homology $3$--spheres.
Specifically, if a rational homology $3$--sphere $M$ is obtained 
by gluing the exteriors of two framed knots $K_1 \subset M_1$ and $K_2\subset M_2$ in  rational homology $3$--spheres,
our formula expresses the LMO invariant of~$M$ in terms of the Kontsevich--LMO invariants of $(M_1,K_1)$ and $(M_2,K_2)$.
The proof uses the techniques that Bar-Natan and Lawrence 
developed to obtain a rational surgery formula {for} the LMO invariant.
In low degrees, we recover Fujita's formula for the Casson--Walker invariant
and we observe that the second term of the Ohtsuki series is not additive under ``standard'' splicing.
The splicing formula also works when each $M_i$ comes with a link $L_i$ in addition to the knot~$K_i$, hence we get a ``satellite formula'' for the Kontsevich--LMO invariant.
\end{abstract}

\maketitle

\tableofcontents

\vspace{-0.5cm}

\section{Introduction}

The LMO invariant  of closed oriented $3$--manifolds was constructed by Le, Murakami and Ohtsuki \cite{LMO}: from the Kontsevich integral of a surgery link, they derived a quantity which is invariant under Kirby moves. 
According to Le \cite{Le}, the LMO invariant of  integral homology $3$--spheres is universal among all $\Q$-valued finite-type invariants in the sense of \cite{Ohtsuki,GGP,Habiro}. 
A more general result is true for rational homology $3$--spheres whose first homology groups have a fixed cardinality \cite{Mou2,Massuyeau}.
In the case of a rational homology $3$--sphere $M$, the LMO invariant of $M$ coincides with its Aarhus integral $Z(M)$ \cite{Aarhus1,Aarhus2}.
The invariant $Z(M)$ takes values in a graded vector space of trivalent diagrams, which is denoted by $\A(\emptyset)$.
Its first non-trivial term, the coefficient of the $\theta$--shaped diagram, is given by the Casson--Walker invariant $\lambda_{\operatorname{W}}(M)$ as normalized in \cite{Walker}:
$$
Z(M) = \emptyset + \frac{\lambda_{\operatorname{W}}(M)}{4} \teta
+ \Big(\!\begin{array}{c}\hbox{\footnotesize trivalent diagrams} \\ \hbox{\footnotesize with $\geq 4$ vertices} \end{array}\!\Big)
 \ \in \A(\emptyset).
$$

The \emph{splicing operation} is the general procedure by which, given two framed knots in oriented $3$--manifolds, 
one creates a new $3$--manifold by gluing the exteriors of these knots.
The term ``splicing'' is sometimes used in the literature for what is called below the \emph{standard} splicing, namely the case when the gluing homeomorphism identifies the meridian of each of the two knots with the parallel of the other one. Standard splicing has the property to preserve the class of integral homology $3$--spheres (a knot in such a manifold being framed  with its ``preferred'' parallel). Another special case of splicing is given by the rational surgery on a knot. 

The main result of this paper expresses the LMO invariant of a rational homology $3$--sphere that is defined as the splice of two framed knots in rational homology $3$--spheres, in terms of the Kontsevich--LMO invariants of the two knots. For simplicity, we will first consider the case of null-homologous~knots (which are framed with the ``preferred'' parallel).

\begin{theo} 
 Let $K_1 \subset M_1$ and $K_2 \subset M_2$ be null-homologous knots in rational homology $3${--}spheres. Consider a splice $M$ of $(M_1,K_1)$ and $(M_2,K_2)$ that is also a rational homology $3$--sphere. Then, we have 
 \begin{equation} \label{eq:main}
 Z(M)= \omega\, \exp\left(\frac{1}{48}\left({-S\left(\frac pr\right)+\frac{p+s}r}\right)\! \teta \right) \, \left\langle \partial_{p,r} \big(Z^\wheel(K_1)\big) \,  ,\, \partial_{s,r} \big(Z^\wheel(K_2)\big) \right\rangle_r
 \end{equation}
 where $Z^\wheel(K_i)$ is the ``wheeled'' version of the Kontsevich--LMO invariant of $K_i\subset M_i$, 
 the integers $p,r,s$ depend on the gluing homeomorphism, 
 the term $\omega \in \A(\emptyset)$ is a  constant, $S(p/r)$ is a Dedekind symbol 
 and $ \partial_{p,r}, \partial_{s,r}, \langle -,-\rangle_r$ are  diagrammatic operations depending  
 only on the indicated integers $p,r,s$. 
\end{theo}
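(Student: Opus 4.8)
The plan is to follow the strategy of Bar-Natan and Lawrence: realize the splice as a surgery presentation in $S^3$, apply the Aarhus integral, and extract \eqref{eq:main} through the wheeling isomorphism.

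First I would choose surgery presentations of the two pairs. Since each $M_i$ is a rational homology sphere and $K_i$ is null-homologous, I can present $M_i$ as surgery on a framed link in $S^3$ carrying $K_i$ as an extra, disjoint component equipped with its preferred parallel. The gluing homeomorphism $\phi$ of the two knot exteriors $E_i = M_i \setminus \Int \neigh(K_i)$ is determined, in the bases of meridians and preferred longitudes, by a matrix in $\SL_2(\Z)$ whose entries give the integers $p,r,s$; the rational homology sphere condition on $M$ amounts to the nonvanishing of the off-diagonal entry $r$. The case in which $(M_2,K_2)$ is the unknot in $S^3$—so that $E_2$ is a solid torus—is exactly the rational surgery treated by Bar-Natan and Lawrence, and the general splice is obtained by replacing that solid torus by $E_2$. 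Expanding the continued fraction of $p/r$ inserts a chain of $\pm1$-framed unknots between parallel copies of $K_1$ and $K_2$, producing a single framed link $\L \subset S^3$ on which surgery yields $M$. This step also fixes the arithmetic correction: the signature of the linking matrix of the chain and the Dedekind symbol $S(p/r)$ will be the source of the $\teta$-coefficient in \eqref{eq:main}.

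Next I would apply the Aarhus integral to $\L$. By functoriality of the Kontsevich integral its value factors as the product of the Kontsevich--LMO invariants of $K_1$ and $K_2$ with the elementary contributions of the surgery chain; after normalizing by one factor of $\nu$ per component I would perform the formal Gaussian integration that kills the surgery components. Since $M$ is a rational homology sphere the degree-one part of the integrand—the linking matrix of the chain—is invertible over $\Q$, so the integration is defined in the sense of Bar-Natan and Lawrence even though the covariance is non-integral. Integrating out the chain first leaves a Gaussian operator acting on the legs recording $K_1$ and $K_2$; after passing to wheeled invariants via the wheeling isomorphism, which converts the self-linking struts into wheels and absorbs the scalar anomaly into the universal constant $\omega$, this operator takes exactly the diagrammatic form of the leg-substitutions $\partial_{p,r}$, $\partial_{s,r}$ and the pairing $\langle -,-\rangle_r$, whose subscripts record the determinant datum $r$. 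Assembling this with the $\teta$-correction from the first step yields \eqref{eq:main}.

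I expect the main obstacle to be the rational Gaussian integration. Unlike the integral surgery case, the covariance matrix of the continued-fraction chain has non-integer entries governed by $p/r$, so one must both justify the formal integration with rational coefficients and, crucially, prove that the resulting operators and scalar corrections depend only on $p,r,s$ and not on the chosen continued-fraction expansion; this well-definedness is in the end a diagrammatic incarnation of Dedekind reciprocity. The second delicate point is to track the scalar anomaly exactly—both the constant $\omega$ and the precise coefficient of $\teta$, with its Dedekind symbol and signature term—through the wheeling isomorphism, since these are the very quantities that the low-degree check against Fujita's Casson--Walker formula must reproduce.
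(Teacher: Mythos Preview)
Your strategy is essentially the paper's: realize the splice as surgery along $K_1 \sqcup H \sqcup K_2$ with $H$ a Hopf chain encoding the gluing matrix, then apply the Aarhus/Gaussian machinery of Bar-Natan--Lawrence and pass to wheeled invariants. The principal difference is the ambient manifold. The paper works directly in $M_1 \sharp M_2$: Proposition~\ref{prop:splice} gives $M = (M_1 \sharp M_2)_{K_1 \sqcup H \sqcup K_2}$, and the connected-sum formula for $Z^\wheel$ \cite[Lemma~3.9]{BL}, together with the explicit value of $Z^\wheel$ on a Hopf chain \cite[Prop.~4.3]{BL}, immediately factors $Z^\wheel(K_1 \sqcup H \sqcup K_2)$ into $Z^\wheel(K_1)$, $Z^\wheel(K_2)$, and an explicit strut part. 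Your plan of pushing everything down to $S^3$ via surgery presentations of the $M_i$ would also work, but it forces you to integrate out the extra components presenting $M_1$ and $M_2$ and then \emph{recognize} the outcome as $Z^\wheel(M_i,K_i)$; the paper's route sidesteps this by invoking the surgery formula \cite[Eq.~(22)]{BL} in an arbitrary $\Q$HS rather than only in $S^3$.

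Two smaller points where your outline drifts from what actually happens. First, the chain $H$ carries arbitrary integer framings $a_1,\dots,a_n$ coming from a decomposition of the full matrix $\left(\begin{smallmatrix} p & r \\ q & s \end{smallmatrix}\right)$ into elementary factors (Lemma~\ref{lemmaSL}), not a continued fraction of $p/r$ with $\pm1$-framed components. Second, the ``main obstacle'' you anticipate --- independence of the formula from the chosen expansion, and rational Gaussian integration --- is not really where the work lies. The linking matrix $\Lambda$ is an honest integral tridiagonal matrix, so the Gaussian integration is the standard one; the only rational data are the four corners of $\Lambda^{-1}$, and these are read off directly from $p,r,s$ via Proposition~\ref{prop:BL}. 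Likewise the $\teta$-coefficient $3\zeta(\Lambda)-\operatorname{tr}(\Lambda)$ is computed in closed form from $p,r,s$ by the Kirby--Melvin formula (Theorem~\ref{th:KM}), so independence from the expansion is automatic and no separate Dedekind-reciprocity argument is needed.
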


The reader is referred to Theorem \ref{thFormuleSplicing} for a precise statement of this ``splicing formula''. 
Although the LMO invariant does not separate rational homology $3$--spheres \cite{BL}, 
it is still unknown whether it separates integral homology $3$--spheres; 
nevertheless, Bar-Natan and Lawrence proved that it does separate Seifert fibered spaces that are integral homology $3$--spheres. 
They also computed the LMO invariant of all lens spaces. 
We expect that the splicing formula will be useful to compute (to some extent) the LMO invariant of new families of rational homology $3$--spheres.
We  particularly think of graph manifolds (within this homology type)
since they are obtained from Seifert fibered spaces by repeated
splicings along their fibers. 
A~possible outcome of this study would be to decide whether the LMO invariant also distinguishes 
graph manifolds that are integral homology $3$--spheres.

\

The paper is organized as follows.
In Section \ref{sec:spl_sur}, we describe the splice of two framed knots 
in closed oriented $3$--manifolds --- say $K_1 \subset M_1$ and $K_2 \subset M_2$ ---
as the result of the surgery along a framed link $K_1 \sqcup H \sqcup {K_2}$ in the connected sum $M_1\sharp M_2$, 
where $H$ is a ``chain'' of Hopf links ``clasping'' $K_1$ and $K_2$.
This surgery description of the splicing operation is well-known in, at least, two special cases:
for standard splicings and for rational surgeries.
Even in its most general form, the reader could deduce it from \cite[\S 5.2]{Gordon} by determining the handle decomposition of the plumbed $4$--manifold that is constructed there. 
Here the surgery description of a splice is proved by purely $3$--dimensional arguments.

In Section \ref{sec:Z(splice)}, we prove the above theorem.
Starting with the surgery description of a splice,
we use the techniques that Bar-Natan and Lawrence developed to produce a rational surgery formula for the LMO invariant \cite{BL};
their work relies itself on the ``Wheels and Wheeling'' conjectures proved in \cite{BLT}.
The rational surgery formula is reproved here as a consequence of the splicing formula (Corollary \ref{cor:BL}).

In Section \ref{sec:low_degree}, the splicing formula is made explicit in low degrees.
In degree two, we recover Fujita's splicing formula \cite{Fujita}
for the Casson--Walker invariant $\lambda_{\operatorname{W}}$, which involves the second derivatives of the Alexander polynomial of knots 
(Proposition \ref{PropFujita}).
In degree {four}, we obtain a splicing formula
for the second term $\lambda_{2}$ of the Ohtsuki series which, in addition to the second and fourth derivatives of
the Alexander polynomial, needs the third coefficient of the expansion of the Jones polynomial (Proposition \ref{PropDegFour}).
It turns out that, in contrast with~$\lambda_{\operatorname{W}}$, 
the invariant~$\lambda_{2}$ is not additive with respect to standard splicing.

The last two sections provide  more general versions of the splicing formula, 
which will be needed for the above-mentioned project of studying the LMO invariant of graph manifolds.
In Section~\ref{sec:links}, we consider the situation 
where each $M_i$  in the above theorem 
comes with an additional link~$L_i$ (disjoint from $K_i$).
Then, the Kontsevich--LMO invariant of $L_1 \sqcup L_2 \subset M$ is expressed by the same formula \eqref{eq:main} in terms of the Kontsevich--LMO invariants of $K_1\sqcup L_1 \subset M_1$ and $K_2 \sqcup L_2 \subset M_2$ (Theorem~\ref{th:links}).
As explained in the monograph  \cite{EN}, splicing is a fundamental operation in knot theory:
indeed, splicing subsumes all satellite operations (such as connected sum, cabling, Whitehead doubling, etc).
Thus, we give a ``satellite formula'' for the Kontsevich--LMO invariant (Corollary \ref{cor:satellite})
and we derive from this a result of Suetsugu~\cite{Suetsugu}.
Finally, we prove in Section \ref{sec:non-null} a generalization of the above theorem where each knot $K_i$ is allowed to be non-trivial in homology.
Then formula~\eqref{eq:main} extends by also taking into account the self-linking numbers of $K_1$ and $K_2$ (Theorem \ref{th:non-null}).

The paper ends with an appendix which collects useful facts about tridiagonal matrices and their signatures in terms of Dedekind sums.

\

\noindent
\textbf{Conventions.}
Throughout the paper, all manifolds are assumed to be compact, connected and oriented. 
The boundary of a manifold (if any) is oriented with the ``outward normal first'' convention. 

Unless otherwise stated, all knots are oriented. Given a framed knot $K$ in a 3--manifold $M$, the parallel $\rho(K)$ of $K$ (defining the framing) inherits from $K$ an orientation and the meridian $\mu(K)$ of $K$ is oriented so that $\lk(K,\mu(K))=+1$. 
Given a framed link $L$ in a 3--manifold $M$, the manifold obtained from $M$ by surgery on $L$ is denoted $M_L$.

\section{Splicing and surgery} \label{sec:spl_sur}

Let $M_1$ and $M_2$ be closed 3--manifolds, and 
let $K_1\subset M_1$ and $K_2\subset M_2$ be knots. 
For $i=1,2$, let $\neigh(K_i)$ be a tubular neighborhood of $K_i$ and set $X_i:=M_i\setminus\Int(\neigh(K_i))$.  
Given an orientation-reversing homeomorphism 
$$f:\partial\neigh(K_1)\fl{\scriptstyle{\cong}}\partial\neigh(K_2),$$ 
define the {\em splice} of $(M_1,K_1)$ and $(M_2,K_2)$ along $f$ as the closed 3--manifold 
\begin{equation} \label{eq:M}
M:= X_1\, \mathop{\bigcup}_{f}\, X_2 .
\end{equation}
The goal of this section is to describe $M$ as a surgery along a framed link in the connected sum $M_1\sharp M_2$. 

\begin{remark}
There is a more general ``self-splicing'' operation, which is defined as follows: 
given a closed (possibly disconnected) $3$--manifold $M_0$, two disjoint knots
$K_1,K_2 \subset M_0$, and an orientation-reversing homeomorphism 
$f:\partial\neigh(K_1)\to \partial\neigh(K_2),$
the \emph{self-splice} $M$ of $(M_0,K_1,K_2)$ is the result of self-gluing $M_0\setminus \Int(\neigh(K_1) \cup \neigh(K_2))$ using $f$.
Let $M_i$ be the connected component of $M_0$ containing $K_i$ for $i=1,2$, and assume for simplicity that $M_0=M_1 \cup M_2$.
If $M_1\neq M_2$, then $M$ is the splice of $(M_1,K_1)$ and $(M_2,K_2)$.
If $M_1 =M_2$, then $M$ is not a rational homology $3$--sphere even if $M_0$ is assumed to be so.
Since we are only interested in rational homology $3$--spheres in this paper, we will not further consider self-splices.
\end{remark}

Fix a ``model'' $\torus$ of the $2$--dimensional torus, together with a basis $(\alpha,\beta)$ of its fundamental group. The surface $\torus$ is oriented so that the intersection number $\alpha \cdot \beta$ is $+1$.
If some identifications $\torus\cong - \partial\neigh(K_1)$ and $\torus\cong\partial\neigh(K_2)$ are fixed, 
then the splice \eqref{eq:M} can be achieved by gluing the mapping cylinder of an orientation-preserving self-homeomorphism  of $\torus$ to $X_1\sqcup X_2$. We will use this point of view to get a surgery description of the splicing operation.

To this aim, we study {\em toroidal cobordisms}, namely 3--manifolds $C$ whose boundary $\partial C= (-\partial_-C)\sqcup \partial_+C $ is decomposed into two parts, $\partial_+ C$ and $\partial_- C$, each being identified to  $\torus$.
The \emph{composition} $D \circ C$ of two toroidal cobordisms, $C$ and $D$, is defined by gluing
$C$ to $D$ using the given identifications $\partial_+C \cong \torus \cong \partial_- D$.
There is a one-to-one correspondence between pairs $(M,J)$, where $J$ is a $2$--component framed link in a closed $3$--manifold $M$,
and toroidal cobordisms $C$. 
Specifically, the cobordism corresponding to a $2$--component framed link $J=J'\sqcup J''$ in a closed 3--manifold $M$ is its exterior 
$$
C := M\setminus\left(\Int(\neigh(J'))\sqcup\Int(\neigh(J''))\right)
$$ 
where $\partial_-C :=  \partial\neigh(J')$ is identified to $\torus$ by $(\mu(J'), \rho(J')) \mapsto (\alpha,\beta) $ and $\partial_+ C :=  - \partial\neigh(J'')$ is identified to $\torus$ by $(\mu(J''), \rho(J'')) \mapsto (\alpha,-\beta)$.
Here we use the conventions that have been set at the very end of the introduction.

In this way, toroidal cobordisms can be presented by a surgery link in the complement of the trivial $2$--component framed link in $S^3$.
For instance, Figure \ref{figExCob} represents $M$ as the 3--manifold obtained by surgery on the black framed unoriented link, while $J'$ and $J''$ are the copies in $M$ of the blue (\textcolor{blue}{$\star$}--marked) knot and the red (\textcolor{red}{$\bullet$}--marked) knot, respectively; each $k\in \Z$ decorating a black (unmarked) component is here to mean that we are performing a surgery with a framing number that differs by $k$ from the ``blackboard framing''. 
In the sequel, we shall use these diagrammatic conventions  for surgery presentations.
\begin{figure}[htb] 
\begin{center}
\begin{tikzpicture} [scale=0.6]
\newcommand{\extg}[1]{\draw[rotate=#1] (0,-1) .. controls +(1.2,-0.9) and +(1.3,-0.7) .. (0.97,0.45);}
\newcommand{\inted}[1]{\draw[rotate=#1] (-0.84,0.38) .. controls +(0.15,-0.6) and +(-0.5,0.5) .. (0,-1);}
\extg{0} \extg{120} \extg{240} \inted{0} \inted{120} \inted{240} 
\draw (-1.2,1) node {$3$};
\begin{scope} [xshift=2.5cm,yshift=-2cm,scale=0.33]
\draw (0.2,0.2) .. controls +(1,1) and +(1,-1) .. (0,5) .. controls +(-1,1) and +(0,-1) .. (-2,7.7) 
  (-2,8.3) .. controls +(0,2) and +(0,3) .. (2,8) .. controls +(0,-1) and +(0.7,1) .. (0.1,5.2)
  (-0.15,4.85) .. controls +(-0.7,-1) and +(-1,1) .. (0,0) .. controls +(4,-4) and +(4,0) .. (2.3,8) 
  (1.7,8) -- (-2,8);
  \draw[white,line width=6pt] (-2,8) .. controls +(-2,0) and +(-0.5,2) .. (-4,3);
  \draw (-2,8) .. controls +(-2,0) and +(-0.5,2) .. (-4,3) (-3.9,2.4) .. controls +(0.5,-2) and +(-2,-2) .. (-0.2,-0.2);
  \draw (3,10) node {$-2$};
\end{scope}
\draw[white,line width=7pt] (-1.7,-0.1) arc (70:410:1);
\draw[blue] (-1.7,-0.1) arc (70:410:1);
\draw[blue,->] (-2,-0.05) -- (-2.1,-0.05);
\draw[blue] (-2,-2.05) node{$\star$};
\draw[white,line width=7pt] (3.7,0) arc (110:450:1);
\draw[red] (3.7,0) arc (110:450:1);
\draw[red,->] (5.05,-1) -- (5.05,-0.9);
\draw[red] (4.1,-1.95) node{$\bullet$};
\end{tikzpicture}
\end{center}
\caption{Surgery presentation of a toroidal cobordism} \label{figExCob}
\end{figure}

\begin{lemma} \label{lemmaTrivialCob}
 The trivial toroidal cobordism is $I=$\raisebox{-0.4cm}{
\begin{tikzpicture} [scale=0.5]
 \draw[blue] (0,0) circle (1);
 \draw[blue,->] (-1,0.1) -- (-1,0);
 \draw[blue] (0,-1) node{$\star$};
 \draw[red] (4,0) circle (1);
 \draw[red,->] (5,-0.1) -- (5,0);
 \draw[red] (4,-1) node{$\bullet$};
 \draw[white,line width=7pt] (0.8,-0.3) -- (3.2,-0.3);
 \draw (0.8,0.3) arc (90:270:0.3) -- (3.2,-0.3) arc (-90:90:0.3);
 \draw (1.2,0.3) -- (2.8,0.3);
 \draw (2,0.3) node[above] {0};
\end{tikzpicture}} . 
\end{lemma}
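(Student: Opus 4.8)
The plan is to read off from the diagram that $I$ is the exterior, inside the closed manifold $M:=(S^3)_U$, of the two-component link $J=J'\sqcup J''$ given by the blue ($\star$-marked) and red ($\bullet$-marked) circles, where $U$ denotes the black $0$-framed unknot clasping them. The key feature of the picture is that $U$ bounds an evident Seifert disk $D$ which is pierced exactly once by $J'$ and once by $J''$: indeed each of them has linking number $\pm 1$ with $U$ and linking number $0$ with the other. First I would therefore analyse the surgery on $U$ alone and track what happens to $J'$ and $J''$.

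Since $U$ is a $0$-framed unknot, surgery on it produces $M\cong S^2\times S^1$; concretely, the belt sphere of the surgery is the $2$-sphere $\Sigma=D\cup D'$ obtained by capping $D$ off with the meridian disk $D'$ of the reglued solid torus, and under a suitable identification $\Sigma=S^2\times\{pt\}$ while the $S^1$-factor is the meridian direction of $U$. A curve that links $U$ once is isotopic to the core of the complementary solid torus $S^3\setminus\neigh(U)$, i.e. to a meridian of $U$, and this core becomes a fibre $\{p\}\times S^1$ of $S^2\times S^1$. Applying this to $J'$ and $J''$, which pierce $D$ at two distinct points $p_1,p_2$, I get that $J'\sqcup J''$ is carried to the two fibres $\{p_1\}\times S^1$ and $\{p_2\}\times S^1$. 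Consequently
$$I=M\setminus\Int\big(\neigh(J'\sqcup J'')\big)=\big(S^2\setminus(D_1\sqcup D_2)\big)\times S^1=\mathbb{A}\times S^1,$$
where $\mathbb{A}$ is an annulus; writing $\mathbb{A}=[0,1]\times S^1$ exhibits $I$ as a product $\torus\times[0,1]$, hence as the trivial toroidal cobordism at the level of the underlying manifold.

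It then remains to match the boundary data against the conventions fixed just before the lemma. Here I would check that the meridian $\mu(J')$ is the curve $\partial D_1\times\{pt\}$ and that the parallel $\rho(J')$ is the fibre direction $\{pt\}\times S^1$, and likewise for $J''$, so that $(\mu,\rho)$ is carried to $(\alpha,\beta)$ at each end. The only point needing a genuine argument is that the $0$-framing $\rho(J')$ read from the diagram coincides with the product (fibre) framing: a small $0$-pushoff of $J'$ has its framing annulus disjoint from $\neigh(U)$ and still links $U$ once, so it survives the surgery as a nearby fibre, which is exactly the product framing; thus the twisting coefficient $k$ in $\rho(J')=\rho_{\mathrm{prod}}+k\,\mu(J')$ vanishes. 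Finally I would track orientations: the prescribed orientations of the blue and red circles, together with the two sign conventions built into the correspondence — the reversal $\partial_+C=-\partial\neigh(J'')$ and the rule $(\mu(J''),\rho(J''))\mapsto(\alpha,-\beta)$ — are precisely what force the product structure to induce the \emph{same} identification with $\torus$ at both ends, i.e. the identity self-homeomorphism, rather than a reflection or a power of a Dehn twist. The main obstacle is exactly this last bookkeeping: confirming that $k=0$ and that all the signs conspire to yield the identity cobordism rather than a twisted one, while the identification of the ambient manifold and of $I$ as $\torus\times[0,1]$ is routine.
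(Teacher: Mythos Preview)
Your proposal is correct and follows essentially the same approach as the paper: identify the surgered manifold as $S^2\times S^1$, recognise $J'$ and $J''$ as two fibres $\{*\}\times S^1$, and deduce that the exterior is a thickened torus. The only cosmetic difference is in the boundary check: the paper verifies the identifications more directly by sliding $\mu(J')$ over the surgery disk to obtain $\mu(J'')$ and observing that $\rho(J')$ is homotopic to $-\rho(J'')$, whereas you argue via the product structure and framing compatibility; both accomplish the same thing.
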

\begin{proof}
 The manifold obtained by surgery is $S^2\times S^1$; the knots $J'$ and $J''$ that correspond to the blue (\textcolor{blue}{$\star$}--marked) component and to the red (\textcolor{red}{$\bullet$}--marked) component, respectively, are of the form $\{*\}\times S^1$. Hence the cobordism is an annulus times $S^1$, namely a thickened torus. 
 Moreover, the meridian $\mu(J')$ can be slid over the disk glued by surgery to get $\mu(J'')$. 
 Finally, $\rho(J')$ is clearly homotopic to $-\rho(J'')$.
\end{proof}

\begin{lemma} \label{lemmaCompCob}
 Let $C$ and $D$ be toroidal cobordisms, which are respectively associated to pairs $(M,J)$ and $(N,L)$ of framed links in 3--manifolds. 
 Then the composed cobordism $D \circ C$ is associated to the pair
 $\big((M\sharp N)_{J''\sharp L'},J'\sqcup L'' \big).$
\end{lemma}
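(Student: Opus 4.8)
The plan is to stay entirely in the three--dimensional setting and to identify the glued manifold $D\circ C$ by hand, using the surgery presentations only for bookkeeping. First I would make the gluing homeomorphism explicit. By definition $\partial_+C$ is $-\partial\neigh(J'')$ identified with $\torus$ via $(\mu(J''),\rho(J''))\mapsto(\alpha,-\beta)$, while $\partial_-D$ is $\partial\neigh(L')$ identified with $\torus$ via $(\mu(L'),\rho(L'))\mapsto(\alpha,\beta)$. Composing these, the gluing map $\phi\colon\partial_+C\to\partial_-D$ sends $\mu(J'')\mapsto\mu(L')$ and $\rho(J'')\mapsto-\rho(L')$, and is orientation--reversing on the torus, as it must be. Since the tori $\partial\neigh(J')$ and $\partial\neigh(L'')$ are untouched by the gluing, they furnish $\partial_-(D\circ C)$ and $\partial_+(D\circ C)$ together with their framings; so the statement reduces to identifying the manifold obtained by gluing the two knot exteriors $E_1:=M\setminus\Int\neigh(J'')$ and $E_2:=N\setminus\Int\neigh(L')$ along $\phi$ with $(M\sharp N)_{J''\sharp L'}$.

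The heart of the argument is a local ``full--torus gluing $=$ connected sum $+$ surgery'' statement, which I would prove by splitting each gluing torus into two annuli. On $\partial\neigh(J'')$ choose an annular neighbourhood $A_1$ of the meridian $\mu(J'')$ and let $A_1^c$ be the complementary annulus (whose core is again isotopic to $\mu(J'')$); define $A_2,A_2^c\subset\partial\neigh(L')$ similarly. Since $\phi(\mu(J''))=\mu(L')$, I may arrange $\phi(A_1)=A_2$ and $\phi(A_1^c)=A_2^c$. Gluing only along the meridional annuli $A_1\to A_2$ realizes the connected sum of $(M,J'')$ and $(N,L')$: the identified annulus $A_1=A_2$ becomes the connected--sum annulus, the two arcs of $J''$ and $L'$ join into $J''\sharp L'$, and the result is the exterior $(M\sharp N)\setminus\Int\neigh(J''\sharp L')$, with boundary torus $A_1^c\cup_\partial A_2^c$. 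I would confirm this orientation--sensitive identification on the model where $J'',L'$ are unknots in $S^3$ (both exteriors being solid tori, the meridional--annulus gluing visibly produces the unknot exterior).

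It then remains to glue $A_1^c$ to $A_2^c$ by $\phi$, which folds the boundary torus $A_1^c\cup_\partial A_2^c$ onto itself. I would check that this self--identification is exactly a Dehn filling: the product $A_1^c\times[0,1]$ is a solid torus attached along $\partial\neigh(J''\sharp L')$, its core running parallel to $\mu(J''\sharp L')$, so its meridian disk meets the boundary in the transverse slope. To pin down that slope I would track the parallel through the decomposition; the sign in $\phi(\rho(J''))=-\rho(L')$, combined with the orientation--reversal of $\phi$, is precisely what makes the parallels close up into a single curve, namely the band sum of $\rho(J'')$ and $\rho(L')$, i.e.\ $\rho(J''\sharp L')$ (equivalently, the self--linking numbers add). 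A short computation in the basis $(\mu,\rho)$ — transparent in the unknot model, where unknots of framings $a$ and $b$ glue to $L(a+b,1)$, matching $(a+b)$--surgery on the unknot — shows the filling slope is the surgery slope $\rho(J''\sharp L')$. Combining the two annular gluings identifies $E_1\cup_\phi E_2$ with $(M\sharp N)_{J''\sharp L'}$, and reinstating the untouched tori $\partial\neigh(J')$ and $\partial\neigh(L'')$ yields the pair $\big((M\sharp N)_{J''\sharp L'},\,J'\sqcup L''\big)$.

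The main obstacle is the local statement of the last two paragraphs: proving rigorously that the full--torus gluing splits as a connected sum along the meridional annuli followed by a Dehn filling along the complementary annuli, and that the filling slope is the connected--sum framing, with all orientations tracked correctly. An alternative, more combinatorial route would avoid the annulus analysis altogether: insert the trivial cobordism of Lemma~\ref{lemmaTrivialCob} between the surgery presentations of $C$ and $D$ and simplify by Kirby moves, the $0$--framed connector of that trivial cobordism being exactly what merges the red component of $C$ and the blue component of $D$ into the surgery knot $J''\sharp L'$, leaving $J'$ and $L''$ as the marked components.
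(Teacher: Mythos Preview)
Your argument is correct, but it takes a substantially different and more laborious route than the paper's proof. The paper works in the opposite direction: rather than starting from the glued cobordism $D\circ C$ and decomposing the torus gluing into two annular steps, it starts from the \emph{target} pair $\big((M\sharp N)_{J''\sharp L'},J'\sqcup L''\big)$ and observes that the associated cobordism visibly decomposes as
\[
D \circ \big(\neigh(J'')\,\sharp\,\neigh(L')\big)_{J''\sharp L'} \circ C,
\]
simply by performing the connected sum inside the tubular neighbourhoods $\neigh(J'')$ and $\neigh(L')$. The middle piece is the toroidal cobordism obtained from the connected sum of two solid tori, surgered along the connected sum of their cores; this is immediately recognized as the trivial cobordism (for instance via Lemma~\ref{lemmaTrivialCob}), and the result follows.

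Your annulus decomposition (meridional annulus $\Rightarrow$ connected sum, complementary annulus $\Rightarrow$ Dehn filling along the connected-sum framing) is a genuine and correct proof of the same local statement, and it has the merit of being entirely self-contained and constructive. But the paper's approach is shorter precisely because it isolates the local model $\big(\neigh(J'')\sharp\neigh(L')\big)_{J''\sharp L'}$ as a single toroidal cobordism and checks its triviality in one stroke, rather than tracking orientations through a two-step gluing. Interestingly, your closing ``alternative route'' --- inserting the trivial cobordism of Lemma~\ref{lemmaTrivialCob} between the two presentations --- is essentially the paper's idea read in reverse: the paper shows that the middle piece \emph{is} that trivial cobordism, whereas you propose to insert it and simplify.
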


\begin{proof}
 Perform the connected-sum inside the tubular neighborhoods $\neigh(J'')$ and $\neigh(L')$. 
 Then the cobordism associated to $\big((M\sharp N)_{J''\sharp L'},J'\sqcup L''\big)$ is 
 $$C'\circ \big(\neigh(J'')\, \sharp\, \neigh(L')\big)_{J''\sharp L'}\circ C.$$ 
 It is easily checked that $\big(\neigh(J'')\sharp\neigh(L')\big)_{J''\sharp L'}$ is the trivial cobordism
 (e.g., using Lemma \ref{lemmaTrivialCob}).
\end{proof}

For any $a \in \Z$, we introduce the following toroidal cobordisms: 
\begin{equation} \label{eq:RL}
\R_a=\raisebox{-4ex}{
      \begin{tikzpicture}
       \circleGdown
       \circleRf{0}{1}
       \circleRf{a}{2}
       \circleRup{red}{3}
      \end{tikzpicture}}
  \qquad
  \L_a=\raisebox{-4ex}{
      \begin{tikzpicture}
       \circleGdown
       \circleLf{a}{1}
       \circleLf{0}{2}
       \circleLup{red}{3}{$\bullet$}
      \end{tikzpicture}}
\end{equation}
Let us compute the composition $\L_{-a}\circ \R_{a}$ using Lemma \ref{lemmaCompCob}: 
\begin{align*}
 \L_{-a}\circ \R_{a} &=\raisebox{-4ex}{
 \begin{tikzpicture}
  \circleGdown
  \circleRf 0 1
  \circleRf a 2
  \circleRf 0 3
  \circleLf{-a}4
  \circleLf 0 5
  \circleLup{red}{6}{$\bullet$}
 \end{tikzpicture}}\\
 &=\raisebox{-4ex}{
 \begin{tikzpicture}
  \circleGdown
  \circleRf 0 1
  \circleRf 0 2
  \circleLf 0 3
  \circleLup{red}{4}{$\bullet$}
 \end{tikzpicture}} 
 \ = \ \raisebox{-4ex}{
 \begin{tikzpicture}
  \circleGdown
  \circleRf 0 1
  \circleLup{red}{2}{$\bullet$}
 \end{tikzpicture}} \ = \ I
\end{align*}
The second and third equalities are given by slam-dunk moves as represented in the right hand side of Figure~\ref{figSlamDunk} (see \cite{CG88} for the left hand side and deduce the right hand side by sliding first the rightmost component on the leftmost one). 
We deduce that the cobordisms $\R_a$ and $\L_a$ are invertible, which implies that they are mapping cylinders. We focus on the $\L_a$ and compute their action in homology.
\begin{figure}[htb]
\begin{center}
\begin{tikzpicture} [scale=0.2]
 \draw[dashed] (-5,3) -- (-3,3) (-5,-3) -- (-3,-3);
 \draw (0,0) ellipse (2 and 1);
 \draw[white,line width=8pt] (-3,3) arc (90:-10:3);
 \draw (-3,3) arc (90:-10:3);
 \draw (-3,-3) arc (-90:-30:3);
 \draw (2,-1.5) node {$\scriptstyle{0}$} (5,0) node {$\sim$} (8,0) node {$\emptyset$};
\begin{scope} [xshift=24cm]
 \draw[dashed] (-6,3) -- (-4,3) (-6,-3) -- (-4,-3);
 \draw[dashed] (6,3) -- (4,3) (6,-3) -- (4,-3);
 \draw (0,0) ellipse (2 and 1);
 \draw[white,line width=8pt] (-4,3) arc (90:-10:3);
 \draw (-4,3) arc (90:-10:3);
 \draw (-4,-3) arc (-90:-30:3);
 \draw[white,line width=8pt] (4,3) arc (90:190:3);
 \draw (4,3) arc (90:190:3);
 \draw (4,-3) arc (-90:-150:3);
 \draw (0,-1) node[below] {$\scriptstyle{0}$};
\end{scope}
 \draw (33,0) node {$\sim$};
\begin{scope} [xshift=42cm]
 \draw[dashed] (-6,3) -- (-4,3) (-6,-3) -- (-4,-3);
 \draw[dashed] (6,3) -- (4,3) (6,-3) -- (4,-3);
 \draw (-4,3) .. controls +(2,0) and +(-1,0) .. (0,1) .. controls +(1,0) and +(-2,0) .. (4,3);
 \draw (-4,-3) .. controls +(2,0) and +(-1,0) .. (0,-1) .. controls +(1,0) and +(-2,0) .. (4,-3);
\end{scope}
\end{tikzpicture} \caption{A slam-dunk move and a corollary} \label{figSlamDunk}
\end{center}
\end{figure}
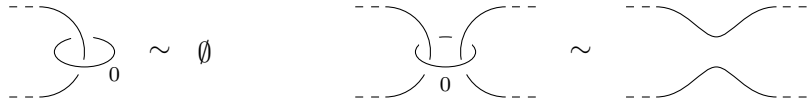

\begin{lemma} \label{lemmaCobActionHom}
 The action of the cobordism $\L_a$ on the homology of $\torus$ is given by the matrix $\begin{pmatrix} a & -1 \\ 1 & 0 \end{pmatrix}$ in the basis $(\alpha,\beta)$ of $H_1(\torus;\Z)$.
\end{lemma}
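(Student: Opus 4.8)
The plan is to compute $H_1$ of the cobordism $\L_a$ directly from its surgery presentation and to read off the action from the images of its two boundary tori. First I would use the correspondence set up in Section~\ref{sec:spl_sur} to view $\L_a$ as the exterior, inside the closed manifold $(S^3)_{A\sqcup B}$, of the $2$--component framed link $J'\sqcup J''$; here $A$ (framing $a$) and $B$ (framing $0$) are the two unmarked components, and $J'-A-B-J''$ is a chain in which consecutive components clasp. Since $\L_a$ was just shown to be a mapping cylinder, both inclusions $\partial_\pm\L_a\hookrightarrow\L_a$ induce isomorphisms on $H_1$, and the sought action is $i_+^{-1}\circ i_-$ read through the fixed identifications $\partial_-\cong\torus$, $(\mu(J'),\rho(J'))\mapsto(\alpha,\beta)$ and $\partial_+\cong\torus$, $(\mu(J''),\rho(J''))\mapsto(\alpha,-\beta)$.

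Next I would present $H_1(\L_a)$ by generators and relations. Writing $m_0,m_1,m_2,m_3$ for the meridians of $J',A,B,J''$, the group $H_1$ of the exterior of the four--component chain is free on these classes, and the $0$--framed longitudes are $\rho(J')=\varepsilon_1 m_1$, $\rho(A)=\varepsilon_1 m_0+\varepsilon_2 m_2$, $\rho(B)=\varepsilon_2 m_1+\varepsilon_3 m_3$ and $\rho(J'')=\varepsilon_3 m_2$, where the $\varepsilon_i=\pm1$ are the linking numbers $\lk$ of consecutive components. Dehn filling along $A$ and $B$ imposes the relations $a\,m_1+\rho(A)=0$ and $\rho(B)=0$; solving them expresses $m_2$ and $m_3$ in terms of $m_0,m_1$ with unit coefficients, so that $H_1(\L_a)\cong\Z^2$ is free on $m_0,m_1$ --- as it must be for a thickened torus.

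Finally I would substitute into the boundary data: $i_-(\alpha)=m_0$ and $i_-(\beta)=\varepsilon_1 m_1$, while $i_+(\alpha)=m_3$ and $i_+(\beta)=-\rho(J'')=-\varepsilon_3 m_2$. Expressing the first pair in terms of the second yields $i_+^{-1}i_-(\alpha)=\varepsilon(a\alpha+\beta)$ and $i_+^{-1}i_-(\beta)=-\varepsilon\alpha$, where $\varepsilon:=\varepsilon_1\varepsilon_2\varepsilon_3$, i.e. the matrix $\varepsilon\begin{pmatrix} a & -1 \\ 1 & 0 \end{pmatrix}$. The remaining point is to check that the orientation conventions of the figure force $\varepsilon=+1$, which simultaneously makes the determinant $+1$ as required for an orientation--preserving self--homeomorphism of $\torus$.

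The main obstacle is the bookkeeping of signs: the orientations chosen for the chain components, the clasping signs entering the $\varepsilon_i$, the framing convention relating the decoration $a$ to the actual surgery slope, and the direction ($\partial_-\to\partial_+$ versus its inverse) in which the cobordism acts. Once these are pinned down consistently with the identifications of $\partial_\pm\L_a$ with $\torus$, the computation above gives the stated matrix. As a sanity check, the factorization $\begin{pmatrix} a & -1 \\ 1 & 0 \end{pmatrix}=\begin{pmatrix} 1 & a \\ 0 & 1 \end{pmatrix}\begin{pmatrix} 0 & -1 \\ 1 & 0 \end{pmatrix}$ matches the expected contributions of the $a$--framed twisting component and of the $0$--framed swapping component, and is compatible, via Lemma~\ref{lemmaCompCob}, with the relation $\L_{-a}\circ\R_a=I$ established above.
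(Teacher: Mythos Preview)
Your approach is essentially the paper's own: present $H_1(\L_a)$ by the four meridians subject to the two surgery relations, then express $\mu(J'),\rho(J')$ in terms of $\mu(J''),\rho(J'')$; the paper simply orients all four circles counter-clockwise from the outset rather than carrying the $\varepsilon_i$. One caveat: your determinant check cannot resolve the sign, since $\det\!\big(\varepsilon\begin{smallmatrix} a & -1 \\ 1 & 0 \end{smallmatrix}\big)=\varepsilon^2=1$ regardless, so $\varepsilon=+1$ must indeed be read off the figure's orientation conventions as you say.
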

\begin{proof}
 Denote by $L_a$ and $L_0$ the surgery components of $\L_a$ in \eqref{eq:RL}
 framed by $a$ and~$0$ respectively, and orient each of them in the counter-clockwise direction. 
 We have the following identities in $H_1(\L_a;\Z)$: 
 \begin{align*}
  \rho(J')=\mu(L_a) \qquad \rho(L_a)=\mu(J')+\mu(L_0) & \qquad \rho(L_a)+a\,\mu(L_a)=0 \\
  \rho(J'')= \mu(L_0) \qquad \rho(L_0)=\mu(L_a)+\mu(J'') & \qquad \rho(L_0)=0
 \end{align*}
 This gives $\mu(J')=a\,\mu(J'')-\rho(J'')$ and $\rho(J')=-\mu(J'')$, and we can conclude. 
\end{proof}

\begin{lemma} \label{lem:Hopf_chain} 
 For any integers $a_1,\dots,a_n$, we have
 $$\L_0\circ \L_{a_n}\circ\dots\circ \L_{a_1}=\left\lbrace\begin{array}{ll}
        \raisebox{-4ex}{
        \begin{tikzpicture} [scale=0.8]
         \circleGdown
         \circleLf{a_1}1
         \circleLf{}2
         \circleLf{}5
         \circleLf{a_{n-1}}6
         \circleLf{a_n}7
         \circleLdown{red}{8}{$\bullet$}
         \draw[white,fill=white] (1.6,-0.62) -- (4,-0.62) -- (4,0.62) -- (1.6,0.62) -- (1.6,-0.62);
         \draw (2.8,0) node {$\dots$};
        \end{tikzpicture}}& \textrm{if }n\textrm{ is even,} \\
        \raisebox{-4ex}{
        \begin{tikzpicture} [scale=0.8]
         \circleGdown
         \circleLf{a_1}1
         \circleLf{}2
         \circleLf{}5
         \circleLf{a_{n-1}}6
         \circleLf{a_n}7
         \circleLup{red}{8}{$\bullet$}
         \draw[white,fill=white] (1.6,-0.62) -- (4,-0.62) -- (4,0.62) -- (1.6,0.62) -- (1.6,-0.62);
         \draw (2.8,0) node {$\dots$};
        \end{tikzpicture}}& \textrm{if }n\textrm{ is odd.} 
 \end{array}\right.$$
\end{lemma}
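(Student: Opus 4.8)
The plan is to argue by induction on $n$, computing each successive composition with Lemma~\ref{lemmaCompCob} and then simplifying the resulting surgery diagram by slam-dunk moves, exactly as in the model computation $\L_{-a}\circ\R_a=I$ performed just above. The basic mechanism to isolate first is the effect of appending a single factor: by Lemma~\ref{lemmaCompCob}, composing a cobordism presented by a Hopf chain with one more $\L_{a}$ yields the superposition of the two chains together with a single extra $0$--framed ``junction'' circle, namely the connected sum $J''\sharp L'$. Between the last framing of the old chain and the first circle of $\L_a$ one then sees two consecutive $0$--framed circles (the old trailing $0$ and this junction); each is clasped by exactly its two neighbours, so the ``corollary'' slam-dunk of Figure~\ref{figSlamDunk} removes it and connects its neighbours directly. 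The net effect of one composition is therefore to lengthen the chain by exactly one circle carrying the new framing, the superfluous $0$--framed circles cancelling in pairs just as the $a$-- and $(-a)$--framed circles did in $\L_{-a}\circ\R_a$.

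Concretely, I would prove the auxiliary statement that $\L_{a_n}\circ\cdots\circ\L_{a_1}$ is a Hopf chain whose circles carry the framings $a_1,\dots,a_n,0$ (one trailing $0$ surviving from the outermost factor) and whose terminal clasp on the red ($\bullet$--marked) copy of $J''$ faces up or down according to the parity of $n$. The case $n=1$ is the definition~\eqref{eq:RL} of $\L_{a_1}$, and the inductive step is precisely the single-composition computation above. Composing finally on the left with $\L_0$ contributes two more $0$--framed circles which, together with the new junction and the surviving trailing $0$, form four consecutive $0$--framed circles after $a_n$; slam-dunking all of them connects the $a_n$--circle directly to the red component and leaves exactly the framings $a_1,\dots,a_n$, reproducing the two pictures in the statement. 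Thus the leading $\L_0$ is there precisely to absorb the trailing $0$--framed circle.

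Throughout, one must carry the orientations and framings using the conventions fixed before Lemma~\ref{lemmaCobActionHom}: the blue ($\star$--marked) copy of $J'$ and the red copy of $J''$ are the boundary tori $\partial_-$ and $\partial_+$, and their parallels are reversed by the gluing, as in the proof of Lemma~\ref{lemmaCobActionHom}. The one genuinely delicate point, and the main obstacle, is the parity bookkeeping: each slam-dunk removing a $0$--framed circle reverses the sense in which the next circle clasps its successor, so the orientation of the terminal clasp on the red component alternates as the chain is assembled. Keeping exact track of this alternation is what distinguishes the even case ($\bullet$ below) from the odd case ($\bullet$ above). I would therefore verify the small cases $n=1$ and $n=2$ explicitly to fix the convention, checking consistency against the homological action $\begin{pmatrix} a & -1 \\ 1 & 0\end{pmatrix}$ of Lemma~\ref{lemmaCobActionHom}, and then propagate the alternation through the induction.
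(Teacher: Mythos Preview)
Your approach is essentially the same as the paper's: induction on $n$, using Lemma~\ref{lemmaCompCob} to compose and slam-dunks to simplify, with the auxiliary statement that $\L_{a_n}\circ\cdots\circ\L_{a_1}$ is the Hopf chain with framings $a_1,\dots,a_n,0$. The paper's only difference is in the final step: rather than composing once more with $\L_0$ and slam-dunking four consecutive $0$'s as you propose, it simply specializes the auxiliary formula at $a_{n+1}=0$ (getting framings $a_1,\dots,a_n,0,0$) and applies a single slam-dunk, which is a little cleaner but amounts to the same thing.

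One small correction worth flagging: the orientation flip in the paper's model computation of $\L_a\circ\L_b$ comes from the explicit isotopy step (second equality), not from the slam-dunk itself. So your heuristic that ``each slam-dunk reverses the sense of the next clasp'' is not quite the right mechanism, though the alternation you predict is correct. Since you already plan to verify $n=1,2$ by hand and check against Lemma~\ref{lemmaCobActionHom}, this would sort itself out; just be aware that the flip is an isotopy of the red component, not a feature of the slam-dunk move in Figure~\ref{figSlamDunk}.
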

\begin{proof}
 For any integers $a$ and $b$, we have 
 \begin{align*}
 \L_a\circ\L_b & =\raisebox{-3ex}{
 \begin{tikzpicture} [scale=0.8]
   \circleGdown
   \circleLf b 1
   \circleLf 0 2
   \circleLf 0 3
   \circleLf a 4
   \circleLf 0 5
   \circleLup{red}{6}{$\bullet$}
 \end{tikzpicture}}\\
 &=\raisebox{-3ex}{
 \begin{tikzpicture} [scale=0.8]
   \circleGdown
   \circleLf b 1
   \circleLf 0 2
   \circleLf 0 3
   \circleRf a 4
   \circleLf 0 5
   \circleLdown{red}{6}{$\bullet$}
 \end{tikzpicture}} \ = \ \raisebox{-3ex}{
 \begin{tikzpicture} [scale=0.8]
   \circleGdown
   \circleLf b 1
   \circleLf a 2
   \circleLf 0 3
   \circleLdown{red}{4}{$\bullet$}
 \end{tikzpicture}}
 \end{align*}
 where the first equality is given by Lemma \ref{lemmaCompCob}, the second one by an isotopy 
 and the third one by slam-dunk. 
 For any integers $a_1,\dots,a_n, a_{n+1}$, a similar computation gives by induction:
 $$\L_{a_{n+1}}\circ \L_{a_{n}}\circ\dots\circ \L_{a_1}=\left\lbrace\begin{array}{ll}
        \raisebox{-4ex}{
        \begin{tikzpicture} [scale=0.8]
         \circleGdown
         \circleLf{a_1}1
         \circleLf{}2
         \circleLf{}5
         \circleLf{a_{n+1}}6
         \circleLf 0 7
         \circleLdown{red}{8}{$\bullet$}
         \draw[white,fill=white] (1.6,-0.62) -- (4,-0.62) -- (4,0.62) -- (1.6,0.62) -- (1.6,-0.62);
         \draw (2.8,0) node {$\dots$};
        \end{tikzpicture}}& \textrm{if }n\textrm{ is odd.} \\
        \raisebox{-4ex}{
        \begin{tikzpicture} [scale=0.8]
         \circleGdown
         \circleLf{a_1}1
         \circleLf{}2
         \circleLf{}5
         \circleLf{a_{n+1}}6
         \circleLf 0 7
         \circleLup{red}{8}{$\bullet$}
         \draw[white,fill=white] (1.6,-0.62) -- (4,-0.62) -- (4,0.62) -- (1.6,0.62) -- (1.6,-0.62);
         \draw (2.8,0) node {$\dots$};
        \end{tikzpicture}}& \textrm{if }n\textrm{ is even.} 
 \end{array}\right.$$
 Setting $a_{n+1}:=0$ and applying a slam-dunk move leads to the result.
\end{proof}

We now come back to the splice $M$ of $(M_1,K_1)$ and $(M_2,K_2)$ defined by an orientation-reversing homeomorphism $f:\partial\neigh(K_1) \to \partial\neigh(K_2)$ and we assume  that $K_1,K_2$ are framed. Using the framing, we make the following identifications:
\begin{eqnarray} \label{eq:tori}
-\partial X_2 = \partial\neigh(K_2) \stackrel{\cong}{\longrightarrow} \torus, &&\quad (\mu(K_2), \rho(K_2)) \longmapsto  (\alpha,\beta) \\
\notag \partial X_1= - \partial\neigh(K_1) \stackrel{\cong}{\longrightarrow} \torus,&& \quad (\mu(K_1), \rho(K_1))  \longmapsto  (\alpha,-\beta)
\end{eqnarray}
Thanks to these identifications, we view $f$ as an orientation-preserving self-homeomorphism of~$\torus$,
and we consider the matrix  giving the action of $f$ on $H_1(\torus;\Z)$:
\begin{equation} \label{eq:matrix_f}
\begin{pmatrix} p & r \\ q & s \end{pmatrix}
\quad \hbox{where} \quad 
\left\{\begin{array}{l} f_*(\alpha) = p\, \alpha + q\, \beta\\
f_*(\beta) = r\, \alpha + s\, \beta
\end{array}\right.
\end{equation} 
The following lemma gives a decomposition of this matrix. See \cite[Lemma~4]{Gordon} for a proof. 

\begin{lemma} \label{lemmaSL}
 Any matrix in $\SL_2(\Z)$ is a product of matrices $\begin{pmatrix} a & -1 \\ 1 & 0 \end{pmatrix}$ with $a\in\Z$.
\end{lemma}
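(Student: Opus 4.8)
The plan is to prove this by a Euclidean-algorithm reduction, viewing the matrices $V_a:=\begin{pmatrix} a & -1 \\ 1 & 0\end{pmatrix}$ (note $\det V_a=1$, so $V_a\in\SL_2(\Z)$) as the elementary moves. First I would record two base computations. Taking $a=0$ gives $V_0=\begin{pmatrix} 0 & -1 \\ 1 & 0\end{pmatrix}$, whence $V_0^2=-I$, $V_0^3=\begin{pmatrix} 0 & 1 \\ -1 & 0\end{pmatrix}$ and $V_0^4=I$. A direct multiplication then yields, for every $v\in\Z$, the identity $V_v\,V_0^3=\begin{pmatrix} 1 & v \\ 0 & 1\end{pmatrix}$. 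Combining these shows that all the matrices $\begin{pmatrix} 1 & v \\ 0 & 1\end{pmatrix}=V_v V_0^3$ and $\begin{pmatrix} -1 & v \\ 0 & -1\end{pmatrix}=V_0^2\,V_{-v}\,V_0^3$ are products of the $V_a$; these will serve as the terminal cases of the reduction.

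Next I would set up the reduction. Given $A=\begin{pmatrix} p & r \\ q & s\end{pmatrix}\in\SL_2(\Z)$, I compute $V_a^{-1}A=\begin{pmatrix} 0 & 1 \\ -1 & a\end{pmatrix}A=\begin{pmatrix} q & s \\ aq-p & ar-s\end{pmatrix}$, so left multiplication by $V_a^{-1}$ replaces the left column $(p,q)^{\mathsf T}$ by $(q,\,aq-p)^{\mathsf T}$. When $q\neq 0$, Euclidean division of $p$ by $q$ furnishes an integer $a$ with $|aq-p|<|q|$, so the absolute value of the lower-left entry strictly decreases. Iterating and inducting on $|q|$, after finitely many steps I reach a matrix $B=V_{a_k}^{-1}\cdots V_{a_1}^{-1}A$ whose lower-left entry is $0$.

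Finally I would conclude. Such a $B=\begin{pmatrix} u & v \\ 0 & w\end{pmatrix}$ has $uw=\det B=1$ with $u,w\in\Z$, so $u=w=\pm 1$ and $B$ is one of the two terminal forms handled in the first paragraph; hence $B$ is a product of $V_a$'s. Since $A=V_{a_1}\cdots V_{a_k}\,B$, the matrix $A$ is itself a product of matrices $V_a$, as claimed. (Equivalently, this argument reproves that the monoid generated by the $V_a$ is all of $\SL_2(\Z)$, the expansion being nothing but a negative continued fraction expansion of $p/q$.)

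The only delicate point I expect is the bookkeeping of signs. The reduction naturally produces the \emph{inverses} $V_a^{-1}$, so one must either invert the word back to the $V_a$ at the end (as above) or check directly that each $V_a^{-1}$ is a product of $V_b$'s; likewise the $\pm 1$ ambiguity in the terminal diagonal must be absorbed using $-I=V_0^2$. This is precisely where the $-1$ entries of $V_a$ (as opposed to the $+1$ of the ordinary continued-fraction matrices) intervene, and where the non-uniqueness of the expansion is harmless: any valid choice of quotients $a_i$ terminates and yields a legitimate factorization.
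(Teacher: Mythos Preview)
Your argument is correct. The paper does not actually prove this lemma; it simply refers the reader to \cite[Lemma~4]{Gordon}, so there is no ``paper's own proof'' to compare against. Your Euclidean-algorithm reduction on the lower-left entry, together with the explicit handling of the upper-triangular terminal cases via $V_0^4=I$ and $V_vV_0^3=\begin{pmatrix}1&v\\0&1\end{pmatrix}$, is exactly the standard route and would be the content of the cited reference.

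One harmless typo: in your computation of $V_a^{-1}A$ the lower-right entry should read $as-r$, not $ar-s$. Since your reduction only tracks the left column $(p,q)^{\mathsf T}\mapsto(q,aq-p)^{\mathsf T}$, this does not affect the argument.
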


%
%

Applying Lemma \ref{lemmaSL} to $\begin{pmatrix} 0 & 1 \\ -1 & 0 \end{pmatrix}\begin{pmatrix} p & r \\ q & s \end{pmatrix}$, we get some integers $a_1,\dots,a_n$ such that
\begin{equation} \label{eq:decomposition_f}
\begin{pmatrix} p & r \\ q & s \end{pmatrix}=
  \begin{pmatrix} 0 & -1 \\ 1 & 0 \end{pmatrix}
  \begin{pmatrix} a_n & -1 \\ 1 & 0 \end{pmatrix} \cdots
  \begin{pmatrix} a_1 & -1 \\ 1 & 0 \end{pmatrix}.
\end{equation}

\begin{proposition} \label{prop:splice}
 The splice $M$ of $(M_1,K_1)$ and $(M_2,K_2)$ defined by a homeomorphism $f$, whose matrix \eqref{eq:matrix_f} in homology
 is decomposed as in \eqref{eq:decomposition_f}, is the surgered manifold
 $$ (M_1\sharp M_2)_{K_1\sqcup H\sqcup K_2}$$ where $H:=H(a_1,\dots,a_n)$ is the ``chain'' of Hopf links ``clasping'' $K_1$ and $K_2$ as shown below: 
 $$\left\lbrace\begin{array}{ll}
        \raisebox{-6ex}{
        \begin{tikzpicture} [scale=0.8]
         \draw[blue,dashed,scale=0.2] (-3,3) -- (0,3) (-3,-3) -- (0,-3);
         \draw[blue,scale=0.2] (0,-3) node[below] {$K_1$};
         \draw[blue,->,scale=0.2] (0.1,3) -- (0,3) node[above] {$\scriptstyle{0}$};
         \draw[blue,scale=0.2] (0,-3) arc (-90:90:3);
         \circleLf{a_1}1
         \circleLf{}2
         \circleLf{}5
         \circleLf{a_{n-1}}6
         \circleLf{a_n}7
         \circleLdown{red}{8}{}
         \draw[white,fill=white] (1.6,-0.62) -- (4,-0.62) -- (4,0.62) -- (1.6,0.62) -- (1.6,-0.62);
         \draw (2.8,0) node {$\dots$};
         \draw[white,fill=white,scale=0.2] (32,-3) -- (35.4,-3) -- (35.4,3) -- (32,3) -- (32,-3);
         \draw[red,scale=0.2,dashed] (32,-3) -- (35,-3) (32,3) -- (35,3);
         \draw[red,->,scale=0.2] (31.9,3) -- (32,3) node[above] {$\scriptstyle{0}$};
         \draw[red,scale=0.2] (32,-3) node[below] {$K_2$};
         \draw (2.8,-0.62) node[below] {$H$};
        \end{tikzpicture}}& \textrm{if }n\textrm{ is even} \\
        \raisebox{-6ex}{
        \begin{tikzpicture} [scale=0.8]
         \draw[blue,dashed,scale=0.2] (-3,3) -- (0,3) (-3,-3) -- (0,-3);
         \draw[blue,scale=0.2] (0,-3) node[below] {$K_1$};
         \draw[blue,->,scale=0.2] (0.1,3) -- (0,3) node[above] {$\scriptstyle{0}$};
         \draw[blue,scale=0.2] (0,-3) arc (-90:90:3);
         \circleLf{a_1}1
         \circleLf{}2
         \circleLf{}5
         \circleLf{a_{n-1}}6
         \circleLf{a_n}7
         \circleLup{red}{8}{}
         \draw[white,fill=white] (1.6,-0.62) -- (4,-0.62) -- (4,0.62) -- (1.6,0.62) -- (1.6,-0.62);
         \draw (2.8,0) node {$\dots$};
         \draw[white,fill=white,scale=0.2] (32,-3) -- (35.4,-3) -- (35.4,3) -- (32,3) -- (32,-3);
         \draw[red,scale=0.2,dashed] (32,-3) -- (35,-3) (32,3) -- (35,3);
         \draw[red,->,scale=0.2] (31.9,-3) -- (32,-3);
         \draw[red,scale=0.2] (32,3) node[above] {$\scriptstyle{0}$};
         \draw[red,scale=0.2] (32,-3) node[below] {$K_2$};
         \draw (2.8,-0.62) node[below] {$H$};
        \end{tikzpicture}}& \textrm{if }n\textrm{ is odd} 
 \end{array}\right..$$
\end{proposition}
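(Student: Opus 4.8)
The plan is to assemble Proposition \ref{prop:splice} from the machinery already in place: the dictionary between toroidal cobordisms and $2$--component framed links, the identification of the $\L_a$ as mapping cylinders (Lemma \ref{lemmaCobActionHom}), the matrix decomposition \eqref{eq:decomposition_f}, and the explicit computation of compositions in Lemma \ref{lem:Hopf_chain}. The core observation is that the splice \eqref{eq:M} is, under the identifications \eqref{eq:tori}, obtained by gluing the mapping cylinder of $f$ between $X_1$ and $X_2$; so I need only realize that mapping cylinder as the composition of the $\L_{a_i}$ dictated by \eqref{eq:decomposition_f}, and then translate the resulting toroidal cobordism back into a surgery link in $M_1 \sharp M_2$.

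First I would recall that by Lemma \ref{lemmaCobActionHom} the cobordism $\L_a$ realizes the matrix $\left(\begin{smallmatrix} a & -1 \\ 1 & 0 \end{smallmatrix}\right)$ on $H_1(\torus)$, and that a mapping cylinder is determined up to the relevant equivalence by its action in homology (a cobordism between tori being a mapping cylinder exactly when it is invertible, as noted after Lemma \ref{lem:Hopf_chain}). Hence the composition $\L_0 \circ \L_{a_n} \circ \cdots \circ \L_{a_1}$ acts as the product
$$
\begin{pmatrix} 0 & -1 \\ 1 & 0 \end{pmatrix}
\begin{pmatrix} a_n & -1 \\ 1 & 0 \end{pmatrix} \cdots
\begin{pmatrix} a_1 & -1 \\ 1 & 0 \end{pmatrix}
= \begin{pmatrix} p & r \\ q & s \end{pmatrix},
$$
which is precisely the matrix \eqref{eq:matrix_f} of $f$. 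The extra $\L_0$ factor accounts for the discrepancy between $\left(\begin{smallmatrix} 0 & -1 \\ 1 & 0 \end{smallmatrix}\right)$ in \eqref{eq:decomposition_f} and the leading factor $\left(\begin{smallmatrix} 0 & 1 \\ -1 & 0 \end{smallmatrix}\right)$ coming from Lemma \ref{lemmaSL}, together with the sign convention $(\mu(K_1),\rho(K_1)) \mapsto (\alpha,-\beta)$ built into \eqref{eq:tori}. Thus the mapping cylinder of $f$ equals the toroidal cobordism $\L_0 \circ \L_{a_n} \circ \cdots \circ \L_{a_1}$, whose surgery presentation is exactly the chain $H$ of Lemma \ref{lem:Hopf_chain}.

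It then remains to glue this cobordism to $X_1$ and $X_2$ and read off the total surgery link. Viewing $X_i$ itself as the exterior of $K_i \subset M_i$, I attach $\partial_-$ of the chain cobordism to $\partial X_1$ and $\partial_+$ to $\partial X_2$ via the identifications \eqref{eq:tori}; the repeated application of Lemma \ref{lemmaCompCob} turns these gluings into connected sums along the relevant tubular neighborhoods, so that $K_1$ and $K_2$ become the two ends of the Hopf chain $H(a_1,\dots,a_n)$ sitting in $M_1 \sharp M_2$, each framed by $0$ (the preferred parallel, matching the blue and red components of Lemma \ref{lem:Hopf_chain}). The parity case distinction in the statement is inherited directly from the two cases of Lemma \ref{lem:Hopf_chain}, according to whether the terminal red component carries an ``up'' or ``down'' arrow. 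This yields $M = (M_1 \sharp M_2)_{K_1 \sqcup H \sqcup K_2}$, as claimed.

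The main obstacle I anticipate is bookkeeping of orientations and framings through the two-sided gluing: one must verify that the identifications \eqref{eq:tori}, with their asymmetric signs on $\rho(K_1)$ versus $\rho(K_2)$, are compatible with the conventions $(\mu(J'),\rho(J')) \mapsto (\alpha,\beta)$ and $(\mu(J''),\rho(J'')) \mapsto (\alpha,-\beta)$ used to set up the cobordism dictionary, so that $K_1$ and $K_2$ really do appear as $J'$ of the first factor and $J''$ of the last. Getting these signs exactly right is what forces the particular form of the leading $\left(\begin{smallmatrix} 0 & -1 \\ 1 & 0 \end{smallmatrix}\right)$ in \eqref{eq:decomposition_f}, and it is where I would spend the most care; the homological computations and slam-dunk reductions themselves are routine given the preceding lemmas.
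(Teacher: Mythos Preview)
Your proposal is correct and follows essentially the same approach as the paper: identify the mapping cylinder $C_f$ with $\L_0\circ\L_{a_n}\circ\cdots\circ\L_{a_1}$ via Lemma~\ref{lemmaCobActionHom} and the fact that invertible toroidal cobordisms are determined by their action on $H_1(\torus)$, then write $M=X_2\circ C_f\circ X_1$ and apply Lemmas~\ref{lemmaCompCob} and~\ref{lem:Hopf_chain}. Your discussion of orientation bookkeeping is more explicit than the paper's (which compresses the argument into a few lines), but the underlying strategy is identical; note only that the leading $\L_0$ corresponds directly to the matrix $\left(\begin{smallmatrix}0&-1\\1&0\end{smallmatrix}\right)$ in~\eqref{eq:decomposition_f} via Lemma~\ref{lemmaCobActionHom}, so no separate ``discrepancy'' needs resolving there.
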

\begin{proof}
 Let $C_f$ be the mapping cylinder of $f$, which we view as a toroidal cobordism.
 Since $C_f$ is determined by its action on homology, Lemma \ref{lemmaCobActionHom} implies 
 $$
 C_f=\L_0\circ\L_{a_n}\circ\dots\circ\L_{a_1}.
 $$ 
 Viewing $M$ as a cobordism $\emptyset \to \emptyset$, $X_1$ as a cobordism $\emptyset \to \torus$ 
 and $X_2$ as a cobordism $\torus \to \emptyset$, we have $M=X_2\circ C_f\circ X_1$. 
 Hence we conclude with Lemma \ref{lemmaCompCob} and Lemma~\ref{lem:Hopf_chain}.
\end{proof}

\section{Splicing formula} \label{sec:Z(splice)}

In the sequel, we mainly follow the notations of \cite{BL} which we recall in part. 
Let $N$ be a rational homology $3$--sphere (in short, a \emph{$\Q$HS})
and let $L\subset N$ be a framed knot.
We denote~by
$$
Z(N,L) \in \A(\circlearrowleft_l)
$$
the Kontsevich--LMO invariant of the pair $(N,L)$. 
Here the ``abstract'' oriented $1$--manifold~$\circlearrowleft$ is labeled with the letter $l$
in order to refer to the unique connected component of $L$: this convention (lower-to-upper case letters)
will be used throughout the text to label connected components.
In the absence of knot, $Z(N) := Z(N,\emptyset)$ is the LMO invariant of $N$ which is denoted by
$\hat Z^{\operatorname{LMO}}(N)  \in \A(\emptyset)$ in \cite{BL}\footnote{This corresponds to the notation $\hat \Omega(M)$ in \cite{LMO}
and to $\ring{A}(M)$ in \cite{Aarhus1}.}.
In the case of the standard $3$--sphere,  $Z(L) := Z(S^3,L)$ is the Kontsevich integral of $L$
in the version that is also denoted by $Z(L)\in \A(\circlearrowleft_l)$ in \cite{BL}.
Even if $N \neq S^3$, $Z(N,L)$ will often be abbreviated to $Z(L)$ when $N$ is clear from the context.

Let $\chi: \A(*_X) \to \A(\uparrow_X)$ be the diagrammatic analogue of the PBW isomorphism, which is defined for any finite set $X$. 
Denote by
$$
\Omega:= \chi^{-1}Z(\operatorname{unknot}) \in \A(*)
$$
the ``symmetrized'' value of the unknot.
The latter has been found in \cite{BLT} to be equal to
\begin{equation} \label{eq:Omega}
\Omega = \exp\Big(\sum_{m\geq 1} b_{2m}\, \omega_{2m} \Big)
\end{equation}
where $\omega_{2m}$ denotes the Jacobi diagram consisting of one ``wheel''  with $2m$ ``spokes'' (as depicted in Figure~\ref{figomega}), and the {\em modified Bernoulli numbers}
\begin{equation} \label{valuesb}
 b_2=\frac 1{48}, \quad b_4=-\frac 1{5760}, \quad \hbox{etc,}
\end{equation}
are defined by the formal power series
$$ 
\sum_{m\geq 1} b_{2m}X^{2m}: = \frac{1}{2} \log\Big(\frac{\sinh(X/2)}{X/2} \Big) \in \Q[[X]].
$$
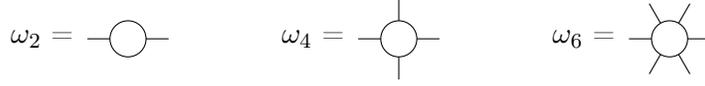
\begin{figure} [htb]
\begin{center}
\begin{tikzpicture} [scale=0.3]
 \draw (-3.8,0) node {$\omega_2$\ =};
 \foreach \t in {0,180} {\draw [rotate=\t] (0.8,0) -- (1.8,0);}
 \draw (0,0) circle (0.8);
\begin{scope} [xshift=12cm]
 \draw (-3.8,0) node {$\omega_4$\ =};
 \foreach \t in {0,90,180,270} {\draw [rotate=\t] (0.8,0) -- (1.8,0);}
 \draw (0,0) circle (0.8);
\end{scope}
\begin{scope} [xshift=24cm]
 \draw (-3.8,0) node {$\omega_6$\ =};
 \foreach \t in {0,60,...,300} {\draw [rotate=\t] (0.8,0) -- (1.8,0);}
 \draw (0,0) circle (0.8);
\end{scope}
\end{tikzpicture} \caption{The diagrams $\omega_{2m}$} \label{figomega}
\end{center}
\end{figure}

Given a finite set $X$ and $D\in \A(*_X)$, we denote by $\partial_D$ (resp$.$ $\langle D,- \rangle$) the operator that  maps any Jacobi diagram $E$ (having univalent vertices colored by $X$) to the sum of all ways of gluing
all the $x$--colored vertices of $D$ to  some of (resp$.$ to all) the $x$--colored vertices of $E$, for each $x\in X$.
In particular, we will need the constant
$$ 
\omega := \langle \Omega , \Omega \rangle  = \emptyset + \frac{1}{24} \tetatwo + 
\Big(\!\begin{array}{c}\hbox{\footnotesize trivalent diagrams} \\ \hbox{\footnotesize with $\geq 6$ vertices} \end{array}\!\Big)
\in \A(\emptyset).
$$
It has been proved in \cite{BLT} that the composition of linear isomorphisms
$$
\xymatrix{
\A(*) \ar[r]^-{\partial_{\Omega}}_-\cong & \A(*)   \ar[r]^-{\chi}_-\cong
& \A(\uparrow) \cong \A(\circlearrowleft) 
}
$$
preserves the algebra structures. 
Thus, as in \cite{BL}, we will work with the \emph{wheeled} version of the Kontsevich--LMO invariant, namely
$$
Z^\wheel(N,L) := \partial_{\Omega_l}^{-1} \chi^{-1}(Z(N,L)) \in \A(*_l).
$$
Here $\Omega_l\in \A(*_l)$ is the same as $\Omega \in \A(*)$ but with each univalent vertex now labeled with $l$.
For instance, \cite[Lemma 3.10]{BL} tells us that
\begin{equation} \label{eq:wheeled_Omega}
Z^\wheel(\operatorname{unknot})= \omega^{-1} \Omega \in \A(*).
\end{equation}

Let  now $M_1$ and $M_2$ be $\Q$HS. 
Let $K_1 \subset M_1$ and $K_2 \subset M_2$ be null-homologous knots. 
We give $K_i$ the \emph{preferred} parallel $\rho(K_i)$, 
i.e$.$ the one that bounds a surface in the knot exterior $X_i= M_i\setminus\Int(\neigh(K_i))$.
We identify $\partial X_i$ with $\torus$ as in \eqref{eq:tori},
and we consider the splice 
\begin{equation} \label{eq:M_bis}
M:= X_1\, \mathop{\bigcup}_{f}\, X_2
\end{equation}
defined by the homeomorphism $f:\torus \to \torus$
that is encoded by four integers $p,q,r,s$ as in \eqref{eq:matrix_f}.
A Mayer--Vietoris argument shows that
$M$ is a $\Q$HS  if and only if $r \neq0$ (see the proof of Lemma \ref{lem:MV} below).



Our main result computes the LMO invariant $Z(M)$ in terms of the wheeled Kontsevich--LMO invariants
$Z^\wheel(K_i) = Z^\wheel(M_i,K_i)$. 
The formula involves Dedekind symbols whose definition is recalled in Appendix \ref{app:tri}.
We also denote by $\theta$ the $\theta$--shaped diagram $\teta \in \A(\emptyset)$.

\begin{theorem} \label{thFormuleSplicing}
 Let $K_1 \subset M_1$ and $K_2 \subset M_2$ be null-homologous knots in $\Q$HS, and let $M$ be the splice of $(M_1,K_1)$ and $(M_2,K_2)$ as described by \eqref{eq:M_bis} in terms of four integers $p,q,r,s$. 
 If~$M$ is also a $\Q$HS,  then we have 
 $$
 Z(M)= \omega\, \exp\left(\frac{\theta}{48}\left({-S\left(\frac pr\right)+\frac{p+s}r}\right)\right) \, \left\langle \partial_{D_1}\big(Z^\wheel(K_1)\big)\big\vert_{k_1\to {-k_{2}}/{r}}\,  ,\, \partial_{D_2}\big(Z^\wheel(K_2)\big) \right\rangle
 $$
 where 
 \begin{equation} \label{eq:DD} 
 D_1 := \exp\Bigg(-\frac{p}{2r}\cups{k_1}{k_1}\Bigg) \quad \hbox{and} \quad 
 D_2 := \exp\Bigg(-\frac{s}{2r}\cups{k_2}{k_2}\Bigg).
 \end{equation}
\end{theorem}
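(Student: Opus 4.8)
The plan is to feed the surgery description of Proposition~\ref{prop:splice} into the Aarhus-integral computation of the LMO invariant, following the strategy of Bar-Natan and Lawrence~\cite{BL}. By Proposition~\ref{prop:splice}, $M = (M_1\sharp M_2)_{K_1\sqcup H\sqcup K_2}$, where $H=H(a_1,\dots,a_n)$ is a chain of framed unknots (framings $a_1,\dots,a_n$) with consecutive components Hopf-linked, the first clasping $K_1$ and the last clasping $K_2$, the $a_i$ being those of the decomposition~\eqref{eq:decomposition_f}. The first step is to write the wheeled Kontsevich--LMO invariant of this surgery link. Since $K_1\subset M_1$, $K_2\subset M_2$ and the whole chain lies in the connected-sum neck (an $S^3$), multiplicativity of $Z$ under connected sum factors the invariant as a product of $Z^\wheel(K_1)$, $Z^\wheel(K_2)$ and the symmetrized unknots of $H$ (each equal to $\omega^{-1}\Omega$ by~\eqref{eq:wheeled_Omega}); the only couplings are the Hopf clasps, which in the $\A(*)$ picture become single struts joining legs, and the framings, which give self-struts. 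Together these struts are recorded by the surgery linking matrix $\ell$ on $K_1\sqcup H\sqcup K_2$. As $K_1,K_2$ are null-homologous with preferred parallel, their self-linking vanishes, so $Z^\wheel(K_i)$ carries no $k_i$-strut and the entire Gaussian part sits in $\ell$.

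Next I perform the formal Gaussian integration over all surgery components, together with the $U_{\pm}$ normalization matching the signature of $\ell$, as in~\cite{Aarhus1,Aarhus2,BL}. I organize it by integrating out $H$ first. The $H$-block of $\ell$ is the tridiagonal matrix $T$ with diagonal $(a_1,\dots,a_n)$ and off-diagonal $\pm1$, coupled to $K_1$ and $K_2$ only through its first and last rows. Integrating out the $H$-legs contracts the wheels $\Omega$ of the unknots and replaces the $T$-struts by $-T^{-1}$-struts; concretely it produces the determinant $\det T$, a signature anomaly $\exp\bigl(\sg(\ell)\,\theta/48\bigr)$ coming from the $U_\pm$ normalization (here $b_2=1/48$ of~\eqref{eq:Omega}), and an effective $2\times2$ coupling between the $K_1$- and $K_2$-legs given by the block $A_{\mathrm{eff}}=-(T^{-1})_{\{1,n\}}$.

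The key step is to identify $A_{\mathrm{eff}}^{-1}$, i.e$.$ the $(K_1,K_2)$-block of $\ell^{-1}$, with the gluing data. Expanding the product~\eqref{eq:decomposition_f} in continuants of $(a_1,\dots,a_n)$ and using $ps-qr=1$, one finds
$$
\det\ell = \pm r,\qquad (\ell^{-1})_{\{K_1,K_2\}}=\frac{1}{r}\begin{pmatrix} p & 1 \\ 1 & s\end{pmatrix}
$$
up to signs fixed by the clasp orientations. The final formal Gaussian integration over $K_1,K_2$ (whose integrand is the strutless $Z^\wheel(K_1)Z^\wheel(K_2)$) then glues the struts $-(\ell^{-1})_{\{K_1,K_2\}}$ onto these invariants: the self-struts become precisely $D_1=\exp\bigl(-\tfrac{p}{2r}\cups{k_1}{k_1}\bigr)$ and $D_2=\exp\bigl(-\tfrac{s}{2r}\cups{k_2}{k_2}\bigr)$ of~\eqref{eq:DD}, applied via $\partial_{D_i}$, while the single cross-strut of weight $-1/r$ is realized by the relabelling $k_1\to-k_2/r$ followed by the total pairing $\langle\,\cdot\,,\,\cdot\,\rangle$ of the remaining $k_2$-legs. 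The determinant $|\det\ell|=|r|$, the unknot factors $\omega^{-1}\Omega$ and the Gaussian normalizations assemble into the overall constant $\omega$.

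The main obstacle is the prefactor, namely turning the raw signature anomaly $\sg(\ell)$ into the exponent $-S(p/r)+\tfrac{p+s}{r}$. This rests on the signature of the tridiagonal matrix $T$: by the computations of Appendix~\ref{app:tri}, $\sg(T)$ is expressed through the Dedekind symbol $S(p/r)$, and combining it with the self-framing contributions $\tfrac{p+s}{r}$ coming from the diagonal of $(\ell^{-1})_{\{K_1,K_2\}}$ yields the factor $\exp\bigl(\tfrac{\theta}{48}(-S(p/r)+\tfrac{p+s}{r})\bigr)$. A secondary difficulty is the sign and normalization bookkeeping throughout the integration---tracking the clasp orientations, the conventions~\eqref{eq:tori}, and the $U_\pm$ factors---so that the various $\theta/48$ contributions combine into this single exponential.
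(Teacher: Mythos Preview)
Your overall strategy is precisely the paper's: feed the surgery presentation of Proposition~\ref{prop:splice} into the Aarhus/BL machinery, compute the wheeled invariant of the surgery link via the connected-sum and Hopf-chain formulas of~\cite{BL}, perform the diagrammatic Gaussian integral, and identify the relevant entries of the inverse linking matrix and its signature via Appendix~\ref{app:tri}. Two points, however, are not handled correctly.

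First, the orientation bookkeeping is not just cosmetic. Proposition~\ref{prop:splice} produces the surgery link $K_1\sqcup H\sqcup \epsilon_nK_2$, where $\epsilon_nK_2$ is $K_2$ with orientation reversed when $n$ is even. This is what makes the off-diagonal corner $(-1)^{n+1}/r$ of $\Lambda^{-1}$ become a uniform $-1/r$ after you use $Z^\wheel(-K_2)=Z^\wheel(K_2)\vert_{k_2\to -k_2}$; your phrase ``up to signs fixed by the clasp orientations'' hides exactly this step.

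Second, and more seriously, your account of the $\theta$-prefactor does not work as written. You propose to read $S(p/r)$ off the signature of the $n\times n$ $H$-block $T=\Lambda(a_1,\dots,a_n)$ and then add ``self-framing contributions $\tfrac{p+s}{r}$ from the diagonal of $(\ell^{-1})_{\{K_1,K_2\}}$''. But Theorem~\ref{th:KM} applied to $T$ gives $3\zeta(T)-\operatorname{tr}(T)=S(s/q)-(s+p)/q$ (its associated $2\times2$ matrix is $\bigl(\begin{smallmatrix} s & r\\ q & p\end{smallmatrix}\bigr)$), not anything involving $S(p/r)$; and the diagonal of $\ell^{-1}$ feeds into $D_1,D_2$, not into the $\theta$-coefficient. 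The correct move is the paper's: since $K_1,K_2$ have self-linking $0$, the \emph{full} linking matrix $\Lambda=\Lambda(0,a_1,\dots,a_n,0)$ is itself tridiagonal, and one applies Theorem~\ref{th:KM} directly to $\Lambda$. Its associated $2\times2$ matrix is $\bigl(\begin{smallmatrix} -p & q\\ r & -s\end{smallmatrix}\bigr)$, whence $3\zeta(\Lambda)-\operatorname{tr}(\Lambda)=-S(p/r)+(p+s)/r$ in one stroke. (Equivalently: in your two-step integration you must add the signature of the Schur complement $A_{\mathrm{eff}}$ to $\zeta(T)$ and then invoke Dedekind reciprocity; the bordering-by-zeros shortcut avoids all of that.)
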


\begin{proof}
Choose a matrix decomposition as in \eqref{eq:decomposition_f}, which leads to $n\geq 1$ integers $a_1,\dots, a_n$,
and consider the Hopf chain $\widetilde{H} := H(0,a_1,\dots,a_n,0) \subset S^3$ shown below:
\begin{center}
\begin{tikzpicture} [scale=0.8]
 \draw (0,0) circle (0.6);
 \draw (0,0.6) node[above] {$0$};
 \circleLf{a_1}1
 \circleLf{}2
 \circleLf{}5
 \circleLf{a_{n-1}}6
 \circleLf{a_n}7
 \circleLf{0}8
 \draw[white,fill=white] (1.6,-0.62) -- (4,-0.62) -- (4,0.62) -- (1.6,0.62) -- (1.6,-0.62);
 \draw (2.8,0) node {$\dots$};
 \foreach \x in {0,1,6,7,8}{
 \draw[->] (0.8*\x-0.1,-0.6) -- (0.8*\x,-0.6);}
\end{tikzpicture}
\end{center} 
Reading from left to right, the  components of $\widetilde{H}$ are denoted $H', H_1,\dots , H_n , H''$.

Let $\epsilon_n K_2$ be the knot $K_2$ if $n$ is odd, and the knot $K_2$ {\emph{with reversed orientation}} if $n$ is even.
According to Proposition \ref{prop:splice}, $M$ is the result of doing surgery 
in $M_1 \sharp M_2$ along the link $K_1 \sqcup H \sqcup \epsilon_n K_2$ 
where $H$ is the Hopf chain $H(a_1,\dots,a_n)$ ``clasping'' positively $K_1$ on one side and $\epsilon_n K_2$ on the other side.
This link in $M_1 \sharp M_2 = (M_1 \sharp S^3)\sharp M_2$
is obtained from $(K_1 \sqcup \widetilde {H}) \sqcup K_2$ by doing the connected sum of $K_1\subset M_1$ with $H'\subset S^3$, in a first time, 
and by doing the connected sum of $H'' \subset M_1 \sharp S^3$ with $\epsilon_n K_2\subset M_2$, in a second time.
The way  the wheeled Kontsevich integral
behaves under connected sums of links is described in \cite[Lemma 3.9]{BL};
the proof given there works as well for the wheeled Kontsevich--LMO invariant. Applying this result, we get 
\begin{equation} \label{eq:uno}
Z^\wheel(K_1 \sqcup H \sqcup \epsilon_n K_2)
= \omega^2\, \Omega_{k_1}^{-1}\, \Omega_{k_2}^{-1}\, 
Z^\wheel(K_1)\, Z^\wheel(\widetilde{H})\vert_{h'\to k_1} \vert_{h'' \to k_2} \, Z^\wheel( \epsilon_n K_2)
\end{equation}
where $Z^\wheel( K_1 \sqcup H \sqcup \epsilon_n K_2) := Z^\wheel(M_1\sharp M_2, K_1 \sqcup H \sqcup \epsilon_n K_2)$. 
Besides, according to \cite[Prop.~4.3]{BL} which computes the value of $Z^\wheel$ on any Hopf chain, we have
\begin{eqnarray} \label{eq:dos}
Z^\wheel(\widetilde{H}) &=& \omega^{-1} \exp\Big(-\frac{\theta \sum_{i=1}^n a_i}{48}\Big) \prod_{i=1}^n \Omega_{h_i}^{-1} \cdot \\
\notag &&\cdot \exp\Big(\caps{h'}{h_{1}} + \sum_{i=1}^{n-1} \caps{h_i}{h_{i+1}} + \caps{h_n}{h''}
+\frac{1}{2} \sum_{i=1}^n a_i \caps{h_i}{h_{i}}  \Big).
\end{eqnarray}
Next, the way  the LMO invariant of a $\Q$HS can be computed from the wheeled Kontsevich integral 
of a surgery presentation in $S^3$ is described in  \cite[Eq.~(22)]{BL};
the proof given there works as well for a surgery presentation in any $\Q$HS other than $S^3$ 
--- see \cite[Remark 1.7]{BL} in this connection. Applying this result to $M=(M_1\sharp M_2)_{K_1\sqcup H\sqcup\epsilon_n K_2}$, 
we get the diagrammatic Gaussian integral
$$
Z(M)= \exp\Big(\frac{\theta \zeta(\Lambda) }{16}\Big) \int Z^\wheel(K_1 \sqcup H \sqcup \epsilon_n K_2)\, 
\Omega_{k_1}\, \Omega_{k_2}\, \prod_{i=1}^n \Omega_{h_i} \diff k_1 \diff h_1 \cdots \diff h_n \diff k_2,
$$ 
{where} $\zeta (\Lambda)$ denotes the signature of the linking matrix $\Lambda$ of $K_1 \sqcup H \sqcup \epsilon_n K_2$ in $M_1 \sharp M_2$.
Note that $\Lambda$ is the tridiagonal matrix
$$
\Lambda = \begin{pmatrix} 0  & 1  & 0 & \cdots & 0  &0 \\ 1 & a_1 & 1  & \cdots &0 & 0  \\
0 & 1 & a _2 &  \cdots & 0 &0 \\ \vdots & \vdots & \vdots & \ddots & \vdots & \vdots \\ 
 0 & 0 & 0  & \cdots & a_n & 1 \\ 0 & 0 & 0  & \cdots & 1 & 0 \end{pmatrix}.
$$
Combining this with \eqref{eq:uno} and \eqref{eq:dos}, we deduce that
\begin{eqnarray*}
Z(M)&=& \omega\,  \exp\Big(\frac{\theta (3\zeta(\Lambda) - \operatorname{tr}(\Lambda)) }{48}\Big)\,  \int  
Z^\wheel(K_1)\,   Z^\wheel(\epsilon_n K_2) \cdot \\
&&   \cdot \exp\Big(\caps{k_1}{h_{1}} + \sum_{i=1}^{n-1} \caps{h_i}{h_{i+1}} + \caps{h_n}{k_2}
+\frac{1}{2} \sum_{i=1}^n a_i \caps{h_i}{h_{i}}  \Big)   \diff k_1 \diff h_1 \cdots \diff h_n \diff k_2.
\end{eqnarray*}
Since $K_1$ is endowed with its preferred parallel, $Z^\wheel( K_1)$ has no strut;
similarly $Z^\wheel( \epsilon_n K_2)$ has no strut. Thus, the ``strut'' part of the above {integrand} is fully given by 
the ``$\exp$'' part that is apparent there. To compute  this Gaussian integral, we need to invert the tridiagonal matrix~$\Lambda$
but, knowing that $Z^\wheel(K_1)\,   Z^\wheel(\epsilon_n K_2)$ has no univalent vertex colored by $h_1,\dots,h_n$,
we only need to determine the four corners of $\Lambda^{-1}$. 
{To} this purpose, we compute the $2\times 2$ matrix associated to the tridiagonal matrix $\Lambda$
(see Appendix \ref{app:tri}):
\begin{eqnarray*}
&& \begin{pmatrix} 0 & -1 \\ 1 & 0 \end{pmatrix} \cdot \begin{pmatrix} 0 & -1 \\ 1 & 0 \end{pmatrix}
  \begin{pmatrix} a_1 & -1 \\ 1 & 0 \end{pmatrix} \cdots
  \begin{pmatrix} a_n & -1 \\ 1 & 0 \end{pmatrix} \begin{pmatrix} 0 & -1 \\ 1 & 0 \end{pmatrix}\\
&=& \begin{pmatrix} 0 & -1 \\ 1 & 0 \end{pmatrix} \cdot \begin{pmatrix} 0 & -1 \\ 1 & 0 \end{pmatrix} \cdot 
  \begin{pmatrix} 1 & 0 \\ 0 & -1 \end{pmatrix} \cdot 
  \begin{pmatrix} a_1 & 1 \\ -1 & 0 \end{pmatrix} \cdots
  \begin{pmatrix} a_n & 1 \\ -1 & 0 \end{pmatrix} \begin{pmatrix} 0 & 1 \\ -1 & 0 \end{pmatrix} 
  \cdot  \begin{pmatrix} 1 & 0 \\ 0 & -1 \end{pmatrix} \\
& \stackrel{\eqref{eq:decomposition_f}}{=} &   
\begin{pmatrix} 0 & -1 \\ 1 & 0 \end{pmatrix} \cdot \begin{pmatrix} 0 & -1 \\ 1 & 0 \end{pmatrix} \cdot 
  \begin{pmatrix} 1 & 0 \\ 0 & -1 \end{pmatrix} \cdot 
  \begin{pmatrix} p & r \\ q & s \end{pmatrix}^T \cdot \begin{pmatrix} 1 & 0 \\ 0 & -1 \end{pmatrix}
   \ = \
  \begin{pmatrix} -p & q \\ r & -s \end{pmatrix} 
  \end{eqnarray*} 
and we deduce from Proposition \ref{prop:BL} that
$$
\Lambda^{-1} = \begin{pmatrix}  p/r & ? & \cdots & ? & (-1)^{n+1}/r \\ ? & ? & \cdots & ? & ? \\
\vdots & \vdots & \ddots & \vdots  & \vdots \\
? & ? & \cdots & ? & ? \\ (-1)^{n+1}/r & ? & \cdots & ? & s/r
\end{pmatrix}.
$$
Therefore, performing Gaussian integration, we get
\begin{eqnarray*}
Z(M)&=& \omega\, \exp\Big(\frac{\theta (3\zeta(\Lambda) - \operatorname{tr}(\Lambda)) }{48}\Big) \cdot\\ 
&& \cdot \left\langle \exp\left( -\frac{p}{2r} \cups{k_1}{k_1} + \frac{(-1)^{n}}{r} \cups{k_1}{k_2} -\frac{s}{2r} \cups{k_2}{k_2} \right)
, Z^\wheel(K_1)\,   Z^\wheel( \epsilon_n K_2) \right\rangle_{}
\end{eqnarray*}
and, since $Z^\wheel( - K_2)$ is obtained from $Z^\wheel( K_2)$ by the change $k_2 \to -k_2$, this is equivalent to
\begin{eqnarray*}
Z(M)&=& \omega\, \exp\Big(\frac{\theta (3\zeta(\Lambda) - \operatorname{tr}(\Lambda)) }{48}\Big) \cdot\\ 
&& \cdot \left\langle \exp\left( -\frac{p}{2r} \cups{k_1}{k_1} - \frac{1}{r} \cups{k_1}{k_2} -\frac{s}{2r} \cups{k_2}{k_2} \right)
, Z^\wheel(K_1)\,   Z^\wheel( K_2) \right\rangle_{} \\
\end{eqnarray*}
whatever the parity of $n$ is. Using now Theorem \ref{th:KM},
we obtain
 \begin{eqnarray*}
 Z(M)&=& \omega\, \exp\left(\frac{\theta}{48}\left(-S\left(\frac pr\right)+\frac{p+s}r\right)\right)  \cdot\\
 && \cdot \left\langle\exp\left(-\frac{p}{2r} \cups{k_1}{k_1} - \frac{1}{r} \cups{k_1}{k_2} -\frac{s}{2r} \cups{k_2}{k_2}\right),
 Z^\wheel(K_1)\, Z^\wheel(K_2)\right\rangle.
 \end{eqnarray*}
Using the notation \eqref{eq:DD}, this is equivalent to 
$$
 Z(M) = \omega\, \exp\left(\frac{\theta}{48}\left(-S\left(\frac pr\right)+\frac{p+s}r\right)\right) 
 \left\langle\exp\left(-\frac 1r\cups{k_1}{k_2}\right),
 \partial_{D_1}\big(Z^\wheel(K_1)\big)\, \partial_{D_2}\big(Z^\wheel(K_2)\big)\right\rangle
$$
 and the conclusion easily follows.
\end{proof}

\begin{example}
When the splice of $(M_1,K_1)$ and $(M_2,K_2)$ is standard, 
Theorem~\ref{thFormuleSplicing} simplifies as follows.
Then $\mu(K_1)$ and $\rho(K_1)$ are glued along $\rho(K_2)$ and $\mu(K_2)$, respectively: thus $p=s=0$ and $q=-r=1$.
Hence, by identifying the symbols $k_1$ and $k_2$, we get
\begin{equation} \label{eq:standard}
 Z(M)= \omega\,  \big\langle  Z^\wheel(K_1)\,  ,\, Z^\wheel(K_2)  \big\rangle.
\end{equation}
\end{example}

Bar-Natan and Lawrence developed their techniques (which we have intensively used in the proof of Theorem~\ref{thFormuleSplicing})
to prove a \emph{rational} surgery formula for the LMO invariant \cite[Eq.~(23)]{BL}.
We now explain how their  formula --- in the case of a  rational surgery along a single knot --- 
is recovered from Theorem~\ref{thFormuleSplicing}.

\begin{corollary}[Bar-Natan \& Lawrence] \label{cor:BL}
Let $L\subset N$ be a null-homologous knot in a~$\Q$HS, with framing given by the preferred parallel.
Let $r,s$ be  non-zero coprime integers, 
and let $M$ be  obtained from $N$ by an $(r/s)$--surgery along $L$. Then
$$
Z(M)=  \exp\left({\frac{\theta}{48}\left({-S\left(\frac sr\right)+\frac{s}r}\right)}\right) \, 
 \left\langle \Omega_{l/r}\,  ,\, \partial_{D}\big(Z^\wheel(L)\big) \right\rangle
 \quad \hbox{with }
 D:= \exp\Big(-\frac{s}{2r}\cups{l}{l}\Big)
$$
and the latter identity is equivalent to \cite[Eq.~(23)]{BL}.
\end{corollary}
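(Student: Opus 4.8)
The plan is to realize the $(r/s)$--surgery as a splice with the unknot and then specialize Theorem \ref{thFormuleSplicing}. Since $S^3\setminus\Int(\neigh(U))$ is a solid torus for the unknot $U\subset S^3$, filling the exterior $N\setminus\Int(\neigh(L))$ with it is exactly a Dehn surgery along $L$; so I would present $M$ as the splice of $(S^3,U)$ and $(N,L)$, i.e.\ take $(M_1,K_1)=(S^3,U)$ and $(M_2,K_2)=(N,L)$ in \eqref{eq:M_bis}. First I would pin down the gluing homeomorphism: tracking the disk--bounding curve $\rho(U)$ of the solid torus $X_1$ through $f$ and the identifications \eqref{eq:tori}, one checks that the surgery slope equals $r\,\mu(L)+s\,\rho(L)$ precisely when the matrix \eqref{eq:matrix_f} of $f$ has upper--right entry $r$ and lower--right entry $s$; the remaining entries $p,q$ are then any integers with $ps-qr=1$. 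I would also note that replacing $(p,q)$ by $(p+r,q+s)$ amounts to precomposing $f$ with a Dehn twist along $\rho(U)$, which extends over the solid torus $X_1$ and hence leaves $M$ unchanged: this invariance both lets me choose a convenient representative with $p\equiv s^{-1}\pmod r$ and guarantees that the final formula cannot depend on $p$.

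Next I would feed this into Theorem \ref{thFormuleSplicing}. With $K_1=U$ the first slot becomes $\partial_{D_1}\big(Z^\wheel(U)\big)$, and substituting the value $Z^\wheel(U)=\omega^{-1}\Omega$ from \eqref{eq:wheeled_Omega} lets the scalar $\omega^{-1}$ cancel the overall $\omega$ of the theorem. The color change $k_1\to -k_2/r$ acts on $\Omega_{k_1}$, and because $\Omega=\exp(\sum_m b_{2m}\omega_{2m})$ involves only \emph{even} wheels the sign is harmless, so (setting $k_2=l$) $\Omega_{k_1}$ is turned into the rescaled $\Omega_{l/r}$. Meanwhile the second slot $\partial_{D_2}\big(Z^\wheel(K_2)\big)$ with $D_2=\exp(-\tfrac{s}{2r}\cups{k_2}{k_2})$ is literally the $\partial_{D}\big(Z^\wheel(L)\big)$ of the statement. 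Thus, up to the $\theta$--prefactor, $Z(M)$ already has the announced shape $\big\langle \Omega_{l/r},\partial_D(Z^\wheel(L))\big\rangle$.

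The hard part will be the remaining slot $\partial_{D_1}(\Omega_{k_1})$ with $D_1=\exp(-\tfrac{p}{2r}\cups{k_1}{k_1})$, which is non--trivial as soon as $p\neq 0$ (i.e.\ $r\neq\pm1$): I must show that this Gaussian self--action of $\Omega$ returns the rescaled $\Omega_{l/r}$ up to a power of $\theta$. Gluing the strut of $D_1$ onto the spokes of the wheels of $\Omega$ rescales the surviving spokes while closing off $\theta$--shaped components, exactly as in the Bar--Natan--Lawrence computation of the LMO invariant of lens spaces; the lowest correction, obtained by joining the two legs of $\omega_2$ (with $b_2=\tfrac1{48}$) through the $-\tfrac{p}{2r}$ strut, already contributes $\exp\!\big(-\tfrac{p}{r}\cdot\tfrac{\theta}{48}\big)$, and I would fix the full $\theta$--exponent using the $p$--independence established above. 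Finally I would assemble the prefactor: the signature/trace term $-S(p/r)+(p+s)/r$ of Theorem \ref{thFormuleSplicing} combines with this correction, and the inversion symmetry $S(p/r)=S(s/r)$ for $ps\equiv1\pmod r$ recorded in Appendix \ref{app:tri} (via Theorem \ref{th:KM}) collapses the exponent to $-S(s/r)+s/r$, the $p/r$ contributions cancelling as forced by $p$--independence. Recognizing the resulting identity as \cite[Eq.~(23)]{BL} is then a matter of notation, which yields the asserted equivalence.
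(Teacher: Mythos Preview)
Your overall strategy matches the paper's: specialize Theorem~\ref{thFormuleSplicing} with $(M_1,K_1)=(S^3,U)$, use $Z^\wheel(U)=\omega^{-1}\Omega$ to cancel the~$\omega$, exploit the evenness of the wheels in $\Omega$ to absorb the sign in the color change, and pass from $S(p/r)$ to $S(s/r)$ via $p\equiv s^{-1}\pmod r$.

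The gap is at the step you yourself flag as ``the hard part''. What is needed is the identity
\[
\partial_{D_1}(\Omega_{k_1}) \;=\; \exp\!\Big(-\frac{p}{r}\cdot\frac{\theta}{48}\Big)\,\Omega_{k_1},
\]
which the paper invokes directly as \eqref{eq:d_Omega} (that is, \cite[Cor.~3.4]{BL}). This is a genuine consequence of the Wheels/Wheeling theorems, not a combinatorial triviality: a priori, gluing struts into the wheels of $\Omega$ produces diagrams with two or more loops still carrying legs, not merely ``closed-off $\theta$--shaped components'' times~$\Omega$. Your proposed workaround---compute the degree--two contribution and then ``fix the full $\theta$--exponent using $p$--independence''---does not close this. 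First, $p$--independence of $Z(M)$ only helps once you already know that $\partial_{D_1}(\Omega)$ is a scalar in $\A(\emptyset)$ times $\Omega$, which is precisely the non-obvious content of \eqref{eq:d_Omega}. Second, even granting that scalar form $F(p)\cdot\Omega$, the admissible $p$'s for fixed $(r,s)$ form a single coset modulo~$r$, so $p$--independence yields only the functional equation $F(p+r)=F(p)\exp(-\theta/48)$; this together with the degree--two term does not determine the higher-degree part of $\log F$. The clean fix is simply to cite \eqref{eq:d_Omega}, as the paper does.

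A smaller point: the equivalence with \cite[Eq.~(23)]{BL} is not ``a matter of notation''. In the paper this step requires rewriting the pairing, a second application of \eqref{eq:d_Omega} to evaluate $\partial_D(\Omega_{l/s})$, and then the reciprocity law \eqref{eq:reciprocity} for the Dedekind symbol to match the two $\theta$--prefactors; without reciprocity the exponents $-S(s/r)+s/r$ and $3\operatorname{sign}(r/s)+S(r/s)-r/s-1/(rs)$ do not visibly agree.
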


\begin{proof}
We consider $(M_1,K_1):=(S^3,\operatorname{unknot})$,
$(M_2,K_2):=(N,L)$ and the integers $p,q$ are chosen so that $ps-qr=1$. 
Then, the corresponding splice is the $3$--manifold $M$. Recall from~\eqref{eq:wheeled_Omega} that $Z^\wheel(K_1) = \omega^{-1} \Omega_{k_1}$.
Hence, in this particular situation, Theorem~\ref{thFormuleSplicing} gives 
 $$
 Z(M)=  \exp\left(\frac{\theta}{48}\left({-S\left(\frac pr\right)+\frac{p+s}r}\right)\right) \, 
 \left\langle \partial_{D_1}\big( \Omega_{k_1}\big)\big\vert_{k_1\to {-k_{2}}/{r}}\,  ,\, \partial_{D_2}\big(Z^\wheel(K_2)\big) \right\rangle
 $$
 where 
$$
 D_1 := \exp\Bigg(-\frac{p}{2r}\cups{k_1}{k_1}\Bigg) \quad \hbox{and} \quad 
 D_2 := \exp\Bigg(-\frac{s}{2r}\cups{k_2}{k_2}\Bigg).
$$
Besides, recall from \cite[Cor. 3.4]{BL} that, for any $\alpha \in \Q$, we have
\begin{equation} \label{eq:d_Omega}
\partial_E(\Omega) = \exp\Big(\frac{\alpha \theta}{48}\Big)\, \Omega \ \in \A(*)
\quad \hbox{where } E:= \exp\left(\frac{\alpha}{2}\cups{ }{ }\right).
\end{equation}
In particular, $\partial_{D_1}\Omega_{k_1}=\exp\left(-\frac pr\frac{\theta}{48}\right)\Omega_{k_1}$.
Hence, using that $\Omega$ only shows diagrams with an even number of univalent vertices
(see \eqref{eq:Omega}) and that $S(p/r)=S(s/r)$ (since $p\equiv s^{-1} \mod r$), we get
\begin{equation} \label{eq:B1}
 Z(M)=  \exp\left(\frac{\theta}{48}\left({-S\left(\frac sr\right)+\frac{s}r}\right)\right) \, 
 \left\langle \Omega_{l/r}\,  ,\, \partial_{D}\big(Z^\wheel(L)\big) \right\rangle.
\end{equation}
 
We now check the equivalence between this formula and \cite[Eq.~(23)]{BL}, which states (in the case of $N=S^3$) that
 \begin{equation} \label{eq:B2}
 Z(M) = \exp\left(\frac{\theta}{48} \Big(3 \operatorname{sign}\Big(\frac r s\Big) + S\Big(\frac r s\Big)  - \frac rs\Big) \right)
\left\langle \exp\Big(\frac{-s}{2r} \cups{l}{l}\Big), Z^\wheel(L)\,  \Omega_{l/s}\right\rangle.
\end{equation}
Note that
 $$
 \left\langle \exp\left(\frac{-s}{2r} \cups{l}{l}\right), Z^\wheel(L)\,  \Omega_{l/s}\right\rangle
 = \left\langle \partial_{D}( \Omega_{l/s})\big\vert_{l \to -sl/r} \, ,\, \partial_{D}\big(Z^\wheel(L)\big)  \right\rangle;
 $$
 besides, using \eqref{eq:d_Omega} one more time, we obtain
 $$
\partial_{D}(\Omega_{l/s})= \exp\left(-\frac{\theta}{48}\frac{1}{rs}\right) \Omega_{l/s};
 $$
 hence we deduce that 
  $$
 \left\langle \exp\left(\frac{-s}{2r} \cups{l}{l}\right), Z^\wheel(L)\,  \Omega_{l/s}\right\rangle
 = \exp\left(-\frac{\theta}{48}\frac{1}{rs}\right)
 \left\langle \Omega_{l/r} \, ,\, \partial_{D}(Z^\wheel(L))  \right\rangle.
 $$
 Then the equivalence between \eqref{eq:B1} and \eqref{eq:B2} 
 follows from the reciprocity law \eqref{eq:reciprocity} for Dedekind symbols.
\end{proof}

\begin{example}
Think of the lens space $L(r,s)$ as $S^3$ surgered along the $(r/s)$--framed unknot.
Then, a direct application of Corollary \ref{cor:BL} using \eqref{eq:wheeled_Omega} and \eqref{eq:d_Omega}, again, gives 
$$
Z\big(L(r,s)\big) = \omega^{-1}  
\exp\left(-\frac{\theta}{48}{S\left(\frac sr\right)}\right) \left\langle \Omega_{l/r} , \Omega_l \right\rangle.
$$
This formula is easily seen to be equivalent to \cite[Prop. 5.1]{BL}.
\end{example}

\section{Low degree formulas} \label{sec:low_degree}

In this section, we apply Theorem~\ref{thFormuleSplicing} to get splicing formulas for the lowest degree terms of the LMO invariant.
In the sequel, the \emph{degree} of a Jacobi diagram  refers to its \emph{internal} degree, 
{{\em ie}} its number of trivalent vertices, and the  symbol ``$\equiv_n$'' stands for an equality ``up to terms of degree at least $n$''.

According to \cite[Prop$.$ 5.3]{LMO}, the LMO invariant of any $\Q$HS $M$
can be written as follows up to degree $5$: 
\begin{equation} \label{eq:LowDeg}
Z(M)\equiv_6 \emptyset\, + \frac 14\lambda_{\operatorname{W}}(M)\teta+\lambda_2(M)\tetatwo+\frac 1{32}\lambda_{\operatorname{W}}(M)^2\teta\teta.
\end{equation}
Here $\lambda_{\operatorname{W}}(M)\in \Q$ is the Casson--Walker invariant of $M$ as normalized in \cite{Walker},
and $\lambda_2(M)\in\Q $ is a finite-type invariant of degree $4$ 
(which, up to a multiplicative constant, is the second term of the Ohtsuki series of $M$ \cite{Ohtsuki2}).

In \cite[Cor$.$ 1.2]{Fujita}, Fujita gives a formula for the Casson--Walker invariant of a $\Q$HS obtained as the splice of two knots in integral homology 3--spheres. The next proposition generalizes his result to the splice of null-homologous knots in $\Q$HS. Given a null-homologous knot $K$ in a $\Q$HS $M$, denote by 
$$
\Delta_K(t)\in\Qt
$$
its Alexander polynomial normalized so that $\Delta_K(t^{-1})=\Delta_K(t)$ and $\Delta_K(1)=1$ {(see \cite[Theorem 1.5]{Mou})}.

\begin{proposition}[Fujita] \label{PropFujita}
 Let $K_1 \subset M_1$ and $K_2 \subset M_2$ be null-homologous knots in $\Q$HS and let $M$ be the splice of $(M_1,K_1)$ and $(M_2,K_2)$ described by \eqref{eq:M_bis} in terms of four integers $p,q,r,s$. If $M$ is also a $\Q$HS, then  
 $$
 \lambda_{\operatorname{W}}(M)=\lambda_{\operatorname{W}}(M_1)+\lambda_{\operatorname{W}}(M_2) -\frac 1{12}S\left(\frac pr\right)
 +\frac{p}r\Delta''_{K_1}(1)+\frac{s}r\Delta''_{K_2}(1).
 $$
\end{proposition}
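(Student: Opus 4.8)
The plan is to extract the degree-two (i.e$.$ coefficient-of-$\theta$) part of the splicing formula of Theorem~\ref{thFormuleSplicing} and to identify it, through~\eqref{eq:LowDeg}, with $\frac14\lambda_{\operatorname{W}}(M)$. Write $Z(M)=\omega\,\exp\!\big(\frac{\theta}{48}(-S(\frac pr)+\frac{p+s}r)\big)\,P$, where $P:=\langle\partial_{D_1}(Z^\wheel(K_1))\vert_{k_1\to -k_2/r},\,\partial_{D_2}(Z^\wheel(K_2))\rangle$ and $D_1,D_2$ are the strut exponentials of~\eqref{eq:DD}. First I would record the low-degree behaviour of the three factors. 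The constant $\omega$ equals $\emptyset$ up to degree~$4$, so it acts through its unit part only and contributes nothing at degree~$2$; the exponential contributes $\frac1{48}(-S(\frac pr)+\frac{p+s}r)\,\theta$ at degree~$2$. Hence the coefficient of $\theta$ in $Z(M)$ is $\frac1{48}(-S(\frac pr)+\frac{p+s}r)$ plus the coefficient of $\theta$ in $P$.

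The heart of the matter is the degree-two part of $P$, for which I would first establish the expansion
\[
Z^\wheel(K_i)\equiv_3 \emptyset+\tfrac14\lambda_{\operatorname{W}}(M_i)\,\theta+\big(\tfrac1{48}-\tfrac14\Delta''_{K_i}(1)\big)\,\omega_2 .
\]
Its leg-free part is the degree-two part of $Z(M_i)$ (the leg-free part of $Z^\wheel(M_i,K_i)$ being $Z(M_i)$, since the maps $\chi^{-1}$ and $\partial_\Omega^{-1}$ act on legs only). Its $\omega_2$-coefficient is read off from the wheeled form of the Melvin--Morton--Rozansky theorem, describing the one-loop part of $Z^\wheel(K_i)$ through $\Delta_{K_i}$, together with $\log\Delta_{K_i}(e^x)=\tfrac12\Delta''_{K_i}(1)x^2+O(x^4)$ (recall $\Delta_{K_i}(1)=1$ and $\Delta'_{K_i}(1)=0$ by symmetry); here the universal summand $\tfrac1{48}=b_2$ is the $\omega_2$-coefficient of $\Omega$ from~\eqref{eq:Omega}. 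Crucially, as $K_i$ carries its preferred parallel, $Z^\wheel(K_i)$ has no strut, so the only connected degree-$\leq 2$ diagrams occurring are $\emptyset$, $\theta$ and $\omega_2$.

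Next I would enumerate the closed degree-two diagrams produced by $P$. The leg-free $\theta$-terms of the two factors pass through the pairing (against the unit part of the other factor) as $\tfrac14(\lambda_{\operatorname{W}}(M_1)+\lambda_{\operatorname{W}}(M_2))\,\theta$. Closing the $\omega_2$ of $K_1$ by one strut of $D_1$ turns it into a $\theta$: the two matchings of the strut ends with the two legs of $\omega_2$, normalized exactly as in~\eqref{eq:d_Omega}, give the coefficient $-\tfrac pr\big(\tfrac1{48}-\tfrac14\Delta''_{K_1}(1)\big)$, and symmetrically $D_2$ yields $-\tfrac sr\big(\tfrac1{48}-\tfrac14\Delta''_{K_2}(1)\big)$. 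Every remaining gluing raises the degree past two: the (rescaled) cross-gluing of the two wheels is degree~$4$, and there is no $2$-legged degree-zero piece to pair against an $\omega_2$, again because no strut is present. Thus the $\theta$-coefficient of $P$ is $\tfrac14(\lambda_{\operatorname{W}}(M_1)+\lambda_{\operatorname{W}}(M_2))-\tfrac pr\big(\tfrac1{48}-\tfrac14\Delta''_{K_1}(1)\big)-\tfrac sr\big(\tfrac1{48}-\tfrac14\Delta''_{K_2}(1)\big)$.

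Finally I would add in the exponential's contribution: the terms $-\tfrac{p+s}{48r}$ coming from the wheels exactly cancel the $+\tfrac{p+s}{48r}$ arising from $\tfrac1{48}\cdot\tfrac{p+s}r$, leaving $\tfrac14(\lambda_{\operatorname{W}}(M_1)+\lambda_{\operatorname{W}}(M_2))-\tfrac1{48}S(\tfrac pr)+\tfrac{p}{4r}\Delta''_{K_1}(1)+\tfrac{s}{4r}\Delta''_{K_2}(1)$ as the coefficient of $\theta$ in $Z(M)$. Equating this with $\tfrac14\lambda_{\operatorname{W}}(M)$ and multiplying by~$4$ gives the asserted formula. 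The main obstacle is pinning down the normalizations precisely, chiefly the $\omega_2$-coefficient of $Z^\wheel(K_i)$ in terms of $\Delta''_{K_i}(1)$ and the combinatorial factor in the strut-closing; it is reassuring that the apparently spurious $\tfrac{p+s}{48r}$ cancels, which is the very same interplay between $\Omega$ and the strut operator already exploited via~\eqref{eq:d_Omega} in the proof of Corollary~\ref{cor:BL}.
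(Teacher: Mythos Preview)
Your argument is correct and follows essentially the same route as the paper: both extract the coefficient of $\theta$ on each side of Theorem~\ref{thFormuleSplicing}, using that the degree-two part of $Z^\wheel(K_i)$ is $\emptyset+\tfrac14\lambda_{\operatorname{W}}(M_i)\,\theta+(b_2-\tfrac12 a_2(K_i))\,\omega_2$ (the paper derives this from Kricker's Theorem~\ref{thKricker} via Lemma~\ref{lem:calculs}, which is the same input you invoke under the name Melvin--Morton--Rozansky). The only cosmetic difference is that the paper carries the Conway coefficient $a_2(K_i)$ and substitutes $\Delta''_{K_i}(1)=2a_2(K_i)$ and $b_2=\tfrac1{48}$ at the end, whereas you work with $\Delta''_{K_i}(1)$ throughout.
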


\begin{example}
For a standard splicing, we have 
$\lambda_{\operatorname{W}}(M)=\lambda_{\operatorname{W}}(M_1)+\lambda_{\operatorname{W}}(M_2)$.
This  additivity formula  for $\lambda_{\operatorname{W}}$ was obtained in \cite{BN,FN}.
\end{example}

\begin{remark}
In the case $(M_1,K_1):=(S^3,\operatorname{unknot})$, Proposition \ref{PropFujita} specializes 
to Walker's rational surgery formula for $\lambda_{\operatorname{W}}$ --- see \cite[Prop$.$ (6.2)]{Walker}.
\end{remark}

To express the splicing formula for $\lambda_2$, we need the following invariant of a null-homologous knot $K$ in a $\Q$HS $M$:
let $v(K)$ be the coefficient of $\tetalegs$ in $\chi^{-1}Z(K)$ where $Z(K)=Z(M,K)$ is the Kontsevich--LMO invariant.
When ${M=S^3}$, we have {$v(K)=-\frac{1}{24}j_3(K)$}
where $j_3(K)$ is the coefficient of $h^3$ in the Jones polynomial $V_K(t)$ evaluated at $t:=e^h$ (see \cite[Lemma~2.1]{Ito}).

\begin{proposition} \label{PropDegFour}
 Let $K_1 \subset M_1$ and $K_2 \subset M_2$ be null-homologous knots in $\Q$HS and let $M$ be the splice of $(M_1,K_1)$ and $(M_2,K_2)$ described by \eqref{eq:M_bis} in terms of four integers $p,q,r,s$. If $M$ is also a $\Q$HS, then 
 \begin{align*}
  \lambda_2(M)=\ & \lambda_2(M_1) +\lambda_2(M_2) + \frac{1}{1152}\Big(\frac{1}{r^2}-1\Big)\\
  &+\frac 1{96}\left(1-\frac 1{r^2}\right)\big(\Delta_{K_1}''(1)+\Delta_{K_2}''(1)\big)
  +\frac 9{16}\frac{p^2}{r^2}\Delta_{K_1}''(1)+\frac 9{16}\frac{s^2}{r^2}\Delta_{K_2}''(1) \\ 
  & +\frac 7{32}\frac{p^2}{r^2}\big(\Delta_{K_1}''(1)\big)^2+\frac 7{32}\frac{s^2}{r^2}\big(\Delta_{K_2}''(1)\big)^2
  +\frac 1{8r^2}\Delta_{K_1}''(1)\Delta_{K_2}''(1) \\
  & -\frac 5{96}\frac{p^2}{r^2} \Delta_{K_1}^{(4)}(1)-\frac 5{96}\frac{s^2}{r^2} \Delta_{K_2}^{(4)}(1)-\frac pr v(K_1)-\frac sr v(K_2).
 \end{align*}
\end{proposition}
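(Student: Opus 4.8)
The plan is to expand the right-hand side of the splicing formula of Theorem~\ref{thFormuleSplicing} up to internal degree~$4$ and to read off the coefficient of $\tetatwo$. Since every closed trivalent diagram has an even number of vertices, the universal shape \eqref{eq:LowDeg} shows that $\lambda_2(M)$ is exactly the coefficient of $\tetatwo$ in $Z(M)$ modulo degree~$6$; thus it suffices to compute the degree-$4$ part of the pairing $\langle \partial_{D_1}(Z^\wheel(K_1))\vert_{k_1\to -k_2/r}, \partial_{D_2}(Z^\wheel(K_2))\rangle$ and then to discard the $\teta\teta$--contribution. A first useful observation is that the scalar prefactor $\exp\big(\frac{\theta}{48}(-S(\frac pr)+\frac{p+s}r)\big)$ only multiplies the pairing by powers of $\theta=\teta$, so it can contribute to the coefficients of $\teta$ and $\teta\teta$ but never to that of $\tetatwo$; this already explains why the Dedekind symbol is absent from the final formula and confines the $\teta\teta$--bookkeeping to the Casson--Walker level.

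First I would record the expansion of the wheeled invariant $Z^\wheel(K_i)\in\A(*_{k_i})$ up to internal degree~$4$. Because $K_i$ is null-homologous with its preferred parallel, this series has no strut, its leg-free part equals the ambient LMO invariant $Z(M_i)$ (contributing $\lambda_2(M_i)\tetatwo$ and the $\lambda_{\operatorname{W}}(M_i)$--pieces via \eqref{eq:LowDeg}), and the only other relevant diagrams are the wheels $\omega_2,\omega_4$ coloured by $k_i$ and the two-legged diagram $\tetalegs$, which contributes the invariant $v(K_i)$. By the Melvin--Morton--Rozansky/wheeling description of the loop expansion, the coefficients of $\omega_2$ and $\omega_4$ are affine-linear in $\Delta_{K_i}''(1)$ and $\Delta_{K_i}^{(4)}(1)$, their constant parts being fixed by the unknot normalization \eqref{eq:wheeled_Omega}. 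I would pin down the $\omega_2$--coefficient using the same degree-$2$ computation that underlies Proposition~\ref{PropFujita}.

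Next I would substitute these expansions into the pairing and expand to degree~$4$. The strut operators $\partial_{D_i}$ of \eqref{eq:DD} contract pairs of $k_i$--legs with weights $-\frac p{2r}$ and $-\frac s{2r}$, the substitution rescales $k_1$ by $-1/r$, and the final bracket contracts the surviving legs across the two factors with off-diagonal weight $-1/r$. Closing the diagrams then produces $\tetatwo$ in several ways: the $\omega_4$ of each knot closed within its own factor (giving the $\frac{p^2}{r^2}$ and $\frac{s^2}{r^2}$ terms carrying $\Delta''$, $(\Delta'')^2$ and $\Delta^{(4)}$); the products $\omega_2\sqcup\omega_2$ inside one factor together with the cross pairing of the two $\omega_2$'s across factors (giving the remaining $(\Delta'')^2$ terms and the $\frac1{8r^2}\Delta_{K_1}''(1)\Delta_{K_2}''(1)$ term); the two-legged $\tetalegs$ closed by a single strut (giving the linear terms $-\frac pr v(K_1)$ and $-\frac sr v(K_2)$); and the constant $\frac1{24}\tetatwo$ coming from the prefactor $\omega=\langle\Omega,\Omega\rangle$ together with the $\Delta$--independent parts of the wheel closures (giving $\frac1{1152}(\frac1{r^2}-1)$ and the mixed $\frac1{96}(1-\frac1{r^2})(\Delta_{K_1}''(1)+\Delta_{K_2}''(1))$). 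Collecting these with the correct symmetry factors and the value $b_2=\frac1{48}$ from \eqref{eq:Omega} yields the asserted coefficient.

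The main obstacle will be the combinatorial bookkeeping: one must enumerate exactly those leg-pairings that assemble into the connected diagram $\tetatwo$, as opposed to $\teta\teta$ or disconnected or wrong-degree diagrams, and attach to each the correct rational weight built from the four corners $\frac pr,\frac sr,\pm\frac1r$ of the inverse linking matrix (furnished by Proposition~\ref{prop:BL} in the proof of Theorem~\ref{thFormuleSplicing}) together with the correct gluing multiplicity. In particular the genuine $\tetatwo$--contribution has to be cleanly separated from the $\teta\teta$--contribution; for the latter one uses \eqref{eq:LowDeg} to rewrite $\frac1{32}\lambda_{\operatorname{W}}(M)^2\teta\teta$ via the already-proved splicing formula for $\lambda_{\operatorname{W}}(M)$ (Proposition~\ref{PropFujita}), and after this subtraction the surviving coefficient of $\tetatwo$ is precisely $\lambda_2(M)$. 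Matching it with the terms collected above gives the stated formula.
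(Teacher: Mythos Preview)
Your proposal is correct and follows essentially the same route as the paper: expand the right-hand side of Theorem~\ref{thFormuleSplicing} to internal degree~$4$ using the low-degree expansion of $Z^\wheel(K_i)$ (the paper records this expansion via Lemma~\ref{lem:calculs} and the lemma immediately following it, both resting on Kricker's Theorem~\ref{thKricker}), then read off the coefficient of $\tetatwo$ coming from $\omega$, from each $\partial_{D_i}(Z^\wheel(K_i))$ individually, and from the cross-pairing of the two $\omega_2$-pieces. One small simplification relative to your last paragraph: since $\tetatwo$ and $\teta\,\teta$ are linearly independent basis diagrams in $\A(\emptyset)$, you can compute the coefficient of $\tetatwo$ directly, without first assembling the full degree-$4$ part and then subtracting the $\teta\,\teta$-contribution via Proposition~\ref{PropFujita}.
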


\begin{example}
For a standard splicing, we have 
$\lambda_2(M)=\lambda_2(M_1)+\lambda_{2}(M_2)+ \frac 1{8}\Delta_{K_1}''\!(1)\,\Delta_{K_2}''\!(1)$.
We note that the same non-additivity formula is satisfied for the ``SU(3) Casson invariant'' 
in the case when $K_1$ and $K_2$ are torus knots in $S^3$ \cite{BH},
but the relation between this gauge-theoretical invariant and finite-type invariants does not seem to be known yet.
\end{example}

\begin{remark}
In the case $(M_1,K_1):=(S^3,\operatorname{unknot})$, Proposition \ref{PropDegFour} specializes 
to Ito's rational surgery formula for $\lambda_2$ --- see \cite[Theorem 1.2]{Ito}.
\end{remark}

To deduce from Theorem~\ref{thFormuleSplicing} the above splicing formulas for $\lambda_{\operatorname{W}}$ and $\lambda_2$,
we need first to determine the low-degree terms of the wheeled Kontsevich--LMO invariant $Z^\wheel$. The following result of Kricker (consequence of \cite[Theorem 1.0.8]{Kri}) gives the ``one-loop'' part of $Z^\wheel$. 
Kricker indeed works with a knot in {an integral homology $3$--sphere,} 
but the whole article adapts in the case of a null-homologous knot in a $\Q$HS, 
with the above normalization of the Alexander polynomial.

\begin{theorem}[Kricker] \label{thKricker}
 Let $K$ be a null-homologous knot in a $\Q$HS, with framing given by the preferred parallel. Then the Kontsevich--LMO invariant of $K$ writes $\chi^{-1}Z(K)=\operatorname{Wh}(K)\,\exp(R)$ where $R$ is a $\Q$--linear combination of connected Jacobi diagrams 
 with at least two loops and
 $$
 \operatorname{Wh}(K) := \Omega\exp\left( -\frac 12\log\left(\Delta_K(e^h)\right) \Big\vert_{h^{2m}\to\omega_{2m}}\right).
 $$
\end{theorem}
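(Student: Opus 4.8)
The plan is to reduce the statement to Kricker's theorem (\cite[Theorem 1.0.8]{Kri}) for knots in integral homology $3$--spheres and then carefully track how each ingredient changes when we replace $S^3$ (or an integral homology sphere) by a general $\Q$HS $M$ with a null-homologous knot $K$. Kricker's result asserts precisely that $\chi^{-1}Z(K)$, the symmetrized Kontsevich integral, factors as a ``wheels'' contribution $\operatorname{Wh}(K)$ times the exponential of a series $R$ supported in internal degree at least two (i.e.\ on connected diagrams with at least two loops), and that the one-loop (wheel-only) part is governed by the Alexander polynomial via the substitution $h^{2m}\mapsto \omega_{2m}$. The genuine mathematical content that I would need to re-examine is not the formal structure of the factorization but the fact that the \emph{loop filtration} and the \emph{rationality/Alexander} identification survive verbatim for a null-homologous knot in an arbitrary $\Q$HS.

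First I would recall the setup on the source side: $Z(M,K)\in\A(\circlearrowleft_k)$ is the Kontsevich--LMO invariant, and $\chi^{-1}$ carries it to $\A(*_k)$, where the loop grading (number of independent cycles in a Jacobi diagram) is well defined. The claim splits into two parts. The one-loop part: after discarding diagrams with two or more loops, what remains is a product of wheels $\omega_{2m}$, and the generating function of their coefficients is $-\tfrac12\log\Delta_K$. The higher-loop part: everything else assembles into $\exp(R)$ with $R$ connected and at least two-loop. The second part is essentially formal once one knows that the zero-loop part is trivial (which holds because $K$ is framed by its preferred parallel, so $Z^\wheel(K)$ has no strut, a fact already used in the proof of Theorem~\ref{thFormuleSplicing}) and that the one-loop part exponentiates cleanly; I would argue this by the standard grouplike/primitive-element reasoning for the Kontsevich integral, which is insensitive to the ambient manifold.

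The substantive step, and the main obstacle, is the one-loop computation in the $\Q$HS setting. For knots in integral homology spheres Kricker identifies the one-loop part with $\Delta_K$ through the loop expansion and the equivariant (Kricker--Garoufalidis) rationality theorem. To port this to a null-homologous knot $K\subset M$ in a $\Q$HS, I would use the normalization of $\Delta_K$ fixed in the statement, namely $\Delta_K(t^{-1})=\Delta_K(t)$ and $\Delta_K(1)=1$, following \cite[Theorem 1.5]{Mou}; the normalization $\Delta_K(1)=1$ is exactly what makes the substitution $h^{2m}\mapsto\omega_{2m}$ well defined (no constant-term ambiguity) and matches the wheels appearing in $\Omega$. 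The key point to verify is that the infinitesimal/one-loop piece of the Kontsevich--LMO invariant still records the Alexander polynomial: this follows because the one-loop part is detected by the Casson--Walker--Lescop type abelian invariants, which for a null-homologous knot depend only on the Blanchfield/Alexander data of its exterior, and this data is intrinsic to $X=M\setminus\Int(\neigh(K))$ and does not see the filling $M$ beyond the null-homology condition.

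Concretely, I would carry out the steps in the following order. \textbf{Step 1.} Recall Kricker's factorization and loop filtration on $\A(*_k)$, and note that the preferred-parallel framing forces the zero-loop (strut) part to vanish. \textbf{Step 2.} Prove that the full invariant is grouplike, so that it equals the exponential of its connected (primitive) part; segregating by loop number then yields $\chi^{-1}Z(K)=\exp(\text{one-loop})\cdot\exp(R)$ with $R$ at least two-loop. \textbf{Step 3.} Identify $\exp(\text{one-loop})$ with $\operatorname{Wh}(K)=\Omega\exp\!\big(-\tfrac12\log\Delta_K(e^h)\big|_{h^{2m}\to\omega_{2m}}\big)$ by matching the wheel coefficients against the Alexander polynomial, using that $\Omega=\exp(\sum b_{2m}\omega_{2m})$ supplies the unknot's contribution and that the remaining wheels are governed by $\Delta_K$. \textbf{Step 4.} Check that every input (loop filtration, rationality identification, Alexander normalization) is invariant under replacing the integral homology sphere by a general $\Q$HS, invoking \cite[Theorem 1.5]{Mou} for the normalization and a Mayer--Vietoris/Blanchfield argument to confirm the one-loop data is determined by the knot exterior. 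The hard part is Step 4: making rigorous that Kricker's rationality and one-loop identification, originally proved over $\Z$HS, extend to the $\Q$HS case without a change in the wheel-coefficient formula. I expect this to require either a direct re-run of Kricker's argument with the $\Q$-coefficient Alexander module, or a reduction that embeds the $\Q$HS situation into the integral one up to controlled corrections that do not affect the one-loop part.
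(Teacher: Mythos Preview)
The paper does not give a proof of this statement; it is attributed to Kricker as a consequence of \cite[Theorem~1.0.8]{Kri}, with only the remark that his argument, written for knots in integral homology $3$--spheres, adapts verbatim to null-homologous knots in a $\Q$HS once the Alexander polynomial is normalized as in \cite[Theorem~1.5]{Mou}. Your proposal follows exactly this line of reasoning, spelling out the adaptation (loop filtration, grouplike property, Alexander identification) in more detail than the paper itself provides.
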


In the computations to come, it will be more convenient to use the Conway normalization 
$$
\nabla_K(z)=1+\sum_{m>0}a_{2m}(K)z^{2m}
$$ of the Alexander polynomial of a knot $K$, given by {$\nabla_K(t-t^{-1})=\Delta_K(t^2)$.
In particular, derivating four times this equality gives}
\begin{equation} \label{valuesDelta}
 \Delta_K''(1)=2a_2(K)\quad\textrm{ and }\quad\Delta_K^{(4)}(1)=24(a_2(K)+a_4(K)). 
\end{equation}
 
\begin{lemma}  \label{lem:calculs}
Let $K$ be a null-homologous knot in a $\Q$HS $M$, and let $a_{2i}:=a_{2i}(K)$ with $i\geq 1$ be the coefficients of the Alexander--Conway polynomial of $K$. Then 
 \begin{align*}
  \chi^{-1}Z(K)\equiv_6\ & \emptyset+ \frac{\lambda_{\operatorname{W}}(M)}{4}\teta+\lambda_2(M)\tetatwo+\frac{\lambda_{\operatorname{W}}(M)^2}{32}\teta\teta+\left(b_2-\frac 12a_2\right)\tetacup kk \\
  & +\frac{\lambda_{\operatorname{W}}(M)}{4}\left(b_2-\frac 12a_2\right)\teta\tetacup kk+\frac 12\left(b_2-\frac 12a_2\right)^2\tetacup kk\tetacup kk\\
  & +v(K)\tetaleg k+\left(b_4-\frac 1{24}a_2+\frac 14a_2^2-\frac 12a_4\right)\omegafour k
 \end{align*}
where  $b_2,b_4$ are the first  modified Bernoulli numbers \eqref{valuesb}.
\end{lemma}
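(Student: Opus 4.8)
The plan is to feed Kricker's Theorem~\ref{thKricker} into the product structure of $\A(*_k)$ and read off the coefficients degree by degree, using the \emph{loop number} as the main bookkeeping device. Writing $\chi^{-1}Z(K)=\operatorname{Wh}(K)\exp(R)$, the factor $\operatorname{Wh}(K)$ is an exponential of even wheels $\omega_{2m}$ (each a $1$--loop diagram), while $R$ is a combination of connected diagrams with at least two loops. Since $\omega_2$ and $\omega_4$ are $1$--loop, they can only come from $\operatorname{Wh}(K)$; since $\teta$, $\tetatwo$ and $\tetalegs$ are connected with $\geq 2$ loops, they can only come from $R$; and a disconnected diagram such as $\teta\,\tetacup kk$ can only arise as a cross term in the product. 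This dichotomy tells me in advance which factor produces each diagram in the statement, so the proof reduces to computing the wheel part $\operatorname{Wh}(K)$, the closed ($\geq 2$--loop) part, and the single mixed coefficient $v(K)$.

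First I would compute $\operatorname{Wh}(K)$ to internal degree $4$. Using the Conway normalization $\nabla_K(t-t^{-1})=\Delta_K(t^2)$ one has $\Delta_K(e^h)=\nabla_K\big(2\sinh(h/2)\big)$, and since $2\sinh(h/2)=h+\tfrac1{24}h^3+\cdots$ this expands as $\Delta_K(e^h)=1+a_2h^2+\big(\tfrac1{12}a_2+a_4\big)h^4+O(h^6)$. Hence $-\tfrac12\log\Delta_K(e^h)=-\tfrac{a_2}{2}h^2+\big(-\tfrac{a_2}{24}-\tfrac{a_4}{2}+\tfrac{a_2^2}{4}\big)h^4+O(h^6)$; substituting $h^{2m}\to\omega_{2m}$ and multiplying by $\Omega=\exp(\sum_m b_{2m}\omega_{2m})$ from \eqref{eq:Omega}, the wheels commute, so $\operatorname{Wh}(K)=\exp\big((b_2-\tfrac12a_2)\omega_2+(b_4-\tfrac1{24}a_2-\tfrac12a_4+\tfrac14a_2^2)\omega_4+\cdots\big)$. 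Expanding this exponential modulo degree $6$ and labelling the legs by $k$ produces exactly the three wheel terms $(b_2-\tfrac12a_2)\tetacup kk$, $\tfrac12(b_2-\tfrac12a_2)^2\tetacup kk\tetacup kk$ and $(b_4-\tfrac1{24}a_2+\tfrac14a_2^2-\tfrac12a_4)\omegafour k$ of the statement.

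Next I would pin down the $\geq 2$--loop contributions. The leg-free part of $\chi^{-1}Z(K)$ is the LMO invariant $Z(M)$ of the ambient manifold: forgetting the knot amounts to discarding all diagrams carrying $k$--legs, and the closed diagrams of the Aarhus integral do not interact with the knot component. As the leg-free part of $\operatorname{Wh}(K)$ is just $\emptyset$, the closed part of $\exp(R)$ equals $Z(M)$, which by \eqref{eq:LowDeg} is $\emptyset+\tfrac{\lambda_{\operatorname{W}}}4\teta+\lambda_2\tetatwo+\tfrac{\lambda_{\operatorname{W}}^2}{32}\teta\teta$ modulo degree $6$; matching the grouplike expansion of $\exp(R)$ forces the $\teta$-- and $\tetatwo$--coefficients of $R$ to be $\tfrac{\lambda_{\operatorname{W}}}4$ and $\lambda_2$ (the $\teta\teta$ term is then automatic). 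The one remaining mixed diagram of internal degree $\le 4$ is the connected two-loop, two-leg diagram $\tetalegs$, whose coefficient is $v(K)$ by definition; any other connected legged diagram of internal degree $\le 4$ reduces to these via the $\mathrm{AS}$ and $\mathrm{IHX}$ relations, and wheels of odd length vanish by $\mathrm{AS}$. Finally, using the parity relation $\#\{\text{legs}\}\equiv\#\{\text{trivalent vertices}\}\ (\mathrm{mod}\ 2)$ together with the fact that $\chi^{-1}Z(K)$ carries only an even number of $k$--legs (a symmetry of the Kontsevich--LMO invariant of a single knot), all contributions of odd internal degree ($1$, $3$, $5$) vanish.

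It then remains to assemble the product. The closed diagrams give $\tfrac{\lambda_{\operatorname{W}}}4\teta+\lambda_2\tetatwo+\tfrac{\lambda_{\operatorname{W}}^2}{32}\teta\teta$; the wheel part gives the three terms of the second paragraph; the single cross term $\teta\,\tetacup kk$ arises from the $\teta$ of $\exp(R)$ (coefficient $\tfrac{\lambda_{\operatorname{W}}}4$) against the $\omega_2$ of $\operatorname{Wh}(K)$ (coefficient $b_2-\tfrac12a_2$), giving $\tfrac{\lambda_{\operatorname{W}}}4(b_2-\tfrac12a_2)\teta\,\tetacup kk$; and the connected mixed term contributes $v(K)\tetaleg k$. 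The loop-number dichotomy guarantees there is no further source for any of these diagrams (e.g.\ $\tetacup kk\tetacup kk$ and $\omegafour k$ come only from $\operatorname{Wh}(K)$ since $\omega_2,\omega_4$ are $1$--loop, and $\teta\,\tetacup kk$ has no non-cross source). I expect the main obstacle to be the justification that the leg-free part of $\chi^{-1}Z(K)$ equals $Z(M)$ together with the attendant completeness argument --- that every diagram of internal degree $\le 4$ occurring is one of those listed and receives contributions from exactly the indicated factors; the Conway expansion of the second paragraph is routine by comparison.
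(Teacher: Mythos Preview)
Your approach matches the paper's: both invoke Kricker's theorem, compute $\log\operatorname{Wh}(K)$ from the Conway expansion of $\Delta_K(e^h)$, identify the purely trivalent part with $Z(M)$ via \eqref{eq:LowDeg}, take the $\tetalegs$--coefficient as $v(K)$ by definition, and assemble using group-likeness. Your loop-number bookkeeping is just a rephrasing of what the paper does more tersely by first listing a basis of the connected strut-less part of $\A(*_k)$.

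One step does not hold up as written. To kill the odd internal degrees you appeal to ``the fact that $\chi^{-1}Z(K)$ carries only an even number of $k$--legs (a symmetry of the Kontsevich--LMO invariant of a single knot)''. But in the symmetrized picture the change $k\to -k$ corresponds exactly to reversing the knot orientation --- the paper itself uses this, cf.\ the line ``since $Z^\wheel(-K_2)$ is obtained from $Z^\wheel(K_2)$ by the change $k_2\to -k_2$'' in the proof of Theorem~\ref{thFormuleSplicing}. Hence your even-legs claim amounts to $Z(M,K)=Z(M,-K)$, and whether the Kontsevich integral distinguishes a knot from its reverse is a well-known \emph{open} problem, not a theorem. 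The paper sidesteps this entirely by asserting (as ``well-known and easily verified from the AS and IHX relations'') that $\{\emptyset,\teta,\omega_2,\tetatwo,\tetalegs,\omega_4\}$ is a basis of the connected strut-less part of $\A(*_k)$ through degree~$5$; in particular there are no nonzero connected strut-less diagrams of odd degree $\le 5$ in a single colour. (Sample checks: any uni-coloured tree dies because a leaf trivalent vertex has two identically labelled legs and AS forces it to equal its negative; odd wheels $\omega_{2m+1}$ vanish by AS; the one-legged $\theta$ dies by the involution swapping the two vertices of $\theta$.) Replace your symmetry claim by this basis statement and the proof goes through.
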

\begin{proof}
 It is well-known (and easily verified from the AS and IHX relations) that the following diagrams constitute a basis of the ``strut-less'' \emph{and} ``connected'' part of $\A(*_k)$ up to degree~$5$:
 $$
 \emptyset, \quad \teta\ ,\ \tetacup{k}{k}, \quad  \ \tetatwo\ , \ \tetaleg k\ , \omegafour{k}
 $$
 Hence, by the group-like property of $\chi^{-1}Z(K)$ and the fact that its purely-trivalent part reduces to $Z(M)$, it suffices to show that its connected one-loop part equals
 $$
 \left(b_2-\frac 12a_2\right)\tetacup kk +\left(b_4-\frac 1{24}a_2+\frac 14a_2^2-\frac 12a_4\right)\omegafour k
 $$
 up to degree $5$.
 We have
 $$
 \Delta_K\big(e^h\big)=\nabla_K\big(e^{\frac h2}-e^{-\frac h2}\big)
 = 1+a_2h^2+\left(\frac{a_2}{12}+a_4\right)h^4+ (\deg \geq 6)
 $$
 thus
 $$
 \log\left(\Delta_K(e^h)\right)=a_2h^2+\left(\frac{a_2}{12}-\frac{a_2^2}2+a_4\right)h^4+ (\deg \geq 6)
 $$
 and we deduce that 
 $$
 \log \operatorname{Wh}(K)\equiv_6\left(b_2-\frac 12a_2\right)\omega_2
 +\left(b_4-\frac 1{24}a_2+\frac 14a_2^2-\frac 12a_4\right)\omega_4.
 $$
 We conclude thanks to Theorem~\ref{thKricker}.
\end{proof}

\begin{lemma} 
Let $K$ be a null-homologous knot in a $\Q$HS $M$, and let $a_{2i}:=a_{2i}(K)$ with $i\geq 1$ be the coefficients of the Alexander--Conway polynomial of $K$. Then 
 \begin{align*}
  Z^\wheel(K)\equiv_6\ & \emptyset+ \frac{\lambda_{\operatorname{W}}(M)}{4}\teta+\left(a_2b_2-2b_2^2+\lambda_2(M)\right)\tetatwo
  +\frac{\lambda_{\operatorname{W}}(M)^2}{32}\teta\teta \\
  & +\frac{\lambda_{\operatorname{W}}(M)}{4}\left(b_2-\frac 12a_2\right)\teta\tetacup kk+\frac 12\left(b_2-\frac 12a_2\right)^2\tetacup kk\tetacup kk\\
  & +\left(b_2-\frac 12a_2\right)\tetacup kk+v(K)\tetaleg k+\left(b_4-\frac 1{24}a_2+\frac 14a_2^2-\frac 12a_4\right)\omegafour k.
 \end{align*}
\end{lemma}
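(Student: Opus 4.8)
The plan is to read off $Z^\wheel(K)$ directly from the expression for $\chi^{-1}Z(K)$ already established in Lemma~\ref{lem:calculs}, by applying the operator $\partial_{\Omega_k}^{-1}$ dictated by the definition $Z^\wheel(K)=\partial_{\Omega_k}^{-1}\chi^{-1}(Z(K))$. The first step is to put this operator in a shape suitable for degree bookkeeping. Using the standard fact that the assignment $D\mapsto\partial_D$ sends the disjoint--union product of $\A(*_k)$ to composition of operators (so that $\partial_{DD'}=\partial_D\partial_{D'}$ and $\partial_\emptyset=\mathrm{id}$), together with $\Omega=\exp\big(\sum_{m\ge1}b_{2m}\omega_{2m}\big)$ from \eqref{eq:Omega}, one gets $\partial_{\Omega_k}^{-1}=\partial_{\Omega_k^{-1}}$ with $\Omega_k^{-1}=\exp\big(-\sum_{m\ge1}b_{2m}\omega_{2m}\big)$, and hence
$$
\partial_{\Omega_k}^{-1}=\exp\Big(-\sum_{m\ge1}b_{2m}\,\partial_{\omega_{2m}}\Big).
$$

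The second step is a degree count. The operator $\partial_{\omega_{2m}}$ glues all $2m$ legs of the wheel $\omega_{2m}$ onto $2m$ of the $k$--legs of the target; it therefore raises the internal degree by $2m$ and vanishes on any diagram carrying fewer than $2m$ legs. Among the terms of $\chi^{-1}Z(K)$ listed in Lemma~\ref{lem:calculs}, the only ones bearing $k$--legs are $\tetacup kk$ (internal degree $2$) and $\teta\tetacup kk$, $\tetacup kk\tetacup kk$, $\tetaleg k$, $\omegafour k$ (all of internal degree $4$). Working modulo internal degree $6$, any application of a $\partial_{\omega_{2m}}$ to one of the degree--$4$ diagrams yields degree $\ge 6$ and is discarded, while $\partial_{\omega_4}$ kills $\tetacup kk$ for lack of legs. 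Thus the only contribution beyond the identity comes from the single term $-b_2\,\partial_{\omega_2}$ applied to $(b_2-\tfrac12a_2)\tetacup kk$.

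The third step is the one explicit gluing $\partial_{\omega_2}\big(\tetacup kk\big)=2\,\tetatwo$. Here one glues the two legs of the wheel $\omega_2$ to the two legs of $\tetacup kk$ (which is itself a two--spoke wheel); the two matchings of legs agree under the symmetry of $\omega_2$, and each produces the diagram obtained by joining two bigons with two edges, which is exactly $\tetatwo$. I regard checking this diagrammatic identity --- including the combinatorial factor $2$ and the absence of any AS/IHX sign --- as the main (and essentially the only) obstacle of the proof. Granting it, the surviving correction is $-b_2(b_2-\tfrac12a_2)\cdot2\,\tetatwo=(a_2b_2-2b_2^2)\,\tetatwo$.

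Finally, I would add this correction to the coefficient $\lambda_2(M)$ of $\tetatwo$ in $\chi^{-1}Z(K)$, while every other term of Lemma~\ref{lem:calculs} is left untouched (each being either leg--free or already of degree $4$). This turns the $\tetatwo$--coefficient into $a_2b_2-2b_2^2+\lambda_2(M)$ and reproduces the claimed expression for $Z^\wheel(K)$ modulo degree $6$.
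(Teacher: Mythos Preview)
Your proof is correct and follows essentially the same route as the paper's own argument: truncate $\Omega^{-1}$ to $\emptyset-b_2\omega_2$ modulo degree~$3$, observe that in the strut-less expansion of $\chi^{-1}Z(K)$ from Lemma~\ref{lem:calculs} the only term on which $\partial_{\omega_2}$ acts nontrivially below degree~$6$ is the $\omega_2$--term, and use $\partial_{\omega_2}(\omega_2)=2\,\tetatwo$ to obtain the sole correction $(a_2b_2-2b_2^2)\,\tetatwo$. The paper compresses these steps into a single displayed line, but the content is identical.
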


\begin{proof}
We have $\Omega^{-1} \equiv_3 \emptyset - b_2 \omega_2 $.
Since $\chi^{-1} Z(K)$ is strut-less, we have
$$
Z^\wheel(K) = \partial_{\Omega^{-1}}\big(\chi^{-1} Z(K)\big) \equiv_6
\chi^{-1} Z(K) -2b_2 \big(\hbox{coef. of $\omega_2$ in } \chi^{-1} Z(K)\big) \tetatwo
$$
and we conclude with Lemma \ref{lem:calculs}.
\end{proof}

We have now gathered all we need to prove the splicing formulas for $\lambda_{\operatorname{W}}$ and $\lambda_2$.

\begin{proof}[Proof of Proposition~\ref{PropFujita}]
 In the equality of Theorem~\ref{thFormuleSplicing}, the coefficient of $\theta$ in the left hand side 
 is $\lambda_{\operatorname{W}}(M)/4$, while the coefficient of $\theta$ in the right hand side is 
 \begin{eqnarray*} 
 && \frac 1{48}\left(\frac{s+p}r-S\left(\frac pr\right)\right)+\frac 14\lambda_{\operatorname{W}}(M_1)+\frac 14\lambda_{\operatorname{W}}(M_2)\\
 &&-\frac pr\left(b_2-\frac 12 a_2(K_1)\right) -\frac sr\left(b_2-\frac 12 a_2(K_2)\right).
 \end{eqnarray*}
 We conclude by substituting the values \eqref{valuesb} and \eqref{valuesDelta}.
\end{proof}

\begin{proof}[Proof of Proposition~\ref{PropDegFour}]
 We compute the coefficient of $\tetatwo$ in the right hand side  of the equality in Theorem~\ref{thFormuleSplicing}.
 First of all, the contribution of $\omega$ is $2b_2^2$. 
 Next, $\partial_{D_i}\big(Z^\wheel(K_i)\big)$ contributes in two ways to this coefficient:
 there is an ``invidual'' contribution through its $\tetatwo$ part, 
 and there is a ``mutual'' contribution  through its $\tetacup{}{}$ part.
 On the one hand,  the  individual contribution of  $\partial_{D_i}\big(Z^\wheel(K_i)\big)$ is
 $$
 a_2b_2-2b_2^2+\lambda_2(M_i)-\frac trv(K_i)+\frac{t^2}{ r^2}\left(b_2-\frac 12 a_2\right)^2
 +\frac 52\frac{t^2}{r^2}\left(b_4-\frac 1{24}a_2+\frac 14 a_2^2-\frac 12 a_4\right)
 $$
 where $a_\ell := a_\rho(K_i)$, $t := p$ if $i=1$ and $t:=s$ if $i=2$
 (to compute the sub-contribution resulting from the $\tetafour$ term of $Z^\wheel(K_i)$, we use an IHX relation that gives $\tetatwo=2\dfour$).
 On the other hand, the ``mutual'' contribution of $\partial_{D_1}\big(Z^\wheel(K_1)\big)$ and $\partial_{D_2}\big(Z^\wheel(K_2)\big)$ is
 $$\frac 2{r^2}\left(b_2-\frac 12 a_2(K_1)\right)\left(b_2-\frac 12 a_2(K_2)\right).$$
 Summing all these contributions and substituting the values \eqref{valuesb} and \eqref{valuesDelta}, we obtain the desired formula for $\lambda_2(M)$.
\end{proof}

\section{Splicing and satellite operations}
\label{sec:links}

The following generalization of Theorem \ref{thFormuleSplicing} is proved exactly in the same way.

\begin{theorem} \label{th:links}
 Let $K_1 \subset M_1$ and $K_2 \subset M_2$ be null-homologous knots in $\Q$HS 
 and let $M$ be the splice of $(M_1,K_1)$ and $(M_2,K_2)$  as described by \eqref{eq:M_bis} in terms of four integers $p,q,r,s$, 
 assuming that $r\neq0$.
 For each $i\in \{1,2\}$, let $L_i \subset M_i$ be a framed link disjoint from $K_i$ and let 
 $$
 Z^{\wheel k_i}(M_i,K_i\sqcup L_i) := \partial_{\Omega_{k_i}}^{-1} \chi^{-1}_{k_i} Z(M_i,K_i \sqcup L_i)
 $$
 be the Kontsevich--LMO invariant of $(M_i,K_i \sqcup L_i)$ ``wheeled'' at $K_i$. Then
 \begin{eqnarray*}
 Z(M,L_1 \sqcup L_2)&=& \omega\, \exp\left(\frac{\theta}{48}\left({-S\left(\frac pr\right)+\frac{p+s}r}\right)\right) \cdot\\
 & &\cdot \left\langle \partial_{D_1}\big(Z^{\wheel k_1}(M_1,K_1 \sqcup L_1)\big)\big\vert_{k_1\to {-k_{2}}/{r}}\, 
  ,\, \partial_{D_2}\big(Z^{\wheel k_2}(M_2,K_2\sqcup L_2)\big) \right\rangle
 \end{eqnarray*}
 where
 $$
 D_1 := \exp\Bigg(-\frac{p}{2r}\cups{k_1}{k_1}\Bigg) \quad \hbox{and} \quad 
 D_2 := \exp\Bigg(-\frac{s}{2r}\cups{k_2}{k_2}\Bigg).
 $$
\end{theorem}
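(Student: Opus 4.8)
The plan is to repeat the proof of Theorem~\ref{thFormuleSplicing} essentially verbatim, carrying the links $L_1$ and $L_2$ along as inert ``spectators'' throughout. The key observation is that $L_1 \sqcup L_2$ is never surgered: it contributes only external univalent vertices, colored by the components of $L_1$ and $L_2$, which the formal Gaussian integration leaves untouched. Thus every displayed identity in the proof of Theorem~\ref{thFormuleSplicing} remains valid after replacing $Z^\wheel(K_i)$ by $Z^{\wheel k_i}(M_i,K_i \sqcup L_i)$ and after reinterpreting each pairing and each $\partial$--operation as acting on the $k_i$--colored vertices only.

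First I would fix a matrix decomposition as in~\eqref{eq:decomposition_f}, producing integers $a_1,\dots,a_n$, and invoke Proposition~\ref{prop:splice} to present $M$ as the surgery $(M_1 \sharp M_2)_{K_1 \sqcup H \sqcup \epsilon_n K_2}$; the links $L_1 \subset M_1$ and $L_2 \subset M_2$ survive this surgery as $L_1 \sqcup L_2 \subset M$. Next I would apply the connected--sum formula \cite[Lemma~3.9]{BL}: since the two connected sums are performed inside the tubular neighborhoods of $K_1, H', H''$ and $K_2$, which are disjoint from $L_1$ and $L_2$, the link legs are untouched, and applying the formula with wheeling carried out at every surgery component but \emph{not} at $L_i$ delivers $Z^{\wheel k_i}(M_i, K_i \sqcup L_i)$ as the factor coming from $M_i$, just as $Z^\wheel(K_i)$ arose before. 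The value of $Z^\wheel$ on the Hopf chain $\widetilde H$ is unchanged \cite[Prop.~4.3]{BL}, and the surgery formula \cite[Eq.~(22)]{BL} expresses $Z(M, L_1 \sqcup L_2)$ as a formal Gaussian integral over the surgery variables $k_1,h_1,\dots,h_n,k_2$ only. Crucially, $L_1 \sqcup L_2$ consists of non--surgery components, so the linking matrix is the same tridiagonal matrix $\Lambda$ as in Theorem~\ref{thFormuleSplicing}, invertible if and only if $r \neq 0$.

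It then remains to perform the integration. Because each $K_i$ is null--homologous and carries its preferred parallel, $Z^{\wheel k_i}(M_i,K_i \sqcup L_i)$ has no $k_i$--$k_i$ strut and $\Lambda_{k_i k_i}=0$; hence $\Lambda$ is precisely the non--degenerate quadratic form to be inverted, and I would determine its four relevant corners using Proposition~\ref{prop:BL} and the computation in Appendix~\ref{app:tri} exactly as before. Performing the Gaussian integration, simplifying the signature term via Theorem~\ref{th:KM}, and rewriting the result using the operators $\partial_{D_1},\partial_{D_2}$ together with the substitution $k_1 \to -k_2/r$ then yields the stated formula, now with the link--carrying invariants in place of $Z^\wheel(K_i)$.

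The one genuinely new point to check --- and the only possible obstacle --- is that the external legs $l_1, l_2$ are truly inert. The subtlety is that $Z^{\wheel k_i}(M_i, K_i \sqcup L_i)$ may carry $k_i$--$l_i$ struts (measuring $\lk(K_i,L_i)$) and $l_i$--$l_i$ struts (from the framing of $L_i$), none of which is present when $L_i=\emptyset$. I would verify that this causes no difficulty: only struts with \emph{both} ends among the integration variables $k_1,h_1,\dots,h_n,k_2$ enter the quadratic form $\Lambda$, whereas any strut with an $l_i$--end belongs to the integrand remainder and is glued through $\Lambda^{-1}$ in the same manner as every other leg. Consequently the operations $\partial_{D_i}$, the substitution, and the final pairing act solely on the $k$--colored vertices and leave the $l$--colored vertices free, so the output lands in the space of diagrams on the skeleton of $L_1 \sqcup L_2$ and all link contributions are reproduced correctly. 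This is exactly what is meant by saying the theorem ``is proved in the same way''.
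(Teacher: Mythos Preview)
Your proposal is correct and matches the paper's approach exactly: the paper's entire proof of Theorem~\ref{th:links} is the single sentence ``is proved exactly in the same way'' (as Theorem~\ref{thFormuleSplicing}). Your write-up supplies precisely the details that justify this claim, including the one point worth making explicit---that the $l_i$--colored legs (and any $k_i$--$l_i$ or $l_i$--$l_i$ struts) are spectators for the Gaussian integration over $k_1,h_1,\dots,h_n,k_2$---so nothing further is needed.
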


\begin{example}
For a standard splice, identifying the symbols $k_1$ and $k_2$, we get
\begin{equation} \label{eq:standardlink}
 Z(M,L_1 \sqcup L_2)= \omega\,  \big\langle  Z^{\wheel k_1}(M_1,K_1 \sqcup L_1)\,  ,\, Z^{\wheel k_2}(M_2,K_2\sqcup L_2)  \big\rangle.
\end{equation}
\end{example}

Let $P$ be a framed link in the solid torus $S^1 \times D^2$ and let $L$ be a framed knot in an oriented $3$--manifold $N$.
We  identify $S^1 \times D^2$ with the tubular neighborhood $\neigh(L)$   so that $S^1 \times \{0\}$ and $S^1 \times \{1\}$  
correspond to the knot $L$ and its parallel $\rho(L)$, respectively.
The \emph{$P$--satellite} of $L$ is the  image of $P$ in $N$ by this  identification $S^1 \times D^2 \cong \neigh(L)$; this framed link is denoted by
$$L_P \subset N.$$ 
It is well-known that $L_P$ can  be constructed by splicing:
identify $S^1 \times D^2$ with the exterior of the unknot~$U \subset S^3$ so that both $S^1 \times \{0\}$ and $S^1 \times \{1\}$ 
are oriented meridians of $U$; then the standard splice of  $(N,L)$ and $(S^3,U)$ is a copy of $N$
where $P \subset S^3 \setminus \neigh(U)$ corresponds to~$L_P$.
Hence the following is an immediate consequence of Theorem \ref{th:links} in the simplified version of~\eqref{eq:standardlink}.

\begin{corollary} \label{cor:satellite}
Let $L\subset N$ be a null-homologous knot in a $\Q$HS (with framing given by the preferred parallel),
and let $P\subset S^1 \times D^2$ be a framed link. Then, by identifying the symbols $l$ and $u$, we have
\begin{equation} \label{eq:satellite}
Z(N,L_P) =  \omega\,  \big\langle  Z^\wheel(N,L)\,  ,\, Z^{\wheel u}(S^3,U\sqcup P)  \big\rangle.
\end{equation}
\end{corollary}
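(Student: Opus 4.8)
The plan is to realize the pair $(N, L_P)$ as a \emph{standard} splice and then read off the answer from the simplified formula \eqref{eq:standardlink}. Recall from the discussion preceding the statement that, after identifying the solid torus $S^1 \times D^2$ with the exterior $X_U = S^3 \setminus \neigh(U)$ in such a way that $S^1 \times \{0\}$ and $S^1 \times \{1\}$ are both oriented meridians of $U$, the link $P$ is carried to a framed link sitting in $X_U$. The whole content of the corollary is that this exhibits $(N,L_P)$ as the standard splice of $(N,L)$ with $(S^3,U)$, where the auxiliary link lives entirely on the $U$--side.

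The first and only substantial step I would carry out is to check carefully that, with these identifications, the standard splice of $(N,L)$ and $(S^3,U)$ is orientation-preservingly homeomorphic to $N$ and that, under this homeomorphism, the image of $P$ is exactly the satellite $L_P$. Concretely, the standard gluing sends $\mu(L) \leftrightarrow \rho(U)$ and $\rho(L) \leftrightarrow \mu(U)$; since $X_U$ is a solid torus whose meridian disk is bounded by $\rho(U)$, matching $\mu(L)$ to $\rho(U)$ is precisely the Dehn filling that refills $\neigh(L)$, reconstructing $N$. The copy $P \subset X_U \cong \neigh(L)$ read through this filling then coincides with the copy of $P$ read through the identification $S^1 \times D^2 \cong \neigh(L)$ used to define $L_P$. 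One must also verify here that $U$ is framed by its preferred parallel, so that Theorem \ref{th:links} applies.

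Granting this identification, I would apply Theorem \ref{th:links} in its standard-splice form \eqref{eq:standardlink}, with the assignment $(M_1, K_1) := (N, L)$ and $L_1 := \emptyset$ on one side, and $(M_2, K_2) := (S^3, U)$ and $L_2 := P$ on the other. For a standard splice we have $p = s = 0$ and $q = -r = 1$, so the exponential prefactor is trivial and the operators $\partial_{D_1}, \partial_{D_2}$ reduce to the identity, exactly as in \eqref{eq:standard}; thus only the pairing against $\omega$ survives. The left-hand side $Z(M, L_1 \sqcup L_2)$ equals $Z(N, L_P)$ by the previous step, while on the right the vanishing of $L_1$ gives $Z^{\wheel k_1}(N, L \sqcup \emptyset) = Z^\wheel(N, L)$.

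It then remains only to match the labelling conventions: the wheeled variable $k_1$ is the colour $l$ of $K_1 = L$, and $k_2$ is the colour $u$ of $K_2 = U$, so the identification $k_1 \to -k_2/r = k_2$ prescribed by \eqref{eq:standardlink} at $r = -1$ is exactly the identification of the symbols $l$ and $u$. Substituting yields precisely \eqref{eq:satellite}. I expect the only genuine obstacle to be the geometric bookkeeping of the second paragraph: one must track orientations and the preferred-parallel framing of $U$ through the two successive identifications of $S^1 \times D^2$ (first with $\neigh(L)$, then with $X_U$) to be certain that the splice is genuinely standard and yields a copy of $N$ rather than some other Dehn filling. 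Everything after that is a direct specialization of Theorem \ref{th:links}.
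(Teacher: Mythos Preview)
Your proposal is correct and follows exactly the paper's approach: the paragraph immediately preceding the corollary already records the geometric identification of $(N,L_P)$ as the standard splice of $(N,L)$ and $(S^3,U)$ with $P$ on the $U$--side, and the corollary is then stated as an ``immediate consequence of Theorem~\ref{th:links} in the simplified version of~\eqref{eq:standardlink}''. Your write-up simply spells out the bookkeeping that the paper leaves implicit.
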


We  now derive from the ``satellite formula'' \eqref{eq:satellite}
a generalization of a  formula of Suetsugu~\cite{Suetsugu}.
The latter involves the extension of the Kontsevich integral
to framed links in a solid torus \cite{Suetsugu} or, equivalently, in a thickened annulus  \cite{AMR}. 
For an $n$--component framed link $P \subset S^1 \times D^2$, this  invariant  
$$
Z^\annulus(P) \in \A^\annulus(\circlearrowleft^1 \cdots \circlearrowleft^n)
$$
is valued in the space 
generated by homotopy classes of immersions from chord diagrams on $\circlearrowleft^1 \cdots \circlearrowleft^n$ to the annulus, modulo the 4T relation.

Let $D$ be a Jacobi diagram on $\circlearrowleft$ (possibly with some univalent vertices colored by a finite set $X$)
and let $E$ be a chord diagram  on $\circlearrowleft^1 \cdots \circlearrowleft^n$ in the annulus. 
One can produce from $D$ and $E$ a linear combination of Jacobi diagrams 
$$
D\vert_{\circlearrowleft \to E}
$$
on $\circlearrowleft^1 \cdots \circlearrowleft^n$ 
(possibly with some univalent vertices colored by $X$) by  the following procedure:
(1) thicken  the $1$-manifold $\circlearrowleft$ in $D$ to an annulus, replacing every univalent vertex of $D$ by a box directed in the same direction as $\circlearrowleft$; (2) put $E$ inside this annulus; (3)  apply the ``box notation'' below:
\begin{center}
\begin{tikzpicture} [scale=0.4]
\foreach \t in {0,6,12,18,26} {
\begin{scope} [xshift=\t cm,xscale=0.4]
 \foreach \x in {0,5} {\draw[thick,blue,->] (\x,0) -- (\x,2); \draw[thick,blue] (\x,2) -- (\x,4);}
 \draw[thick,blue,->] (1,4) -- (1,2); \draw[thick,blue] (1,2) -- (1,0);
 \draw (2,0) -- (2,4);
 \draw (3.6,3) node {\scalebox{0.8}{$\dots$}};
\end{scope}}
 \draw (-1.5,1) -- (0,1);
 \draw[xshift=6cm] (-1.5,1) -- (0,1);
 \draw[xshift=12cm] (-1.5,1) -- (0.4,1);
 \draw[xshift=18cm] (-1.5,1) -- (0.8,1);
 \draw[xshift=26cm] (-1.5,1) -- (2,1);
 \draw[fill=gray!60] (-0.3,0.7) -- (2.3,0.7) -- (2.3,1.3) -- (-0.3,1.3) -- (-0.3,0.7);
 \draw (-0.3,0.7) -- (1,1.3) -- (2.3,0.7);
 \draw (3.5,2) node {$:=$};
 \draw (9.6,2) node {$-$};
 \draw (15.6,2) node {$+$};
 \draw (22.5,2) node {$\pm\dots+$};
 \draw (29,2) node {.};
\end{tikzpicture}
\end{center}
Here, some pieces of chords of $E$ and  some arcs of the $1$-manifold $\circlearrowleft^1 \cdots \circlearrowleft^n$ 
(which are shown in bold blue)  go through the box,
and each of them contributes to one summand in the box notation;
an arc contributes with a $+$ or a $-$,
depending on the compatibility of its orientation with the direction
of the box; a piece of chord always contributes with a $+$, the orientation of the new trivalent vertex being determined by
the direction of the box. 
Of course, for step (2) to make sense, it is necessary to assume that (after a small homotopy) no univalent vertex of~$E$
(\emph{ie} no extremity of {any} chord) is contained in one of the boxes.
That the resulting operation 
$$
\A(\circlearrowleft, *_X) \times \A^\annulus(\circlearrowleft^1 \cdots \circlearrowleft^n) 
\longrightarrow \A(\circlearrowleft^1 \cdots \circlearrowleft^n,*_X), \quad (D,E) \longmapsto  D\vert_{\circlearrowleft \to E}
$$
is well-defined follows from the STU, {AS and IHX relations} in the target. 
Besides, it is obvious that this operation is compatible with additional ``link relations'' on $X$. 
This corresponds to the operation $\diamond$ in \cite{Suetsugu}. 

\begin{corollary}[Suetsugu]  \label{cor:satellite_bis}
Let $L\subset N$ be a null-homologous knot in a $\Q$HS (with framing given by the preferred parallel),
and let $P\subset S^1 \times D^2$ be a framed link. Then, we have
\begin{equation} \label{eq:satellite_bis}
Z(N,L_P) = \big(Z(N,L)\, \sharp\, \nu^{-1}\big)\big\vert_{\circlearrowleft \to Z^\annulus (P)}
\end{equation}
where $\sharp$ denotes the usual multiplication in $\A(\circlearrowleft) \cong \A(\uparrow)$ and $\nu=Z(\textrm{unknot})$.
\end{corollary}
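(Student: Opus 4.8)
The plan is to deduce \eqref{eq:satellite_bis} from the satellite formula \eqref{eq:satellite} of Corollary~\ref{cor:satellite} by a purely diagrammatic translation. The two formulas encode the solid--torus link $P$ differently: \eqref{eq:satellite} uses the invariant $Z^{\wheel u}(S^3,U\sqcup P)$ of $P$ together with the meridian unknot $U$, whereas \eqref{eq:satellite_bis} uses the annular invariant $Z^\annulus(P)$; moreover, \eqref{eq:satellite} pairs against the \emph{wheeled} invariant $Z^\wheel(N,L)$ through the leg--gluing $\langle-,-\rangle$, while \eqref{eq:satellite_bis} feeds the \emph{unwheeled} $Z(N,L)$, corrected by $\nu^{-1}$, into the box operation $(-)\vert_{\circlearrowleft\to-}$. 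Since both right--hand sides are universal linear operations applied to $Z(N,L)\in\A(\circlearrowleft_l)$, it suffices to prove the identity of operators $\A(\circlearrowleft_l)\to\A(\circlearrowleft^1\cdots\circlearrowleft^n)$
$$
\omega\,\big\langle\,\partial_{\Omega_l}^{-1}\chi_l^{-1}(-)\,,\,Z^{\wheel u}(S^3,U\sqcup P)\,\big\rangle
= \big((-)\,\sharp\,\nu^{-1}\big)\big\vert_{\circlearrowleft\to Z^\annulus(P)} ,
$$
which I would test on the spanning set $\{\chi_l(w): w\in\A(*_l)\}$ of $\A(\circlearrowleft_l)$.

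The crux is a \emph{dictionary lemma} matching the two encodings of $P$. Because the solid torus $S^1\times D^2$ is the exterior of $U\subset S^3$, naturality of the Kontsevich--LMO invariant under the inclusion of the annulus as this exterior shows that $Z^\annulus(P)$ and $Z(S^3,U\sqcup P)$ carry the same information: after wheeling the $U$--component, the winding of a strand of $P$ around the core of the annulus is recorded precisely by the $u$--colored legs of $Z^{\wheel u}(S^3,U\sqcup P)$. Concretely I would prove that, for every $D\in\A(\circlearrowleft_u)$, replacing $\circlearrowleft$ by the annulus containing $Z^\annulus(P)$ and resolving the boxes coincides with opening $\circlearrowleft$ into $u$--colored legs by $\chi_u^{-1}$ and gluing them to the $u$--legs of $Z^{\wheel u}(S^3,U\sqcup P)$, up to the normalizing wheel factors of \eqref{eq:wheeled_Omega}. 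This is checked leg by leg: the $+$/$-$ signs and the orientation of the trivalent vertex created by the box notation are designed to match exactly the sign with which a strand of $P$ threads $U$, and the STU relation guarantees that the two procedures agree. Alternatively, one may take as input the relation between the annular Kontsevich integral and the Kontsevich integral of $P\sqcup U$ in $S^3$ established in \cite{AMR,Suetsugu}.

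Granting the dictionary, I substitute it into \eqref{eq:satellite} and collapse the wheel factors. Writing $Z^\wheel(N,L)=\partial_{\Omega_l}^{-1}\chi_l^{-1}Z(N,L)$, the leg--gluing against $Z^{\wheel u}(S^3,U\sqcup P)$ reconstructs, through the dictionary lemma, the \emph{unwheeled} invariant $Z(N,L)$ box--inserted into $Z^\annulus(P)$: the wheel corrections $\partial_{\Omega_l}^{-1}$ and $\chi_l^{-1}$ carried by $Z^\wheel(N,L)$, the factor $\Omega_u^{-1}$ implicit in $Z^{\wheel u}(S^3,U\sqcup P)$, and the prefactor $\omega=\langle\Omega,\Omega\rangle$ are precisely what is needed to undo the wheeling and leave a single correction $\nu^{-1}$ on the circle. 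The decisive inputs here are the identity $\nu=\chi(\Omega)$ and its wheeled counterpart \eqref{eq:wheeled_Omega}. This produces the right--hand side of \eqref{eq:satellite_bis}. As a consistency check I would specialize to $(N,L)=(S^3,\mathrm{unknot})$, where $Z(N,L)\,\sharp\,\nu^{-1}$ is trivial on $\circlearrowleft$, so that \eqref{eq:satellite_bis} must reduce to the standard inclusion $Z^\annulus(P)\mapsto Z(S^3,P)$.

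The main obstacle is the dictionary lemma together with the accompanying bookkeeping of wheel factors. One must verify that the $u$--colored legs produced by wheeling $U$ correspond, with the correct signs and orientations, to the threading of $P$ through the boxes, and that the accumulated corrections $\omega$, $\partial_\Omega^{\pm1}$ and $\Omega_u^{-1}$ cancel to leave exactly $\nu^{-1}$ with no residual $\omega$. Once this normalization is pinned down, the remaining steps are a direct rewriting of \eqref{eq:satellite}.
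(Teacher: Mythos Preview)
Your strategy is exactly the paper's: start from \eqref{eq:satellite}, establish a ``dictionary'' between $Z^{\wheel u}(S^3,U\sqcup P)$ and $Z^\annulus(P)$, and then cancel the wheel factors. What you leave as an acknowledged obstacle is precisely what the paper supplies, so let me tell you what the two missing pieces look like.

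For the dictionary, the paper does not argue ``leg by leg'' via STU. Instead it writes $U\sqcup P\subset S^3$ as a tangle composition (cups, a cabled open Hopf link $H$, the tangle $Q$ whose closure is $P$, caps), invokes \cite[Theorem~4]{BL} for $Z(H)$, and uses the very definition of $Z^\annulus$ \`a la \cite{AMR} to obtain the exact formula
\[
\chi_u^{-1} Z(S^3,U\sqcup P)\;=\;\Omega_u\cdot \exp_\sharp\big(\,{}^u\!\!\!\figstrut\,\big)\Big\vert_{\circlearrowleft\to Z^\annulus(P)}.
\]
This is the precise form your ``dictionary lemma'' must take; once you have it, the pairing $\langle-,-\rangle$ against the $\exp_\sharp(\,{}^u\!\!\!\figstrut\,)$ part is literally the box operation $(-)\vert_{\circlearrowleft\to Z^\annulus(P)}$ applied to $\chi_l$ of whatever sits on the other side.

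For the bookkeeping, after inserting the formula above into \eqref{eq:satellite} you are left with $\omega\,\langle\partial_{\Omega_l}(Z^\wheel(N,L)\cdot\Omega_l^{-1}),\exp_\sharp(\,{}^l\!\!\!\figstrut\,)\vert_{\circlearrowleft\to Z^\annulus(P)}\rangle$. Now apply wheeling \cite[Theorem~3]{BL} to rewrite $\partial_{\Omega_l}(Z^\wheel(N,L)\cdot\Omega_l^{-1})$ as $\chi_l^{-1}\big(Z(N,L)\,\sharp\,(\chi_l\partial_{\Omega_l}\Omega_l)^{-1}\big)$, and use $\partial_\Omega(\Omega)=\omega\,\Omega$ \cite[Prop.~3.3]{BL} together with $\chi(\Omega)=\nu$ to get $\chi_l\partial_{\Omega_l}(\Omega_l)=\omega\,\nu$. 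The $\omega$'s cancel and only $\sharp\,\nu^{-1}$ survives, exactly as you anticipated. Your instinct about which identities are needed was right; the point is that wheeling converts the $\partial_\Omega$--twisted product in $\A(*_l)$ into the $\sharp$--product in $\A(\circlearrowleft_l)$, and that is what makes the cancellation transparent rather than a matter of chasing individual wheel diagrams.
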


\begin{proof}
 There is a tangle $Q$ in $ [-1,+1] \times D^2$   whose ``closure'' in $S^1 \times D^2$ is $P$. 
Thus, we obtain a decomposition of the following form for $U \sqcup P \subset S^3$ in the strict monoidal category of tangles:
\begin{equation} \label{eq:monoidal}
U \sqcup  P = \begin{array}{|c|} \hline \hbox{``caps''} \\ \hline \begin{array}{c|c} H & Q \end{array}\\
\hline \hbox{``cups''}\\ \hline \end{array} \,.
\end{equation}
Here ``caps'' (resp. ``cups'') denote the appropriate cabling {(with possible orientation changes)} of the elementary tangle 
\,\begin{tikzpicture} [scale=0.25]
 \draw[->] (1,0) arc (0:180:1);
\end{tikzpicture}
(resp.
\begin{tikzpicture} [scale=0.25]
 \draw[->] (-1,0) arc (-180:0:1);
\end{tikzpicture}\,), and $H$ is the ``open'' Hopf link  whose ``opened'' component has been cabled accordingly:
$$H:=\raisebox{-0.8cm}{
\begin{tikzpicture} [scale=0.3]
 \foreach \x in {-2,-1,2} {\draw (\x,-3) -- (\x,0);}
 \draw[white,line width=5pt] (0,0) circle (3 and 1);
 \draw (0,0) circle (3 and 1);
 \foreach \x in {-2,-1,2} {\draw[white,line width=5pt] (\x,0) -- (\x,3); \draw (\x,0) -- (\x,3);}
 \draw (0.5,2) node {$\dots$};
\end{tikzpicture}}$$ 
In the decomposition \eqref{eq:monoidal}, $U$ is the closed component of $H$.
By choosing a parenthesization of the top boundary points of $Q$, we can upgrade \eqref{eq:monoidal}
to a decomposition in the non-strict monoidal category of $q$--tangles. 
The value of $Z(H)$ is deduced from \cite[Theorem 4]{BL} by cabling the opened component of the ``open'' Hopf link. 
Then, using the definition of the invariant $Z^\annulus$
as it is done in \cite{AMR}, we obtain that
$$
\chi^{-1}_u Z(S^3,U \sqcup P) = 
\Omega_u \cdot \exp_\sharp\big(\,{}^u\!\!\!\figstrut \big) \big\vert_{\circlearrowleft \to Z^\annulus(P)}.
$$
Then, we deduce from formula \eqref{eq:satellite} that 
\begin{eqnarray*}
Z(N,L_P) &=&  \omega\,  \Big\langle  Z^\wheel(N,L) \cdot  \Omega_l^{-1} \,  ,\, 
\Omega_l \cdot \exp_\sharp\big(\,{}^l\!\!\!\figstrut \big) \big\vert_{\circlearrowleft \to Z^\annulus(P)}  \Big\rangle \\
&=&  \omega\,  \Big\langle  \partial_{\Omega_l}\big(Z^\wheel(N,L) \cdot  \Omega_l^{-1}\big) \,  ,\, 
 \exp_\sharp\big(\,{}^l\!\!\!\figstrut \big) \big\vert_{\circlearrowleft \to Z^\annulus(P)}  \Big\rangle 
\end{eqnarray*}
and, by  {``wheeling'' \cite[Theorem 3]{BL}}, we get
\begin{eqnarray*}
Z(N,L_P) &=& \omega\,  \Big\langle  \chi_l^{-1} \big(Z (N,L)\, \sharp \,  \chi_l \partial_{\Omega_{l}} (\Omega_l^{-1})\big) \,  ,\, 
 \exp_\sharp\big(\,{}^l\!\!\!\figstrut \big) \big\vert_{\circlearrowleft \to Z^\annulus(P)}  \Big\rangle \\
 &=& \omega\,  \Big\langle  \chi_l^{-1} \big(Z (N,L)\, \sharp\,  \big(\chi_l \partial_{\Omega_{l}} (\Omega_l)\big)^{-1}\big) \,  ,\, 
 \exp_\sharp\big(\,{}^l\!\!\!\figstrut \big) \big\vert_{\circlearrowleft \to Z^\annulus(P)}  \Big\rangle.
\end{eqnarray*}
By {\cite[Prop. 3.3]{BL}}, we have $\partial_{\Omega}(\Omega)= \omega \Omega$. Hence $\chi\partial_{\Omega}(\Omega)= \omega \nu$ and we conclude that
$$
Z(N,L_P) =  \Big\langle  \chi_l^{-1} \big(Z (N,L) \sharp \nu^{-1} \big)\,  ,\, 
 \exp_\sharp\big(\,{}^l\!\!\!\figstrut \big) \big\vert_{\circlearrowleft \to Z^\annulus(P)}  \Big\rangle
= \big(Z(N,L)\, \sharp\, \nu^{-1}\big)\big\vert_{\circlearrowleft \to Z^\annulus (P)}.
$$
\end{proof}

\begin{remark}
Formula {\eqref{eq:satellite_bis}} is stated in \cite[Theorem 2.5]{Suetsugu} for the case $N:=S^3$.
An indication of proof is  given, but the factor ``\,$\sharp \nu^{-1}$\,'' seems to be missing there.
\end{remark}

\begin{remark}
If $N:=S^3$ and $P$ is connected in Corollary \ref{cor:satellite_bis}, 
then we can view $L$  and~$P$ as one-component ``bottom tangles'' in  handlebodies of genus $0$ and $1$, respectively.
In such a case, formula {\eqref{eq:satellite_bis}} follows immediately from the functoriality
of the invariant of ``bottom tangles in handlebodies'' that is constructed in \cite{HM} as an extension of the Kontsevich integral. 
\end{remark}

\section{The case of knots that are not trivial in homology}
\label{sec:non-null}

Let  now $M_1$ and $M_2$ be $\Q$HS
and let $K_1 \subset M_1$ and $K_2 \subset M_2$ be framed knots. 
We identify the boundary of $X_i= M_i\setminus\Int(\neigh(K_i))$ with $\torus$ as in \eqref{eq:tori}
and we consider the splice 
\begin{equation} \label{eq:M_ter}
M:= X_1\, \mathop{\bigcup}_{f}\, X_2
\end{equation}
defined by the homeomorphism $f:\torus \to \torus$
that is encoded by four integers $p,q,r,s$ as in \eqref{eq:matrix_f}.
The self-linking number of $K_i$ is denoted by
\begin{equation} \label{eq:framing}
\frac{u_i}{v_i} := \lk\big(K_i,\rho(K_i)\big) \in \Q,
\end{equation}
where $u_i,v_i$ are coprime integers such that $v_i>0$.
Observe that, when $K_i$ is null-homologous and $\rho(K_i)$ is the preferred parallel, we have $u_i=0$ and $v_i=1$.

\begin{lemma} \label{lem:MV}
With the above notations, $M$ is a $\Q$HS if and only if we have
$$
q\,u_1u_2+r\,v_1v_2+s\,u_2v_1+p\,v_2u_1 \neq 0.
$$
\end{lemma}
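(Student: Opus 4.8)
The plan is to compute $H_1(M;\mathbb{Q})$ by a Mayer--Vietoris argument for the splitting $M = X_1 \cup_T X_2$ along the gluing torus $T$, and to read off the $\mathbb{Q}$HS condition as the non-vanishing of a single determinant. Since $M$ is closed, connected and oriented, it is a $\mathbb{Q}$HS exactly when $H_1(M;\mathbb{Q})=0$. In the Mayer--Vietoris sequence
\[
H_1(T;\mathbb{Q}) \xrightarrow{\ \Phi\ } H_1(X_1;\mathbb{Q}) \oplus H_1(X_2;\mathbb{Q}) \longrightarrow H_1(M;\mathbb{Q}) \longrightarrow \widetilde{H}_0(T;\mathbb{Q}),
\]
the arrow $\Phi$ is induced by the two inclusions, and (as $T$, $X_1$, $X_2$ and $M$ are connected) the map $\widetilde{H}_0(T;\mathbb{Q}) \to \widetilde{H}_0(X_1;\mathbb{Q})\oplus\widetilde{H}_0(X_2;\mathbb{Q})$ is injective, so that $H_1(M;\mathbb{Q}) \cong \operatorname{coker}\Phi$. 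Once we know that both $H_1(X_i;\mathbb{Q})$ are $1$--dimensional, $\Phi$ is a map between $2$--dimensional spaces, so $H_1(M;\mathbb{Q})=0$ if and only if $\Phi$ is an isomorphism, i.e.\ $\det\Phi\neq 0$.

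The first step is to pin down $H_1(X_i;\mathbb{Q})$. Applying Mayer--Vietoris to the decomposition $M_i = X_i \cup \neigh(K_i)$, with $\neigh(K_i)\simeq S^1$ and $H_1(M_i;\mathbb{Q})=0$, gives $H_1(X_i;\mathbb{Q})\cong \mathbb{Q}$. Because $M_i$ is a $\mathbb{Q}$HS, the knot $K_i$ is rationally null-homologous, so the linking number $\lk(K_i,-)$ defines a homomorphism $H_1(X_i;\mathbb{Q})\to\mathbb{Q}$; since it sends the meridian to $1$ (by the convention $\lk(K_i,\mu(K_i))=+1$), it is an isomorphism. Under this identification we have $[\mu(K_i)] \mapsto 1$ and, by the definition \eqref{eq:framing} of the self-linking number, $[\rho(K_i)] \mapsto u_i/v_i$.

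The second step is to compute the matrix of $\Phi$. I would use the identifications \eqref{eq:tori} of $\partial X_1$ and $\partial X_2$ with $\torus$, together with the matrix \eqref{eq:matrix_f} of $f$, to transport the basis $(\alpha,\beta)$ of $H_1(\torus;\mathbb{Q})$ to curves on each boundary torus. On the $X_1$--side one finds $\alpha = \mu(K_1)$ and $\beta = -\rho(K_1)$, whereas on the $X_2$--side the same two classes become $f_*(\alpha)=p\,\mu(K_2)+q\,\rho(K_2)$ and $f_*(\beta)=r\,\mu(K_2)+s\,\rho(K_2)$. Feeding these through the isomorphisms of the first step yields, in the chosen generators of $H_1(X_1;\mathbb{Q})\oplus H_1(X_2;\mathbb{Q})$,
\[
\Phi=\begin{pmatrix} 1 & -\dfrac{u_1}{v_1} \\ p+q\dfrac{u_2}{v_2} & r+s\dfrac{u_2}{v_2} \end{pmatrix},
\]
so that $v_1 v_2\,\det\Phi = q\,u_1u_2 + r\,v_1v_2 + s\,u_2v_1 + p\,v_2u_1$. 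Hence $\det\Phi\neq 0$ if and only if this integer is non-zero, which is the assertion; as a sanity check, in the null-homologous case $u_i=0$, $v_i=1$ it reduces to $r\neq 0$, in agreement with the criterion recorded just after \eqref{eq:M_bis}.

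The main obstacle I anticipate is purely bookkeeping: orienting everything consistently so that the entries of $\Phi$ come out correctly. One must track the sign in the identification $\partial X_1\cong\torus$, $(\mu(K_1),\rho(K_1))\mapsto(\alpha,-\beta)$, and be careful about which of the two boundary parametrizations the homeomorphism $f$ relates to which. None of these choices affects whether $\det\Phi$ vanishes --- a different convention can at most flip the sign of $\det\Phi$ --- so the resulting criterion is precisely the non-vanishing of $q\,u_1u_2+r\,v_1v_2+s\,u_2v_1+p\,v_2u_1$, independently of the conventions.
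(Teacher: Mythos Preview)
Your proof is correct and follows essentially the same Mayer--Vietoris strategy as the paper. The only cosmetic difference is that the paper identifies the kernel of each inclusion $H_1(\partial X_i;\Q)\to H_1(X_i;\Q)$ via the longitude $\ell(K_i)=-u_i\,\mu(K_i)+v_i\,\rho(K_i)$ and presents $H_1(M;\Q)$ by generators $\mu(K_2),\rho(K_2)$ and the two longitude relations, whereas you identify each $H_1(X_i;\Q)\cong\Q$ via the linking number and compute the $2\times 2$ matrix of $\Phi$ directly; the resulting determinant is the same.
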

\begin{proof}
Recall that the \emph{longitude} $\ell(K_i)$ of $K_i$ is the  oriented simple closed curve in $\partial  \neigh(K_i)$
that is homologous to  a positive multiple of $K_i$ in $\neigh(K_i)$ and  is rationally null-homologous in the exterior of $\neigh(K_i)$.
Thus we have $\lk(K_i, \ell(K_i))=0$ and we deduce from \eqref{eq:framing} that
$$
\ell(K_i) = -u_i\, \mu(K_i) +v_i\, \rho(K_i) \ \in H_1(\partial \neigh(K_i);\Z).
$$
A Mayer--Vietoris argument shows that $H_1(M;\Q)$ is generated by $\mu(K_2)$ and $\rho(K_2)$, subject to the relations
$$
-u_2\, \mu(K_2) +v_2\, \rho(K_2) =0 \quad \hbox{and} \quad (-pu_1 -rv_1)\, \mu(K_2) + (-qu_1-sv_1)\, \rho(K_2)=0
$$
which express the triviality in homology of $\ell(K_2)$ and $\ell(K_1)$, respectively. Hence $H_1(M_;\Q)=0$  if and only if 
$$
\begin{vmatrix}
-u_2 & v_2 \\
pu_1+rv_1 & qu_1+sv_1
\end{vmatrix} \neq 0
$$
\end{proof}

For any framed knot $L$ in a $\Q$HS $N$, with self-linking number $\varphi \in \Q$, we denote by 
$$
\underline{Z}(N,L) :=  Z(N,L) \sharp \exp_\sharp\Big( -\frac{\varphi}{2}\,\raisebox{-1ex}{ 
\begin{tikzpicture} [scale=0.3]
 \draw[->,thick, blue] (1,-1) -- (1,1);
 \draw (0,0) -- (1,0);
\end{tikzpicture}}\ \Big)
\ \in \A(\circlearrowleft_l) \cong \A(\uparrow^l) 
$$ 
the \emph{unframed} version of the Kontsevich--LMO invariant $Z(N,L)$ and we denote by
$$
\underline{Z}^\wheel(N,L) := \partial_{\Omega_l}^{-1} \chi^{-1}(\underline{Z}(N,L)) \ \in \A(*_l)
$$
the wheeled version of the latter. Note that $\underline{Z}^\wheel(N,L)$ does not show any strut and, by ``wheeling'' \cite[Theorem 3]{BL}, we have
\begin{equation} \label{eq:strut}
\underline{Z}^\wheel(N,L) = {Z}^\wheel(N,L) \cdot \exp\Big(-\frac{\varphi}{2} \caps{l}{l} + \frac{\varphi}{48} \teta \Big).
\end{equation}

\begin{theorem} \label{th:non-null}
For $i\in\{1,2\}$, let $K_i\subset M_i$ be a framed knot in a $\Q$HS, with self-linking  number $u_i/v_i\in \Q$, 
 and let $M$ be the splice of $(M_1,K_1)$ and $(M_2,K_2)$ as described by \eqref{eq:M_ter} in terms of four integers $p,q,r,s$. 
 If~$M$ is also a $\Q$HS,  then we have  
 $$
 Z(M)= \omega\, \exp\left(\frac{\theta \kappa}{48}  \right) \, 
 \left\langle \partial_{D_1}\big( \underline{Z}^\wheel(K_1) \big)\big\vert_{k_1\to -v_1v_2 k_2 /\lambda}\,  
 ,\, \partial_{D_2}\big( \underline{Z}^\wheel(K_2) \big) \right\rangle, 
 $$
where we have set $\lambda := q u_1u_2 + rv_1v_2 + s u_2 v_1  + pv_2u_1 $, $\tau_1 :=  qu_1 +sv_1$,  $\tau_2 :=  qu_2 +pv_2$,
\begin{eqnarray*} 
\kappa &:=& \left\{\begin{array}{ll}  S(s/q) - (s+p)/q  +3 \operatorname{sgn}(q\tau_1) +3 \operatorname{sgn}(\lambda\tau_1) -u_1/v_1-u_2/v_2 & \hbox{if } q \neq 0, \\
s \big(3 \operatorname{sgn}(\lambda) -r\big) -u_1/v_1-u_2/v_2  &  \hbox{if } q =  0, \end{array}\right.
\end{eqnarray*} 
and 
 \begin{equation}
 \label{eq:DD_bis}  
 D_1 := \exp\Bigg(-\frac{v_1 \tau_2}{2\lambda} \cups{k_1}{k_1} \Bigg), \qquad
D_2 := \exp\Bigg( -\frac{v_2 \tau_1}{2\lambda } \cups{k_2}{k_2} \Bigg).
  \end{equation}
\end{theorem}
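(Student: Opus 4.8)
The plan is to rerun the proof of Theorem~\ref{thFormuleSplicing} essentially unchanged, the only novelty being that, $\rho(K_i)$ no longer being a preferred parallel, the invariant $Z^\wheel(K_i)$ now carries a strut recording the self-linking number $u_i/v_i$. Proposition~\ref{prop:splice} makes no use of the homology classes of the $K_i$, so it still exhibits $M$ as the surgery $(M_1\sharp M_2)_{K_1\sqcup H\sqcup \epsilon_nK_2}$ along a Hopf chain $H=H(a_1,\dots,a_n)$ attached to a decomposition~\eqref{eq:decomposition_f} of the matrix~\eqref{eq:matrix_f}; the orientation convention $\epsilon_n$ and the ensuing substitution $k_2\to -k_2$ play exactly the same role as before and render the final formula independent of the parity of~$n$. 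Feeding the connected-sum behaviour of the wheeled invariant (\cite[Lemma~3.9]{BL}) and the value~\eqref{eq:dos} on Hopf chains into the surgery formula \cite[Eq.~(22)]{BL}, which applies over any $\Q$HS by \cite[Remark~1.7]{BL} and accommodates the rational self-framings of the $K_i$ through the rational linking matrix, produces a diagrammatic Gaussian integral of the same shape as in Theorem~\ref{thFormuleSplicing}.

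The single structural difference lies in the quadratic part of the integrand. Writing each factor as $Z^\wheel(K_i)=\underline Z^\wheel(K_i)\,\exp\!\big(\tfrac{u_i}{2v_i}\caps{k_i}{k_i}-\tfrac{u_i}{48v_i}\teta\big)$ by~\eqref{eq:strut}, I would take the strut-less amplitude $\underline Z^\wheel(K_1)\,\underline Z^\wheel(K_2)$ and let the struts $\tfrac{u_i}{2v_i}\caps{k_i}{k_i}$ join the clasp-and-framing struts. The governing quadratic form is then the tridiagonal matrix $\Lambda'$ obtained from the matrix $\Lambda$ of Theorem~\ref{thFormuleSplicing} by replacing its two vanishing corner entries with $u_1/v_1$ and $u_2/v_2$; its invertibility is precisely the condition $\lambda\neq0$ of Lemma~\ref{lem:MV}. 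Moreover the two $\theta$-factors $-\tfrac{u_i}{48v_i}\teta$ from~\eqref{eq:strut} combine with the term $-\tfrac1{48}\big(\sum_ia_i\big)\teta$ of~\eqref{eq:dos} to reconstitute $-\operatorname{tr}(\Lambda')$, so the $\theta$-exponent is again $\tfrac1{48}\big(3\zeta(\Lambda')-\operatorname{tr}(\Lambda')\big)\teta$, now for $\Lambda'$.

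The bulk of the computation is the linear algebra of $\Lambda'$. Forming its associated $2\times2$ matrix exactly as in Theorem~\ref{thFormuleSplicing}, but with the two end factors $\left(\begin{smallmatrix}0&-1\\1&0\end{smallmatrix}\right)$ replaced by $\left(\begin{smallmatrix}u_i/v_i&-1\\1&0\end{smallmatrix}\right)$, one finds using~\eqref{eq:decomposition_f} the matrix $\left(\begin{smallmatrix}-\tau_2/v_2&q\\ \lambda/(v_1v_2)&-\tau_1/v_1\end{smallmatrix}\right)$, with $\lambda,\tau_1,\tau_2$ as in the statement. Proposition~\ref{prop:BL} then reads off the four corner entries of $\Lambda'^{-1}$, namely $v_1\tau_2/\lambda$, $v_2\tau_1/\lambda$ and $\pm v_1v_2/\lambda$; these are exactly the coefficients occurring in $D_1,D_2$ of~\eqref{eq:DD_bis} and in the substitution $k_1\to -v_1v_2k_2/\lambda$. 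Carrying out the Gaussian integration with these corners yields the pairing of the statement, the prefactor $\omega$ being produced as in Theorem~\ref{thFormuleSplicing}; specializing $u_i=0,\ v_i=1$ (whence $\lambda=r$, $\tau_1=s$, $\tau_2=p$) recovers the earlier $D_1,D_2$ and change of variable, a useful consistency check.

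The hard part will be the $\theta$-coefficient, i.e.\ evaluating $3\zeta(\Lambda')-\operatorname{tr}(\Lambda')$ for the rational-cornered matrix $\Lambda'$ and packaging it as $\kappa$. Here the preferred-parallel simplification behind Theorem~\ref{th:KM} is no longer available, and the answer is genuinely sensitive to the arithmetic of the bottom row of~\eqref{eq:matrix_f}: I expect to treat separately the generic case $q\neq0$, where an appendix-style signature computation followed by Dedekind reciprocity~\eqref{eq:reciprocity} produces the symbol $S(s/q)$ together with the corrections $3\operatorname{sgn}(q\tau_1)+3\operatorname{sgn}(\lambda\tau_1)$, and the degenerate case $q=0$, which forces $ps=1$ since $\det f=1$ and in which $S(s/q)$ is undefined, so that the signature must be read off directly to give the linear term $s\big(3\operatorname{sgn}(\lambda)-r\big)$. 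Adding to either expression the contributions $-u_1/v_1-u_2/v_2$ coming from the corner part of $-\operatorname{tr}(\Lambda')$ assembles $\kappa$, completing the proof.
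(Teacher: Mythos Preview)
Your proposal is correct and follows essentially the same route as the paper's own proof: rerun the argument of Theorem~\ref{thFormuleSplicing} with the rational-cornered tridiagonal matrix $\Lambda'$, use~\eqref{eq:strut} to pass to $\underline{Z}^\wheel$, read off the corners of $(\Lambda')^{-1}$ from Proposition~\ref{prop:BL} via the associated $2\times2$ matrix you compute, and then evaluate $3\zeta(\Lambda')-\operatorname{tr}(\Lambda')$. For the last step the paper does exactly what you anticipate: it peels off the two rational corners one at a time using the inductive formulas~\eqref{eq:right} and~\eqref{eq:left} of Proposition~\ref{prop:signatures} (this is where the sign corrections $\operatorname{sgn}(\lambda\tau_1)$ and $\operatorname{sgn}(q\tau_1)$ appear), and then applies Theorem~\ref{th:KM} directly to the remaining integer block $\Lambda(a_1,\dots,a_n)$, whose associated $2\times2$ matrix is $\left(\begin{smallmatrix}s&r\\q&p\end{smallmatrix}\right)$ --- no separate appeal to reciprocity~\eqref{eq:reciprocity} is needed at this stage, since it is already absorbed into Theorem~\ref{th:KM}.
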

\begin{proof}
The first part of the proof of Theorem \ref{thFormuleSplicing} works verbatim in the more general situation that we are now considering.
Thus we choose a matrix decomposition as in \eqref{eq:decomposition_f}, which leads to $n\geq 1$ integers $a_1,\dots, a_n$,
and we consider the Hopf chain ${H} := H(a_1,\dots,a_n) \subset M_1 \sharp M_2$ that ``clasps'' positively $K_1$
and $\epsilon_nK_2$. We obtain that
\begin{eqnarray*}
Z(M)&=& \omega\,  \exp\Big(\frac{\theta \big(3\zeta(\Lambda) - \sum_{i=1}^n a_i\big) }{48}\Big)\,  \int  
Z^\wheel(K_1)\,   Z^\wheel(\epsilon_n K_2) \cdot \\
&&   \cdot \exp\Big(\caps{k_1}{h_{1}} + \sum_{i=1}^{n-1} \caps{h_i}{h_{i+1}} + \caps{h_n}{k_2}
+\frac{1}{2} \sum_{i=1}^n a_i \caps{h_i}{h_{i}}  \Big)   \diff k_1 \diff h_1 \cdots \diff h_n \diff k_2.
\end{eqnarray*}
Here $\Lambda$ denotes the linking matrix of $K_1 \sqcup H \sqcup \epsilon_n K_2$ in  $M_1 \sharp M_2$, which is the tridiagonal matrix
$$
\Lambda := \begin{pmatrix} u_1/v_1  & 1  & 0 & \cdots & 0  &0 \\ 1 & a_1 & 1  & \cdots &0 & 0  \\
0 & 1 & a _2 &  \cdots & 0 &0 \\ \vdots & \vdots & \vdots & \ddots & \vdots & \vdots \\ 
 0 & 0 & 0  & \cdots & a_n & 1 \\ 0 & 0 & 0  & \cdots & 1 & u_2/v_2 \end{pmatrix}.
$$
Using \eqref{eq:strut}, we deduce that
\begin{eqnarray*}
Z(M)&=& \omega\,  \exp\Big(\frac{\theta \big(3\zeta(\Lambda) - \operatorname{tr}(\Lambda) \big) }{48}\Big)\,  \int  
\underline{Z}^\wheel(K_1)\,   \underline{Z}^\wheel(\epsilon_n K_2) \cdot \\
&&   \cdot \exp\Big(\caps{k_1}{h_{1}} + \sum_{i=1}^{n-1} \caps{h_i}{h_{i+1}} + \caps{h_n}{k_2}
+\frac{1}{2} \sum_{i=1}^n a_i \caps{h_i}{h_{i}} + \frac{u_1}{2 v_1} \caps{k_1}{k_1} + \frac{u_2}{2 v_2} \caps{k_2}{k_2} \Big) \cdot  \\
&& \cdot \diff k_1 \diff h_1 \cdots \diff h_n \diff k_2
\end{eqnarray*}
The $2\times 2$ matrix associated to the tridiagonal matrix $\Lambda$ is
\begin{eqnarray*}
&& \begin{pmatrix} 0 & -1 \\ 1 & 0 \end{pmatrix} \cdot \begin{pmatrix} u_1/v_1 & -1 \\ 1 & 0 \end{pmatrix}
  \begin{pmatrix} a_1 & -1 \\ 1 & 0 \end{pmatrix} \cdots
  \begin{pmatrix} a_n & -1 \\ 1 & 0 \end{pmatrix} \begin{pmatrix} u_2/v_2 & -1 \\ 1 & 0 \end{pmatrix}\\
&=& \begin{pmatrix} 0 & -1 \\ 1 & 0 \end{pmatrix}  \begin{pmatrix} u_1/v_1 & -1 \\ 1 & 0 \end{pmatrix}
  \begin{pmatrix} 1 & 0 \\ 0 & -1 \end{pmatrix}  \begin{pmatrix} a_1 & 1 \\ -1 & 0 \end{pmatrix} \cdots
  \begin{pmatrix} a_n & 1 \\ -1 & 0 \end{pmatrix} \begin{pmatrix} 1 & 0 \\ 0 & -1 \end{pmatrix}  
  \begin{pmatrix} u_2/v_2 & -1 \\ 1 & 0 \end{pmatrix} \\
  &=& \begin{pmatrix} 0 & -1 \\ 1 & 0 \end{pmatrix}  \begin{pmatrix} u_1/v_1 & 1 \\ 1 & 0 \end{pmatrix}
\Bigg( \begin{pmatrix} a_n & -1 \\ 1 & 0 \end{pmatrix} \cdots   \begin{pmatrix} a_1 & -1 \\ 1 & 0 \end{pmatrix} \Bigg)^T   
 \begin{pmatrix} u_2/v_2 & -1 \\ -1 & 0 \end{pmatrix} \\
 &\stackrel{\eqref{eq:decomposition_f}}{=}&   
 \begin{pmatrix} 0 & -1 \\ 1 & 0 \end{pmatrix}  \begin{pmatrix} u_1/v_1 & 1 \\ 1 & 0 \end{pmatrix}
 \begin{pmatrix} q  & s \\ -p  & -r \end{pmatrix}^T   
 \begin{pmatrix} u_2/v_2 & -1 \\ -1 & 0 \end{pmatrix} \\
 & = &  
 \begin{pmatrix} -(qu_2+pv_2)/{v_2} & q \\  (q u_1u_2 + s u_2 v_1  + pu_1v_2 + rv_1v_2) /(v_1v_2)& -(qu_1+sv_1) /v_1  \end{pmatrix}
\end{eqnarray*}
so that, by Proposition \ref{prop:BL}, we have 
$$
\Lambda^{-1} = \frac{1}{\lambda} \begin{pmatrix}  v_1 \tau_2 & ? & \cdots & ? &  (-1)^{n+1} v_1v_2 \\ ? & ? & \cdots & ? & ? \\
\vdots & \vdots & \ddots & \vdots  & \vdots \\
? & ? & \cdots & ? & ? \\ (-1)^{n+1} v_1v_2 & ? & \cdots & ? & v_2 \tau_1
\end{pmatrix}
$$
where $\lambda := q u_1u_2 + s u_2 v_1  + pu_1v_2 + rv_1v_2$, $\tau_1 :=  qu_1 +sv_1$
and  $\tau_2 :=  qu_2 +pv_2$.
Therefore, performing Gaussian integration and using that $\underline{Z}^\wheel( - K_2)$ 
is obtained from $\underline{Z}^\wheel( K_2)$ by the change $k_2 \to -k_2$, we get
\begin{eqnarray*}
Z(M)&=& \omega\, \exp\Big(\frac{\theta (3\zeta(\Lambda) - \operatorname{tr}(\Lambda)) }{48}\Big) \cdot\\ 
&& \cdot \left\langle \exp\left( -\frac{v_1 \tau_2}{2\lambda} \cups{k_1}{k_1} - \frac{v_1v_2}{\lambda} \cups{k_1}{k_2} 
-\frac{v_2\tau_1}{2\lambda} \cups{k_2}{k_2} \right)
, \underline{Z}^\wheel(K_1)\,  \underline{Z}^\wheel(K_2) \right\rangle_{}.
\end{eqnarray*}
Using the notation \eqref{eq:DD_bis}, this is equivalent to 
$$
 Z(M)= \omega\, \exp\Big(\frac{\theta (3\zeta(\Lambda) - \operatorname{tr}(\Lambda)) }{48}\Big)  \cdot 
 \left\langle \partial_{D_1}\big( \underline{Z}^\wheel(K_1) \big)\big\vert_{k_1\to -v_1v_2 k_2 /\lambda}\,  
 ,\, \partial_{D_2}\big( \underline{Z}^\wheel(K_2) \big) \right\rangle.
$$

It  remains to compute   $\kappa:= 3\zeta(\Lambda) - \operatorname{tr}(\Lambda) \in \Q$. 
By the above computation of the  $2 \times 2$ matrix  $A(u_1/v_1,a_1,\dots,a_n,u_2/v_2)$ 
associated to $\Lambda= \Lambda(u_1/v_1,a_1,\dots,a_n,u_2/v_2)$, 
we deduce from \eqref{eq:right} that 
\begin{eqnarray*}
\zeta(\Lambda)  &=& \zeta(\Lambda(u_1/v_1,a_1,\dots,a_n)) + \operatorname{sgn}(\lambda \tau_1).
\end{eqnarray*}
Besides, since the $2 \times 2$ matrix associated to  $\Lambda(u_1/v_1,a_1,\dots,a_n)$ is
\begin{eqnarray*}
&& A(u_1/v_1,a_1,\dots,a_n,u_2/v_2) \begin{pmatrix} 0 & 1 \\ -1 & u_2/v_2 \end{pmatrix} \\
&=&  \begin{pmatrix} -\tau_2/{v_2} & q \\  \lambda /(v_1v_2)& - \tau_1 /v_1  \end{pmatrix}
\begin{pmatrix} 0 & 1 \\ -1 & u_2/v_2 \end{pmatrix} 
\ = \ \begin{pmatrix} -q& -p \\  \tau_1/v_1 & (pu_1 + rv_1  )/v_1 \end{pmatrix},
\end{eqnarray*}
it follows from \eqref{eq:left} that
$$
\zeta(\Lambda(u_1/v_1,a_1,\dots,a_n)) = \zeta(\Lambda(a_1,\dots,a_n)) + \operatorname{sgn}(q\tau_1).
$$
Finally, since the $2 \times 2$ matrix associated to  $\Lambda(a_1,\dots,a_n)$ is
\begin{eqnarray*}
&& \begin{pmatrix} 0 & -1 \\ 1 & 0 \end{pmatrix} \cdot   \begin{pmatrix} a_1 & -1 \\ 1 & 0 \end{pmatrix} \cdots
  \begin{pmatrix} a_n & -1 \\ 1 & 0 \end{pmatrix} \\
&=& \begin{pmatrix} 0 & -1 \\ 1 & 0 \end{pmatrix} 
  \begin{pmatrix} 1 & 0 \\ 0 & -1 \end{pmatrix}  \begin{pmatrix} a_1 & 1 \\ -1 & 0 \end{pmatrix} \cdots
  \begin{pmatrix} a_n & 1 \\ -1 & 0 \end{pmatrix} \begin{pmatrix} 1 & 0 \\ 0 & -1 \end{pmatrix}   \\
 &\stackrel{\eqref{eq:decomposition_f}}{=}&   \begin{pmatrix} 0 & 1 \\ 1 & 0 \end{pmatrix} 
 \begin{pmatrix} q  & s \\ -p  & -r \end{pmatrix}^T   \begin{pmatrix} 1 & 0 \\ 0 & -1 \end{pmatrix} 
 \ = \  \begin{pmatrix} s  & r \\ q & p \end{pmatrix},
\end{eqnarray*}
we deduce from Theorem \ref{th:KM} that
$$
3 \zeta\big(\Lambda(a_1,\dots,a_n)\big)- \sum_{i=1}^n a_i =
\left\{\begin{array}{ll}  S(s/q) - (s+p)/q & \hbox{if } q \neq 0, \\
-r/s  &  \hbox{if } q =  0. \end{array}\right.
$$
We conclude that
\begin{eqnarray*} 
\kappa &=& \left\{\begin{array}{ll}  S(s/q) - (s+p)/q  +3 \operatorname{sgn}(q\tau_1) +3 \operatorname{sgn}(\lambda\tau_1) -u_1/v_1-u_2/v_2 & \hbox{if } q \neq 0, \\
-r/s   +3 \operatorname{sgn}(\lambda s) -u_1/v_1-u_2/v_2  &  \hbox{if } q =  0. \end{array}\right.
\end{eqnarray*}
\end{proof}

\begin{remark}
At the end of the proof of Theorem \ref{th:non-null}, we have applied \eqref{eq:right} before \eqref{eq:left} to compute $\kappa \in \Q$.
If we went the other way, we would have obtained an equivalent formula:
\begin{eqnarray*} 
\kappa &=& \left\{\begin{array}{ll}  S(s/q) - (s+p)/q  +3 \operatorname{sgn}(q\tau_2) +3 \operatorname{sgn}(\lambda\tau_2) -u_1/v_1-u_2/v_2 & \hbox{if } q \neq 0, \\
p \big(3 \operatorname{sgn}(\lambda) -r\big) -u_1/v_1-u_2/v_2  &  \hbox{if } q =  0. \end{array}\right.
\end{eqnarray*} 
\end{remark}

\begin{remark}
In the same way as Theorem \ref{thFormuleSplicing} extends to Theorem \ref{th:links},
there is a generalization of Theorem \ref{th:non-null} which includes additional links~$L_i$ in $M_i$ (disjoint from $K_i$). 
\end{remark}

\appendix

\section{Signatures of tridiagonal matrices} \label{app:tri}

A \emph{tridiagonal matrix} is a square matrix of the form
$$
\Lambda(c_1, \dots, c_\ell) :=
\begin{pmatrix}
c_1 & 1 & 0 & \ldots & \ldots &0 \\
1 & c_2 & 1 & \ddots & & \vdots \\
0 & 1& c_3 &1  & \ddots & \vdots \\
\vdots & \ddots & \ddots & \ddots & \ddots & 0 \\
\vdots & & \ddots & \ddots & \ddots & 1 \\
0 & \ldots & \ldots & 0 & 1 & c_\ell \\
\end{pmatrix}
$$
where $c_1,\dots,c_\ell\in \mathbb{R}$. The \emph{associated} $2\times 2$ matrix is
$$
\operatorname{A}(c_1,\dots,c_\ell) := 
\begin{pmatrix} 0 & -1 \\ 1 & 0 \end{pmatrix} \cdot \begin{pmatrix} c_1 & -1 \\ 1 & 0 \end{pmatrix}
\begin{pmatrix} c_2 & -1 \\ 1 & 0 \end{pmatrix} \cdots \begin{pmatrix} c_\ell & -1 \\ 1 & 0 \end{pmatrix}
\ \in \operatorname{SL}_2(\mathbb{R}).
$$
\begin{proposition}[Bar-Natan \& Lawrence] \label{prop:BL}
Let $c_1,\dots,c_\ell\in \mathbb{R}$ and write
$$
\begin{pmatrix} \alpha & \beta \\ \gamma & \delta \end{pmatrix} := \operatorname{A}(c_1,\dots,c_\ell).
$$
The matrix  $\Lambda(c_1, \dots, c_\ell)$ is invertible if and only if $\gamma\neq 0$.
In such a case, the four ``corners'' of the inverse of  $\Lambda(c_1, \dots, c_\ell)$ are given by
$$
\Lambda(c_1, \dots, c_\ell)^{-1} = \begin{pmatrix}  - \alpha/\gamma & ? & \cdots & ? & (-1)^{\ell+1}/\gamma \\ ? & ? & \cdots & ? & ? \\
\vdots & \vdots & \ddots & \vdots  & \vdots \\
? & ? & \cdots & ? & ? \\ (-1)^{\ell +1}/\gamma & ? & \cdots & ? & -\delta/\gamma
\end{pmatrix}.
$$
\end{proposition}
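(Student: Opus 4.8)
The plan is to recognize the product of $2\times 2$ matrices defining $\operatorname{A}(c_1,\dots,c_\ell)$ as a bookkeeping device for the determinants of $\Lambda$ and of its principal tridiagonal subblocks, and then to read off the corners of $\Lambda^{-1}$ by the cofactor formula. First I would set $D_\ell:=\det\Lambda(c_1,\dots,c_\ell)$ and, expanding along the last row, record the continuant recursion $D_\ell=c_\ell D_{\ell-1}-D_{\ell-2}$ (with $D_0=1$ and $D_{-1}=0$); the determinants of the ``truncated'' blocks $\Lambda(c_2,\dots,c_\ell)$ satisfy the same three-term recursion, with shifted initial values. Writing $P_\ell:=\prod_{i=1}^{\ell}\left(\begin{smallmatrix}c_i&-1\\1&0\end{smallmatrix}\right)$ and using $P_\ell=P_{\ell-1}\left(\begin{smallmatrix}c_\ell&-1\\1&0\end{smallmatrix}\right)$, a one-line induction shows that every entry of $P_\ell$ obeys this recursion, so that
$$
P_\ell=\begin{pmatrix}\det\Lambda(c_1,\dots,c_\ell)&-\det\Lambda(c_1,\dots,c_{\ell-1})\\ \det\Lambda(c_2,\dots,c_\ell)&-\det\Lambda(c_2,\dots,c_{\ell-1})\end{pmatrix}.
$$

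Left-multiplying by $\left(\begin{smallmatrix}0&-1\\1&0\end{smallmatrix}\right)$ then gives $\gamma=\det\Lambda(c_1,\dots,c_\ell)$, together with $\alpha=-\det\Lambda(c_2,\dots,c_\ell)$, $\delta=-\det\Lambda(c_1,\dots,c_{\ell-1})$ and $\beta=\det\Lambda(c_2,\dots,c_{\ell-1})$. In particular $\Lambda(c_1,\dots,c_\ell)$ is invertible if and only if $\gamma\neq0$, which is the first assertion. For the corners of the inverse I would invoke the cofactor formula $(\Lambda^{-1})_{ij}=(-1)^{i+j}\det(\Lambda^{(j,i)})/\det\Lambda$, where $\Lambda^{(j,i)}$ denotes $\Lambda$ with row $j$ and column $i$ deleted. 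Deleting the first (resp.\ last) row and column leaves exactly the block $\Lambda(c_2,\dots,c_\ell)$ (resp.\ $\Lambda(c_1,\dots,c_{\ell-1})$), so the two diagonal corners are $(\Lambda^{-1})_{11}=\det\Lambda(c_2,\dots,c_\ell)/\gamma=-\alpha/\gamma$ and $(\Lambda^{-1})_{\ell\ell}=\det\Lambda(c_1,\dots,c_{\ell-1})/\gamma=-\delta/\gamma$, matching the claim.

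The only step requiring an idea is the off-diagonal corner. To compute $(\Lambda^{-1})_{1\ell}=(-1)^{1+\ell}\det(\Lambda^{(\ell,1)})/\gamma$, I would reindex the $(\ell-1)\times(\ell-1)$ matrix $\Lambda^{(\ell,1)}$ obtained by deleting row $\ell$ and column $1$: its entry in position $(i,k)$ equals $\Lambda_{i,k+1}$, which vanishes whenever $k>i$ (since $\Lambda$ is tridiagonal) and equals the superdiagonal entry $\Lambda_{i,i+1}=1$ when $k=i$. Hence $\Lambda^{(\ell,1)}$ is lower triangular with unit diagonal, so $\det(\Lambda^{(\ell,1)})=1$ and $(\Lambda^{-1})_{1\ell}=(-1)^{\ell+1}/\gamma$; the symmetric computation (or the symmetry of $\Lambda$) gives $(\Lambda^{-1})_{\ell1}=(-1)^{\ell+1}/\gamma$. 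This triangularity observation is the main, and essentially the only, obstacle; everything else is the routine continuant bookkeeping of the first paragraph.
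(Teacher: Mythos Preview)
Your proof is correct. The paper does not actually supply an argument here: it simply observes that the proof of \cite[Prop.~2.4]{BL} for $c_i\in\Z$ carries over verbatim to $c_i\in\mathbb{R}$ (and notes a sign correction). What you have written is precisely the standard continuant computation underlying that reference --- encoding the principal minors of $\Lambda$ in the entries of the $2\times2$ product $P_\ell$, and then reading off the corners of $\Lambda^{-1}$ via cofactors, with the lower-triangularity of $\Lambda^{(\ell,1)}$ handling the off-diagonal corner --- so your approach coincides with the one the paper points to, only made self-contained.
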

 
\begin{proof}
The  arguments given in \cite[Prop. 2.4]{BL} under the assumption  $c_1,\dots,c_\ell\in \mathbb{Z}$
work in the same way  for arbitrary $c_1,\dots,c_\ell\in \mathbb{R}$.
But it seems that the $(-1)^\ell$ in the statement of \cite[Prop. 2.4]{BL} should be replaced by $(-1)^{\ell+1}$.
\end{proof}

We explain how to compute inductively the signature $\zeta(\Lambda)$ of a tridiagonal matrix $\Lambda$.
\begin{proposition} \label{prop:signatures}
Let $c_1,\dots,c_\ell\in \mathbb{R}$ and write
$$
\begin{pmatrix} \alpha & \beta \\ \gamma & \delta \end{pmatrix} := \operatorname{A}(c_1,\dots,c_\ell).
$$
Then we have the following inductive formulas:
\begin{eqnarray}
\label{eq:right} \zeta\big(\Lambda(c_1,c_2, \dots,c_\ell) \big)&= & \zeta\big(\Lambda(c_1, \dots,c_{\ell-1}) \big) 
-\operatorname{sgn}(\gamma)\, \operatorname{sgn}(\delta),\\
\label{eq:left} \zeta\big(\Lambda(c_1,c_2, \dots,c_\ell) \big)&= & \zeta\big(\Lambda(c_2, \dots,c_\ell) \big) 
-\operatorname{sgn}(\gamma)\, \operatorname{sgn}(\alpha).
\end{eqnarray}
\end{proposition}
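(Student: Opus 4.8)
The plan is to reduce everything to the sequence of leading principal minors $D_k := \det \Lambda(c_1,\dots,c_k)$ (with $D_0 := 1$), which satisfy the continuant recurrence $D_k = c_k D_{k-1} - D_{k-2}$ obtained by expanding along the last row. First I would record the exact relation between $\operatorname{A}(c_1,\dots,c_\ell)$ and these minors. A straightforward induction on the product of $2\times 2$ matrices gives
$$
\begin{pmatrix} c_1 & -1 \\ 1 & 0\end{pmatrix}\cdots \begin{pmatrix} c_\ell & -1 \\ 1 & 0\end{pmatrix}
= \begin{pmatrix} D_\ell & -D_{\ell-1} \\ \det\Lambda(c_2,\dots,c_\ell) & -\det\Lambda(c_2,\dots,c_{\ell-1}) \end{pmatrix},
$$
so that after multiplying on the left by $\left(\begin{smallmatrix} 0 & -1 \\ 1 & 0\end{smallmatrix}\right)$ one reads off $\gamma = D_\ell$, $\delta = -D_{\ell-1}$ and $\alpha = -\det\Lambda(c_2,\dots,c_\ell)$. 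This is the computation already underlying Proposition \ref{prop:BL}.

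Next I would prove the single ``bordering'' step: passing from $\Lambda(c_1,\dots,c_{\ell-1})$ to $\Lambda(c_1,\dots,c_\ell)$ changes the signature by $\operatorname{sgn}(D_{\ell-1}D_\ell)$. Cauchy interlacing for a symmetric matrix bordered by one row and column gives $\lvert \zeta(\Lambda(c_1,\dots,c_\ell)) - \zeta(\Lambda(c_1,\dots,c_{\ell-1}))\rvert \le 1$. When $D_{\ell-1}, D_\ell \neq 0$, neither matrix is singular, so the two signatures have opposite parities (the signature of a nonsingular $n\times n$ symmetric matrix is congruent to $n$ modulo $2$) and their difference is therefore exactly $\pm 1$; comparing $\operatorname{sgn}(D_k) = (-1)^{n_-(k)}$ for $k=\ell-1,\ell$, where $n_-(k)$ denotes the number of negative eigenvalues of $\Lambda(c_1,\dots,c_k)$ (the sign of a product of nonzero eigenvalues), fixes the sign as $\operatorname{sgn}(D_{\ell-1}D_\ell)$. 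Substituting $\gamma = D_\ell$ and $\delta = -D_{\ell-1}$ turns this into $-\operatorname{sgn}(\gamma)\operatorname{sgn}(\delta)$, which is precisely \eqref{eq:right}.

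For \eqref{eq:left} I would invoke the reversal symmetry: conjugating by the antidiagonal permutation matrix sends $\Lambda(c_1,\dots,c_\ell)$ to $\Lambda(c_\ell,\dots,c_1)$ and preserves both the signature and all the minors. Applying \eqref{eq:right} to the reversed tuple $(c_\ell,\dots,c_1)$ relates $\zeta(\Lambda(c_1,\dots,c_\ell))$ to $\zeta(\Lambda(c_2,\dots,c_\ell))$, with correction term $-\operatorname{sgn}(\tilde\gamma)\operatorname{sgn}(\tilde\delta)$, where by the first paragraph $\tilde\gamma = \det\Lambda(c_\ell,\dots,c_1) = \gamma$ and $\tilde\delta = -\det\Lambda(c_\ell,\dots,c_2) = -\det\Lambda(c_2,\dots,c_\ell) = \alpha$. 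This gives $-\operatorname{sgn}(\gamma)\operatorname{sgn}(\alpha)$, which is \eqref{eq:left}.

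The main obstacle is the degenerate case where a minor vanishes, i.e. where the convention $\operatorname{sgn}(0)=0$ must be read as ``no change of signature''. The key structural fact, immediate from $D_k = c_k D_{k-1} - D_{k-2}$ together with $D_0 = 1$, is that two consecutive minors $D_{\ell-1}, D_\ell$ can never vanish simultaneously; hence exactly one of the two bordered matrices is singular, and its zero eigenvalue is simple because an unreduced (unit off-diagonal) tridiagonal matrix has simple spectrum. I would then replace the parity argument above by the \emph{strict} form of Cauchy interlacing, which forces the numbers of strictly positive and strictly negative eigenvalues to agree for $\Lambda(c_1,\dots,c_{\ell-1})$ and $\Lambda(c_1,\dots,c_\ell)$ in this situation, so the signature is unchanged, in agreement with $\operatorname{sgn}(D_{\ell-1}D_\ell) = 0$. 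A short eigenvalue count disposing of the two sub-cases ($D_\ell = 0$ and $D_{\ell-1} = 0$) then completes the proof of both formulas.
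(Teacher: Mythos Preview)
Your argument is correct and takes a genuinely different route from the paper's proof. The paper proves \eqref{eq:left} directly (stating that \eqref{eq:right} is analogous) by an explicit Schur-complement congruence: writing $\Lambda$ as a bordering of $\underline{\Lambda}=\Lambda(c_2,\dots,c_\ell)$, it shows that when $\alpha\neq 0$ the matrix $\Lambda$ is congruent to $\underline{\Lambda}\oplus(c_1-v\underline{\Lambda}^{-1}v^T)$, and then reads off $v\underline{\Lambda}^{-1}v^T=c_1+\gamma/\alpha$ from Proposition~\ref{prop:BL}; the degenerate case $\alpha=0$ is handled by producing a null vector of $\underline{\Lambda}$ with nonzero first entry and changing basis to obtain two further congruences showing $\zeta(\Lambda)=\zeta(\underline{\Lambda})$. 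Your approach is spectral rather than algebraic: you identify the entries of $A(c_1,\dots,c_\ell)$ with principal minors via the continuant recurrence, and then obtain the signature increment from Cauchy interlacing combined with the parity of the number of negative eigenvalues read off from $\operatorname{sgn}(D_k)$. Your use of the reversal symmetry $\Lambda(c_1,\dots,c_\ell)\cong\Lambda(c_\ell,\dots,c_1)$ to deduce \eqref{eq:left} from \eqref{eq:right} is a pleasant shortcut that the paper does not exploit. In the degenerate case you invoke the simplicity of the spectrum of an unreduced tridiagonal matrix and strict interlacing; this is valid (the last entry of any eigenvector of $\Lambda(c_1,\dots,c_\ell)$ is nonzero, which forces strict inequalities), but you should state this fact explicitly rather than leave it implicit. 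The paper's congruence argument is more self-contained and ties in naturally with Proposition~\ref{prop:BL}; your argument is shorter once one is willing to quote these standard facts about tridiagonal spectra.
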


\begin{proof}
The proof of \eqref{eq:right} being  very similar to the proof of \eqref{eq:left}, we give the latter and omit the former.
A straightforward computation gives
\begin{equation} \label{eq:A}
A(c_2,\dots,c_\ell) = \begin{pmatrix} \gamma + \alpha c_1 &  \delta + \beta c_1 \\ -\alpha & -\beta \end{pmatrix}.
\end{equation}
Thus, by Proposition \ref{prop:BL}, the matrix $\underline{\Lambda} := \Lambda(c_2, \dots,c_\ell)$ is invertible if and only if $\alpha\neq 0$.

Assume first that $\alpha\neq 0$.  
We shall use the fact that ${\Lambda} := \Lambda(c_1, c_2,\dots,c_\ell)$ is obtained from~$\underline{\Lambda}$
by a kind of ``plumbing'' operation --- compare with the proof of \cite[Prop$.$ 2.5.3]{GR}.
Specifically, we have the following congruence where  $v:=(1,0,\dots,0)\in \mathbb{R}^{\ell-1}$:
$$
\underbrace{\begin{pmatrix} c_1  & v \\ v^T & \underline{\Lambda} \end{pmatrix}}_{\Lambda}
= \begin{pmatrix} 1 & v \underline{\Lambda}^{-1} \\ 0  & I \\ \end{pmatrix}  
\begin{pmatrix} c_1 - v  \underline{\Lambda}^{-1} v^T & 0 \\ 0 & \underline{\Lambda} \end{pmatrix}
\begin{pmatrix} 1 & 0 \\ \underline{\Lambda}^{-1} v^T & I \\ \end{pmatrix}.
$$
Therefore 
$$
\zeta(\Lambda) = \operatorname{sgn}(c_1 - v  \underline{\Lambda}^{-1} v^T) + \zeta(\underline{\Lambda} ).
$$
By Proposition \ref{prop:BL}, the upper left ``corner'' of $\underline{\Lambda}^{-1}$ can be read from \eqref{eq:A}.
Hence we get $v  \underline{\Lambda}^{-1} v^T = \gamma/\alpha +c_1 $ and we deduce that 
$\zeta(\Lambda) = - \operatorname{sgn}(\gamma/\alpha) + \zeta(\underline{\Lambda} )$.

Assume now that $\alpha=0$. Let $x=(x_2,\dots,x_\ell) \in \mathbb{R}^{\ell-1}$ be such that $x\neq 0$ and $\underline{\Lambda} x^T=0$.
By writing the linear system $\underline{\Lambda} x^T=0$,  we see that $x_2=0$ would successively imply $x_3=0$, \dots , $x_\ell=0$. Hence we must  have $x_2 \neq 0$ and we get a new basis of $\mathbb{R}^\ell$ by  substituting $x\in \{0\} \oplus \mathbb{R}^{\ell-1} \subset\mathbb{R}^\ell$ 
to the second vector of the canonical basis: 
we shall express  in this new basis the symmetric bilinear form in $\mathbb{R}^\ell$ that is given by $\Lambda$ in the canonical basis.
Specifically, we have the following  congruence  where  $\underline{x}:= (x_3,\dots,x_{\ell})$,
$w :=(1,0,\dots,0)\in \mathbb{R}^{\ell-2}$ and  $\underline{\underline{\Lambda}}:= \Lambda(c_3,\dots,c_\ell)$:
\begin{equation} \label{eq:congruence1}
\begin{pmatrix}
 0 & 0 \\
 0 & \underline{\underline{\Lambda}}
\end{pmatrix} 
= \underbrace{\begin{pmatrix}   x_2 & \underline{x} \\  0& I  \end{pmatrix}}_{X^T}
\underbrace{\begin{pmatrix}   c_2  &  w \\  w^T & \underline{\underline{\Lambda}} \end{pmatrix}}_{\underline{\Lambda}}
\underbrace{\begin{pmatrix}  x_2 & 0 \\  \underline{x}^T & I  \end{pmatrix}}_{X}
\end{equation}
Therefore, we also have the following congruence:
\begin{equation} \label{eq:congruence2}
\begin{pmatrix}
c_1 & x_2 & 0 \\
x_2 & 0 & 0 \\
0 & 0 & \underline{\underline{\Lambda}}
\end{pmatrix} 
= \begin{pmatrix} 1 & 0 \\ 0  & X^T  \end{pmatrix} 
\underbrace{\begin{pmatrix} c_1  & v \\ v^T & \underline{\Lambda} \end{pmatrix}}_{\Lambda} 
\begin{pmatrix} 1 & 0 \\ 0 & X  \end{pmatrix}
\end{equation}
We deduce from \eqref{eq:congruence1} that $\zeta (\underline{\underline{\Lambda}}) = \zeta({\underline{\Lambda}})$
and we deduce from \eqref{eq:congruence2} that  $\zeta (\underline{\underline{\Lambda}}) = \zeta({{\Lambda}})$.
\end{proof}

We now recall how signatures of tridiagonal matrices with \emph{integral} coefficients 
can be computed  from Dedekind sums. 
For any pair of coprime integers $p,q$ with $q\neq 0$, define the \emph{Dedekind sum} $\mathfrak{s}(p,q)$ by
$$
\mathfrak{s}(p,q) := \sum_{k=1}^{\vert q\vert -1} \left(\!\!\left(\,\frac{k}{q}\, \right)\!\!\right)  
\cdot \left(\!\!\left(\,\frac{kp}{q}\, \right)\!\!\right) 
$$
where $(\!(-)\!)$ denotes the \emph{sawtooth function}  defined by
$(\!(x)\!) := x- \lfloor x \rfloor -1/2$  for {$x\in \mathbb{R}\setminus\Z$} and $(\!(x)\!):=0$ for $x\in \Z$.
Then the \emph{Dedekind symbol} 
$$
S(p/q):= 12 \operatorname{sgn}(q)\,  \mathfrak{s}(p,q)
$$
is easily seen to satisfy $S(-p/q) = -S(p/q)$ and $S(p/q+1)= S(p/q)$.
Furthermore, we have 
\begin{equation} \label{eq:reciprocity}
S(p/q) + S(q/p) = p/q +q/p +1/pq -3 \operatorname{sgn} (pq);
\end{equation}
see \cite{KM} and references therein.

The following theorem is proved in \cite[(1.12) \& (2.2)]{KM} --- see also \cite{BG}.
For the sake of completeness, we provide  a proof   based on  Proposition \ref{prop:signatures}
and assuming the reciprocity law~\eqref{eq:reciprocity}.

\begin{theorem}[Kirby \& Melvin] \label{th:KM}
Let $c_1,\dots,c_\ell\in \Z$ and write
$$
\begin{pmatrix} \alpha & \beta \\ \gamma & \delta \end{pmatrix} := \operatorname{A}(c_1,\dots,c_\ell).
$$
The signature $\zeta(\Lambda)$ and the trace $\operatorname{tr}(\Lambda)$ 
of $\Lambda:= \Lambda(c_1,\dots,c_\ell)$ are related as follows:
\begin{equation} \label{eq:dd}
3 \zeta(\Lambda) - \operatorname{tr}(\Lambda)  
= \left\{\begin{array}{ll}  S(\alpha/\gamma) - (\alpha+\delta)/\gamma & \hbox{if } \gamma \neq 0, \\
-\beta/\alpha   &  \hbox{if } \gamma =  0.
\end{array}\right.
\end{equation}
\end{theorem}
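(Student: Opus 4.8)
The plan is to prove \eqref{eq:dd} by induction on the length $\ell$, removing the first coefficient $c_1$ and combining the ``left'' recursion \eqref{eq:left} with the reciprocity law \eqref{eq:reciprocity}. Throughout, write $\Phi(\Lambda):=3\zeta(\Lambda)-\operatorname{tr}(\Lambda)$ and let $F(\alpha,\beta,\gamma,\delta)$ denote the right-hand side of \eqref{eq:dd}. Since $\det \operatorname{A}(c_1,\dots,c_\ell)=1$, we have $\alpha\delta-\beta\gamma=1$; in particular $\gcd(\alpha,\gamma)=1$, so that $S(\alpha/\gamma)$ is well defined whenever $\gamma\neq0$. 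The base case $\ell=0$ is immediate: then $\operatorname{A}()=\begin{pmatrix}0&-1\\1&0\end{pmatrix}$ and $\Lambda$ is the empty matrix, so both $\Phi(\Lambda)$ and $F(0,-1,1,0)=S(0)$ vanish.

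For the inductive step, write $\operatorname{A}(c_2,\dots,c_\ell)=\begin{pmatrix}\alpha''&\beta''\\\gamma''&\delta''\end{pmatrix}$; by \eqref{eq:A} its entries satisfy $\alpha=-\gamma''$, $\beta=-\delta''$, $\gamma=\alpha''+c_1\gamma''$ and $\delta=\beta''+c_1\delta''$. Since $\operatorname{tr}(\Lambda(c_1,\dots,c_\ell))=\operatorname{tr}(\Lambda(c_2,\dots,c_\ell))+c_1$, the recursion \eqref{eq:left} shows that the claim $\Phi(\Lambda)=F(\alpha,\beta,\gamma,\delta)$ is equivalent to
\[
F(\alpha,\beta,\gamma,\delta)=F(\alpha'',\beta'',\gamma'',\delta'')-3\operatorname{sgn}(\gamma)\operatorname{sgn}(\alpha)-c_1 .
\]

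I would first treat the generic case $\gamma\neq0$ and $\alpha\neq0$ (equivalently $\gamma''\neq0$), which is the heart of the matter. Here reciprocity \eqref{eq:reciprocity} applies to the pair $(\alpha,\gamma)$, and since $\gamma/\alpha=-\alpha''/\gamma''-c_1$, the periodicity and oddness of the Dedekind symbol give $S(\gamma/\alpha)=-S(\alpha''/\gamma'')$. Feeding this into \eqref{eq:reciprocity} expresses $S(\alpha/\gamma)$ as $S(\alpha''/\gamma'')$ plus an explicit rational correction. Substituting the relations above and simplifying (using $\alpha/\gamma-(\alpha+\delta)/\gamma=-\delta/\gamma$), the displayed identity reduces, after the terms $S(\alpha''/\gamma'')$ and $-3\operatorname{sgn}(\alpha\gamma)$ cancel from both sides, to the purely algebraic equality $\delta\gamma''+1=\delta''\gamma$. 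This is immediate from $\det \operatorname{A}(c_2,\dots,c_\ell)=\alpha''\delta''-\beta''\gamma''=1$ together with $\gamma=\alpha''+c_1\gamma''$ and $\delta=\beta''+c_1\delta''$, so the generic case is settled with a single use of reciprocity.

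It remains to handle the degenerate cases $\alpha=0$ (i.e.\ $\gamma''=0$) and $\gamma=0$, in which $F$ passes to its $-\beta/\alpha$ branch and reciprocity is no longer directly available. In each of these the determinant relation forces the relevant entries to be $\pm1$ (for instance $\alpha=0$ forces $\gamma=\pm1$), and one simply evaluates both sides of the displayed identity directly, using the degenerate form of \eqref{eq:left} in which $\operatorname{sgn}(0)=0$. I expect this bookkeeping --- matching the correct branch of \eqref{eq:dd} against the correct degenerate form of \eqref{eq:left} --- to be the main nuisance of the argument, even though each individual computation is elementary and relies only on $\det=1$. The generic step, by contrast, is a clean combination of reciprocity with one $\det=1$ identity.
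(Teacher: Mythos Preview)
Your proposal is correct and follows essentially the same route as the paper's own proof: induction on $\ell$ via the left recursion \eqref{eq:left}, reciprocity \eqref{eq:reciprocity} in the generic case $\alpha\neq 0,\ \gamma\neq 0$, and direct verification in the degenerate cases. The only cosmetic differences are that you start the induction at $\ell=0$ (empty matrix) rather than $\ell=1$, and you leave the degenerate-case bookkeeping as an outline while the paper carries it out explicitly; both are harmless.
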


\begin{proof}
The proof is by induction on $\ell \geq 1$. For $\ell=1$, \eqref{eq:dd}  asserts that
$$
3 \operatorname{sgn}(c_1)- c_1= \left\{\begin{array}{ll}  -S(1/c_1) +2/c_1 & \hbox{if } c_1 \neq 0, \\
0  &  \hbox{if } c_1 =  0, \end{array}\right.
$$
which is trivial for $c_1=0$ and follows from \eqref{eq:reciprocity} for $c_1\neq 0$.
Hence we  suppose that \eqref{eq:dd} holds true for $\underline{\Lambda} := \Lambda(c_2, \dots,c_\ell)$; the $2 \times 2$ matrix associated to $\underline{\Lambda}$ has been computed at \eqref{eq:A}.

Assume that $\gamma=0$. It follows from \eqref{eq:left} that
\begin{eqnarray*}
3\zeta(\Lambda) - \operatorname{tr}(\Lambda) &=& 3\zeta(\underline{\Lambda}) - \operatorname{tr}(\underline{\Lambda}) -c_1.
\end{eqnarray*}
Since $\alpha \delta-\beta\gamma=1$, we must have $\alpha\neq 0$.
Then the induction hypothesis gives 
\begin{eqnarray*}
3\zeta(\Lambda) - \operatorname{tr}(\Lambda)
&=& S(-\gamma/\alpha-c_1) - (\gamma+ \alpha c_1 -\beta)/(-\alpha) -c_1 \ = \ -\beta/\alpha.
\end{eqnarray*}
Assume now that $\gamma \neq 0$. If $\alpha=0$, 
we have $\beta=-\gamma \in \{-1,+1\}$ and it follows from \eqref{eq:left} and the induction hypothesis that
\begin{eqnarray*}
3\zeta(\Lambda) - \operatorname{tr}(\Lambda) &=& 3\zeta(\underline{\Lambda}) - \operatorname{tr}(\underline{\Lambda}) -c_1\\
&=& -(\delta+\beta c_1)/ (\gamma + \alpha c_1 ) -c_1  
\ = \  -\delta/\gamma \ = \ S(\alpha/\gamma) - (\alpha+\delta)/\gamma .
\end{eqnarray*}
If $\alpha \neq 0$,  it follows from \eqref{eq:left} and the induction hypothesis that
\begin{eqnarray*}
3\zeta(\Lambda) - \operatorname{tr}(\Lambda) &=& 3\zeta(\underline{\Lambda}) 
- \operatorname{tr}(\underline{\Lambda}) -3  \operatorname{sgn}(\gamma \alpha) -c_1 \\
&=&  S((\gamma+c_1 \alpha)/(-\alpha)) -(\gamma+c_1 \alpha-\beta)/ (-\alpha)  -3  \operatorname{sgn}(\gamma \alpha) -c_1 \\
&=& -S(\gamma/\alpha) + \gamma/\alpha -\beta/\alpha -3  \operatorname{sgn}(\gamma \alpha) \\
&\stackrel{\eqref{eq:reciprocity}}{=}& S(\alpha/\gamma)- \alpha/\gamma -1/(\gamma\alpha) -\beta/\alpha 
\ = \ S(\alpha/\gamma)- \alpha/\gamma -\delta/\gamma.
\end{eqnarray*}

\vspace{-0.5cm}
\end{proof}

\def\cprime{$'$}
\providecommand{\bysame}{\leavevmode ---\ }
\providecommand{\og}{``}
\providecommand{\fg}{''}
\providecommand{\smfandname}{\&}
\providecommand{\smfedsname}{\'eds.}
\providecommand{\smfedname}{\'ed.}
\providecommand{\smfmastersthesisname}{M\'emoire}
\providecommand{\smfphdthesisname}{Th\`ese}

\end{document}